\pgfplotsset{compat=1.15}
\newcommand*{\mailto}[1]{\href{mailto:#1}{\nolinkurl{#1}}}
\newcommand{\arxiv}[1]{\href{http://arxiv.org/abs/#1}{arXiv:#1}}
\newcommand{\R}{{\mathbb R}}
\newcommand{\N}{{\mathbb N}}
\newcommand{\Z}{{\mathbb Z}}
\newcommand{\C}{{\mathbb C}}
\newcommand{\bbC}{{\mathbb{C}}}
\newcommand{\bbN}{{\mathbb{N}}}
\newcommand{\bbR}{{\mathbb{R}}}
\newcommand{\bbZ}{{\mathbb{Z}}}
\newcommand{\cB}{{\mathcal B}}
\newcommand{\cH}{{\mathcal H}}
\newcommand{\cK}{{\mathcal K}}
\newcommand{\beq}{\begin{align}}
\newcommand{\enq}{\end{align}}
\renewcommand{\a}{\alpha}
\renewcommand{\r}{\rho}
\newcommand{\x}{\xi}
\newcommand{\z}{\zeta}
\DeclareMathOperator{\Ai}{Ai}
\newcommand{\Arg}{\text{\rm Arg}}
\newcommand{\Res}{\text{\rm Res}}
\renewcommand{\Re}{\text{\rm Re}}
\renewcommand{\Im}{\text{\rm Im}}
\renewcommand{\ln}{\text{\rm ln}}
\newcommand{\Norm}[1]{\left\lVert#1\right\rVert}
\newcommand{\lb}{\label}
\newcommand{\dott}{\,\cdot\,}
\newcommand{\bi}{\bibitem}
\let\geq\geqslant
\let\leq\leqslant
\def\theequation{\@arabic\c@equation}
\numberwithin{equation}{section}
\newtheorem{theorem}{Theorem}[section]
\newtheorem{lemma}[theorem]{Lemma}
\newtheorem{corollary}[theorem]{Corollary}
\newtheorem{definition}[theorem]{Definition}
\newtheorem{assumption}[theorem]{Assumption}
\theoremstyle{remark}
\newenvironment{remark}[1][]{\refstepcounter{theorem}\par\medskip\noindent\textit{Remark~$\theexample. #1$} \rmfamily}{{\ }\hfill $\diamond$ \vspace{6pt}}
\begin{document}

\title{$\zeta$-functions via contour integrals and universal sum rules} 

\author[G.\ Fucci]{Guglielmo Fucci}
\address{Department of Mathematics, 
East Carolina University, 331 Austin Building, East Fifth St.,
Greenville, NC 27858-4353, USA}
\email{\mailto{fuccig@ecu.edu}}
\urladdr{\url{http://myweb.ecu.edu/fuccig/}}

\author[M. Piorkowski]{Mateusz Piorkowski}
\address{Department of Mathematics, KTH Royal Institute of Technology, Stockholm, Sweden}
\email{\mailto{mateuszp@kth.se}}
\urladdr{\url{https://sites.google.com/view/mateuszpiorkowski/home?pli=1}}

\author[J.\ Stanfill]{Jonathan Stanfill}
\address{Division of Geodetic Science, School of Earth Sciences, The Ohio State University \\
275 Mendenhall Laboratory, 125 South Oval Mall, Columbus, OH 43210, USA}
\email{\mailto{stanfill.13@osu.edu}}
\urladdr{\url{https://u.osu.edu/stanfill-13/}}


\date{\today}
\subjclass{Primary: 11M41, 30E20, 33C10, 33C15. Secondary: 11M06, 47B10, 11M36.}
\keywords{$\zeta$-function, sum rules, special function zeros, (modified) Fredholm determinants, zeta regularized functional determinants.}

\begin{abstract}
This work develops an analytic framework for the study of the $\zeta$-function associated with general sequences of complex numbers. We show that a contour integral representation, commonly used when studying spectral $\zeta$-functions associated with self-adjoint differential operators, can be extended far beyond its traditional setting. In contrast to representations utilizing integrals of $\theta$-functions, our method applies to arbitrary sequences of complex numbers with minimal assumptions. This leads to a set of universal identities, including sum rules and meromorphic properties, that hold across a broad class of $\zeta$-functions. Additionally, we discuss the connection to regularized (modified) Fredholm determinants of $p$-Schatten--von Neumann class operators. We illustrate the versatility of this representation by computing special values and residues of the $\zeta$-function for a variety of sequences of complex numbers, in particular, the zeros of Airy functions, parabolic cylinder functions, and confluent hypergeometric functions. Furthermore, we employ the adaptive Antoulas--Anderson (AAA) algorithm for rational interpolation in the study of the Airy $\zeta$-function.
\end{abstract}

\maketitle



\section{Introduction} \lb{s1}

Studying the properties of $\zeta$-functions associated with sequences of complex numbers has proven immensely important in many areas of mathematics and physics, see for example \cite{Bar92,Be86,BVW88,BKMM09,BCEMZ03,Cog06,CZ97,CZ04,DK78,Em12,Em94,FLP,Ki02,Ki08,RS71,Sp06,voros23,Za94}.
The aim of this work is to illustrate that the contour integral representation is a very useful and versatile tool for analyzing the properties of the $\zeta$-function since it can be utilized even when other representations are not valid or do not readily provide the desired information (for an instance in which this occurs see Remark \ref{sub1.1}). This makes the contour integral representation arguably the most far-reaching of any such representations and, in the course of this work, we will show that it can be utilized to quickly and efficiently calculate not only special values of the $\z$-function but also residues at its poles. 

The contour integral representation has been utilized numerous times in the literature to study the $\z$-function associated with sequences of real numbers that are bounded from below.  
For instance, it has been used to great effect in the analysis of the spectral $\zeta$-functions associated with certain  self-adjoint operators bounded from below \cite{FGKS21,FPS25,FS25,GK19,Ki02,KM03}. 
Since the dynamics of the vast majority of physical systems is described by self-adjoint differential operators, almost all of the available literature on $\z$-functions and their analytic continuation is limited to the case of sequences of real numbers that are bounded from below. In particular, the process of analytic continuation has been described several times in this setting and consists of two main steps: first, the contour is suitably deformed in order to obtain a representation valid in a strip of the complex plane, and, second, the ensuing representation is analytically continued to the left by subtracting and then adding the asymptotic expansion of the integrand (see Section \ref{continuation} for the details of this procedure).

The main result of this paper is a proof that the integral representation of the $\z$-function and the method of analytic continuation that has been developed for the case of real sequences bounded from below can be extended, with few modifications, to the general case of sequences of complex numbers satisfying only minimal conditions (see Definition \ref{defzeta}). This integral representation immediately implies universal sum rules that surprisingly hold for all such $\zeta$-functions (see Theorem \ref{t4}). These universal sum rules are the complete generalization of the sum rules satisfied by sequences of numbers such as the inverse squares of Bessel function zeros (see, e.g., \cite{AB96}, \cite[Sec.~7.9]{EMOT53II}, \cite{Ra74,Sn60}) and Airy function zeros \cite{voros}.  This illustrates the amazing result that such sum rules are, in fact, ubiquitous; we write them out explicitly in the Airy case studied in Sec.~\ref{Sect:Airy_Zeros}. We further discuss in Remark \ref{r2.10} $(iii)$ so-called \emph{exact} sum rules studied in the mathematical physics literature \cite{Ka25,voros23}. 

Another interesting phenomenon that we encounter are $\zeta$-functions that can be analytically continued to be \emph{entire} functions (see Sec.~\ref{Sect:Zeros_Para}, \ref{Sect:Zeros_Conf}). This is in stark contrast to the case of sequences of real numbers bounded from below, for which the $\zeta$-function always develops at least one real singularity on the boundary of the region of convergence (e.g., the simple pole of the Riemann $\zeta$-function at $s = 1$). 

To the best of our knowledge, the systematic extension of the contour integral methods previously employed to study the $\z$-function of real sequences bounded from below  to more general sequences of complex numbers is presented here for the first time. As a result, these methods can be utilized for $\zeta$-functions associated with the spectrum of more general operators appearing in physical applications, including those unbounded above and below and those that are non-self-adjoint such as sectorial operators. Interesting examples include Dirac operators, Jacobi operators, Sturm--Liouville operators with sign-changing or complex coefficients, and higher order differential operators. It is very important to mention that the general methods developed in this work can be applied to the field of perturbations of classical gravitational backgrounds involving black holes or branes \cite{bert09}. The analysis of these phenomena naturally leads to quasinormal modes which are linked to the energy of a dissipative system. 
Quasinormal modes, in the ambit of black holes, arise naturally when trying to solve the non-hermitian eigenvalue problem of a differential operator describing the linear perturbations of a fixed gravitational background \cite{bert09}. In the context of semiclassical quantization of black holes, functional determinants play an important role and they have been described in terms of products of suitable quasinormal modes (see, e.g., \cite{dene10}). The results obtained in this work, especially those presented in Section \ref{continuation}, can provide perhaps a simpler and more direct way of computing the functional determinant of quasinormal modes by using the derivative of the corresponding $\z$-function at $s=0$. Until now this approach would not have been possible, but our work shows that the ordinary analytic continuation techniques can be extended to $\z$-functions of such complex sequences.   

It is important to remind the reader that the study of the analytic continuation of the $\zeta$-function is of utmost importance since many values of interest in physical applications (i.e., spectral $\zeta$-functions) lie to the left of the convergence region of the sum definition. Such values of interest appear in zeta regularization techniques \cite{Em94,FP17,Ha77}, the $\zeta$-regularized functional determinant $\exp(-\zeta'(0))$ \cite{FGKS21,FPS25,FS25,GK19,Har22,KM03,KM04}, heat kernel coefficients as residues of poles \cite{BKD96,Cog06,Ki06}, and the Casimir energy via the value at $-1/2$ \cite{BVW88,BKMM09,Dwk84}. Our work now allows to analyze these values of the $\z$-functions associated with a larger number of physical systems not previously treatable (such as dissipative systems).  

In addition to these values, the so-called special values of the $\zeta$-function (i.e., at integer values to the left of the original region of convergence) are of general mathematical interest. Our methods not only allow one to efficiently detect the presence of singularities in the analytic continuation, but also yield precise formulas for their residues in case of poles and their values in case of regular points. Moreover, the methods immediately extend beyond these assumptions allowing general cases to be addressed similarly (see, e.g., Remark \ref{rem3.2} and \cite{FS25}).

As a final comment, we would like to mention that the integral representation and its method of analytic continuation outlined here allowed us in Section \ref{Sect:Airy_Zeros} to consider, as an example, the Airy $\z$-function. This function has been studied before in the literature \cite{Cr96,voros23} but only its values at the positive integers $n \geq 2$ were known. With very little effort we were able not only to recover those previously known values, but also to compute the values at all integer $n \leq 1$, including the value of $\zeta_{\Ai}(1)$ confirming 
a conjecture due to Crandall \cite{Cr96}. It turns out that at all negative integers not multiples of 3 the Airy $\z$-function vanishes, showing that they are its trivial zeros, a previously unknown fact. We also illustrate how the adaptive Antoulas--Anderson (AAA) algorithm for rational interpolation introduced in \cite{AAA} can be employed in the study of the Airy $\zeta$-function.   

\subsection{Organization of paper} Section \ref{s2} begins with Definition \ref{defzeta}, which introduces the general sequences of complex numbers to which we will associate a $\zeta$-function. We then introduce an entire \textit{characteristic function} (see \eqref{c1} and Remark \ref{rem2.3}) which encodes the sequence as its zeros. From there, we will prove that this function satisfies the necessary properties that allow us to define a contour integral representation of the $\zeta$-function, see \eqref{2.17}. As an immediate application, we illustrate a general deformation process while discussing its implications such as  universal sum rules for admissible positive integer values, then connect this representation to regularized Fredholm determinants.

The remainder of the paper illustrates how these results enable one to analyze the structure of the analytic continuation of the $\zeta$-function. In particular, in Sections \ref{continuation} and \ref{s4}, we find exact formulas for special values and residues of the $\zeta$-function, as well as the associated $\zeta$-regularized determinant, through the asymptotic behavior of the characteristic function for large argument. These general results are then applied to multiple examples of interest in Section \ref{s5}, starting with the classic Riemann and Hurwitz $\zeta$-functions, before transitioning to $\zeta$-functions associated with zeros of Airy functions, parabolic cylinder functions, and confluent hypergeometric functions. Note that such zeros often appear in applications as the spectrum of certain differential operators.

\section{The \texorpdfstring{$\zeta$}{zeta}-function and its universal properties}\label{s2}

The general theory of $\z$-functions of sequences provides the following necessary conditions $(i)$, $(ii)$ below, under which \eqref{introz} is well-defined \cite{Sp06}.  We additionally impose condition $(iii)$, as it will allow us to obtain convenient integral representations for the $\z$-function.
\begin{definition}\label{defzeta}
    Given a sequence of complex numbers, $S=\{a_n\}_{n\in\N}$, such that 
    \begin{enumerate}[label=$(\roman*)$]
    \item $0<|a_1|\leq|a_2|\leq \dots \to\infty$,
    \item the exponent of convergence $\a=\limsup_{n\to\infty}(\ln \, n)/(\ln|a_n|)$ is finite,
    \item  there exists $\varepsilon>0$ and $\Psi\in[-\pi,\pi)$ such that all $a_n\in \mathcal{I}_\varepsilon$, where 
    \begin{align}\label{I_eps}
        \mathcal{I}_\varepsilon=\left\{z\in\C : \Arg(z) \not \in (\Psi-\varepsilon, \Psi+\varepsilon)\right\}.
    \end{align}
    \end{enumerate}
    Then we can define its associated $\zeta$-function as
\begin{align}\label{introz}
    \zeta_{S}(s)=\sum_{n=1}^{\infty}a_{n}^{-s},\quad \Re(s)>\alpha,
\end{align}
which converges uniformly and absolutely in its domain of definition. 
\end{definition}

Note that the $a_n^{-s}$ are entire functions in $s$, however we still need to choose a branch cut in the definition of the mapping $a \mapsto a^{-s}$ for $s \in \C \setminus \Z$. Due to $(iii)$ in Definition \ref{defzeta}, it is possible to choose the branch cut to lie on $R_\Psi=\{z=te^{i\Psi}:t\in [0,\infty)\}$. 
It is important to point out that item $(iii)$ appears as part of Definition \ref{defzeta} largely for convenience since it allows us to use the simple ray $R_{\Psi}$ as a branch cut. We could have, instead, more generally assumed that there is some curve connecting $0$ to $\infty$ defining the branch cut of the map $a \mapsto a^{-s}$. In that case, the curve must lie in $\mathbb C \setminus \mathcal B_\delta$, defined in \eqref{2.11} for some $\delta > 0$.

\begin{remark}\label{sub1.1}
We now briefly make our previous comments regarding the use of the Mellin transform to produce a representation of $\zeta$-functions more precise. A detailed description of these ideas can be found in \cite{jor93}. If a sequence of complex numbers $B=\{b_{n}\}_{n\in\N}$ satisfies the condition $(i)$ of Definition \ref{defzeta} {\it and} tends to infinity in a sector contained in the right half of the complex plane, then we can define the $\theta$-function associated with that sequence as
\begin{align}
    \theta_B(t)=\sum_{n=1}^{\infty}e^{-b_n t}, \quad t>0. 
\end{align}
This function is of particular importance since it can be related to the $\z$-function of a sequence through the Mellin transform \cite{jor93}. In fact, let us assume that the sequence $S=\{a_n\}_{n\in\N}$ satisfies the assumptions of Definition \ref{defzeta} and that $a_n$ belong to a sector properly contained in the right half of the complex plane.
Then, by using the Mellin transform of the exponential, that is,
\begin{align}
\int_0^\infty t^{s-1}e^{-\lambda t}\, dt=\Gamma(s)\lambda^{-s},\quad \Re(\lambda),\Re(s)>0,
\end{align}
we can write 
\begin{align}\label{rem2.5}
\zeta_S(s)=\sum_{n=1}^\infty a_n^{-s}=\frac{1}{\Gamma(s)}\int_0^\infty t^{s-1}\theta_S(t)\, dt,\quad \Re(s)>\alpha.
\end{align}
Let us add here that we can treat the case of (necessarily) finitely many terms in the sequence $S=\{a_n\}_{n\in\N}$ having real part negative since we can separate them from the sum and proceed with the Mellin transform for the remaining infinitely many terms with positive real part.  
According to Theorem 1.5 of \cite{jor93}, if $\theta_S(t)$ is bounded for $t\to\infty$ and has, as $t\to 0^+$, an asymptotic expansion of the form
\begin{align}\label{rem2.6}
    \theta_{S}(t)=\sum_{j = 1}^{q-1}b_{p_j}(\ln\,t)t^{p_j}+O\left(t^{p_q}|\ln\,t|^{m(p_q)}\right), \quad m(p_q)=\textrm{max deg}\,b_p,
\end{align}
with $P=(p_1,p_2,\ldots,p_q)$ a tuple of strictly increasing real numbers and $b_{p}(t)$ polynomials, then $\z_S(s)$ in \eqref{rem2.5} has an analytic continuation to a meromorphic function in the region $p_q<\Re(s)\leq \alpha$. The poles of $\z_S(s)$ are located at the points $\{-p\}$ of $P$. When $P$ is infinite and $p_q \to \infty$ as $q \to \infty$ (namely the expansion \eqref{rem2.6} is valid to all orders) then one can extend $\z_S(s)$ to a meromorphic function in $\Re(s)\leq\alpha$.     

It is clear from this description that in order to perform the analytic continuation of $\z_{S}(s)$ one needs to be assured of the existence of the asymptotic expansion of $\theta_S(t)$ as $t\to 0$ of the form given in \eqref{rem2.6}. This, in practice, is somewhat difficult to do \cite{jor93} but there are notable cases in which the $\theta$-function has been shown to have such an asymptotic expansion. For instance, the $\theta$-function associated with the spectrum of a Laplace operator, also known as the trace of the heat kernel, on Riemannian manifolds with or without a boundary has an expansion of the type \eqref{rem2.6} but without logarithmic terms. More cases obviously exist in one or higher dimensions (it is impossible to exhaustively discuss the vast amount of literature that exists on the heat kernel and its trace and corresponding small-$t$ asymptotic expansion, therefore we refer the reader to the compendium \cite{Vas03} and references therein).  

However, the requirement that $\theta_S(t)$ possesses a small-$t$ expansion of the form \eqref{rem2.6} can be too restrictive for certain situations. In fact, an asymptotic expansion of the form \eqref{rem2.6} can only allow for the development of poles (simple or of higher order when logarithmic terms are present) in the corresponding $\zeta$-function. There are, however, settings in which the $\z$-function develops branch points (see, e.g., \cite{FPS25,FS25,Ki06,Ki08}) and these problems cannot be studied by using the Mellin transform representation of the $\z$-function. 
The second problem lies in the fact that the Mellin transform representation is valid under the assumption that the terms of the sequence $S$ be in a sector properly contained in the right half of the complex plane. Although  this assumption is not an issue in physical applications and in the spectral theory of self-adjoint operators, it can be restrictive for the study of the $\z$-function of more general complex sequences (which is the goal of this paper). Lastly, the representation \eqref{rem2.5} provides poles and residues of the $\z$-function very easily once the expansion \eqref{rem2.6} is known.

In stark contrast to this, the contour integral representation we prove below is valid in all of these settings without any modification, showing how versatile it really is. This includes the setting in which the $\z$-function possesses singularities other than poles. Indeed, by allowing for a more general  asymptotic expansion of the characteristic function in Assumption \ref{assump2.9}, one can obtain an analytic continuation similar to the one outlined in the proof of Theorem \ref{t2.11} but showing the presence of branch points. This analysis has been recently done in the case of the P\"oschl--Teller potential in \cite{FS25}.        
\end{remark}

Let $\{a_n\}_{n\in\N}$ be a sequence satisfying Definition \ref{defzeta} and let $k\in\N$ be the positive integer such that $k-1\leq \alpha<k$. 
By the Hadamard factorization theorem \cite[Sec.~13.9 and 13.11]{Nev07} the product
\begin{equation}\label{c1}
   H(z)=\prod_{n=1}^{\infty}E\left(\frac{z}{a_n},k-1\right),\quad \text{ with }\quad 
  E\left(\frac{z}{a_n},k-1\right)=\left(1-\frac{z}{a_n}\right)\exp\left[\sum_{j=1}^{k-1}\frac{1}{j}\left(\frac{z}{a_n}\right)^{j}\right],  
\end{equation}
defines an entire function of $z\in\C$ with roots only at the points $\{a_n\}_{n\in\N}$. We will refer to $H(z)$ as the \textit{Hadamard characteristic function} associated with the sequence $\{a_n\}_{n\in\N}$ and to its expression on the left-hand side of \eqref{c1} as its Hadamard factorization. We will use the expression \textit{characteristic function} for any entire function $F$ with the same zero set $S$ counting multiplicities. Corollary \ref{c2.6} highlights some advantages of using the Hadamard form to compute the values of the associated $\zeta$ at the integers greater than $\alpha$ while formula \eqref{2.30aa} underscores its merits in the process of deforming the complex contour integral representation of the associated $\zeta$-function. In this work, we will assume that all characteristic functions are of minimal order, see Remark \ref{rem2.3}.

The logarithmic derivative of $H(z)$, that is $\frac{d}{dz}\ln\,H(z)$, is then a  meromorphic function in $\C$ possessing poles exactly at the points of the sequence $\{a_n\}_{n\in\bbN}$. The following technical result will be useful later for the construction of an integral representation of the $\z$-function $\z_S(s)$: 
\begin{lemma}\label{l2.2}
    Let $H(z)$ be the function defined in \eqref{c1}. Then for each real $p$, with $\alpha< p\leq k = \lfloor \alpha \rfloor + 1$ and $\delta>0$ there exists an $A(\delta, p) > 0$ such that
    \begin{align}\label{C_eps_p}
        \Big| \frac{d}{dz} \ln \, H(z) \Big| \leq A(\delta,p) |z|^{p-1},
    \end{align}
    for $z\in\C\backslash\mathcal{B}_\delta$ with $\mathcal{B}_\delta$ given in \eqref{2.11} $($see also Fig.~\ref{Fig:Contour}$)$.
\end{lemma}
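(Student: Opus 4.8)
The plan is to start from an explicit formula for the logarithmic derivative and then estimate it by separating the zeros according to their distance from $z$. Differentiating the Hadamard factorization \eqref{c1} term by term, each elementary factor contributes
\[
\frac{d}{dz}\ln E\left(\frac{z}{a_n},k-1\right)=\frac{1}{z-a_n}+\sum_{j=1}^{k-1}\frac{z^{j-1}}{a_n^{j}}=\frac{z^{k-1}}{a_n^{k-1}(z-a_n)},
\]
where the second equality follows by summing the finite geometric series. Hence
\[
\frac{d}{dz}\ln H(z)=z^{k-1}\sum_{n=1}^{\infty}\frac{1}{a_n^{k-1}(z-a_n)}.
\]
The interchange of differentiation and summation is legitimate because the series on the right converges locally uniformly on $\C\setminus\{a_n\}_{n\in\N}$: on any compact set, its tail $|a_n|\geq 2|z|$ is dominated by a constant multiple of $\sum_n|a_n|^{-k}$, which is finite since $k>\alpha$. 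It therefore suffices to bound the displayed sum by $A(\delta,p)|z|^{p-k}$ for $z\in\C\setminus\mathcal{B}_\delta$.

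Fix such a $z$ and write $r=|z|$. I would split the index set into a \emph{far} part $F=\{n:|a_n|\geq 2r\}$ and a \emph{near} part $N=\{n:|a_n|<2r\}$. For $n\in F$ the triangle inequality gives $|z-a_n|\geq|a_n|-r\geq|a_n|/2$, so each term is at most $2|a_n|^{-k}$. Factoring $|a_n|^{-k}=|a_n|^{-p}|a_n|^{p-k}\leq(2r)^{p-k}|a_n|^{-p}$ (using $p\leq k$ together with $|a_n|\geq 2r$) and summing the convergent series $\sum_n|a_n|^{-p}$ (convergent precisely because $p>\alpha$) bounds the far part of the sum by a constant multiple of $r^{p-k}$, which is the required order.

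The near part is the crux. Here I would invoke the defining property of $\mathcal{B}_\delta$ (see \eqref{2.11}), which ensures $|z-a_n|\geq\delta|a_n|$ for every $n$ whenever $z\notin\mathcal{B}_\delta$. Combining this with the trivial estimate $|z-a_n|\geq r-|a_n|$ upgrades it to a uniform lower bound $|z-a_n|\geq c\,r$ valid for all $n\in N$, with $c=\min\{1/2,\delta/2\}$: if $|a_n|<r/2$ then $|z-a_n|\geq r/2$, while if $r/2\leq|a_n|<2r$ then $|z-a_n|\geq\delta|a_n|\geq\delta r/2$. Consequently the near part is at most $(cr)^{-1}\sum_{n\in N}|a_n|^{-(k-1)}$, and it remains to control this partial sum. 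Introducing the counting function $\nu(t)=\card\{n:|a_n|\leq t\}$, the definition of the exponent of convergence yields, for every $\epsilon>0$, a constant $C_\epsilon$ with $\nu(t)\leq C_\epsilon t^{\alpha+\epsilon}$. A summation-by-parts (Abel/Riemann--Stieltjes) estimate then gives $\sum_{|a_n|\leq T}|a_n|^{-(k-1)}\leq C'_\epsilon\,T^{\alpha+\epsilon-(k-1)}$, where $k-1\leq\alpha<k$ guarantees that the exponent $\alpha+\epsilon-k$ of the resulting integrand exceeds $-1$, so the integral is controlled by its upper endpoint. Choosing $\epsilon=p-\alpha>0$ and $T=2r$ gives $\sum_{n\in N}|a_n|^{-(k-1)}\leq C''r^{\,p-(k-1)}$, whence the near part of the sum is bounded by a constant multiple of $r^{-1}\cdot r^{\,p-k+1}=r^{\,p-k}$.

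Adding the two estimates and multiplying through by $|z|^{k-1}$ then yields $\big|\frac{d}{dz}\ln H(z)\big|\leq A(\delta,p)|z|^{p-1}$ for all $z\in\C\setminus\mathcal{B}_\delta$, with $A(\delta,p)$ depending only on $\delta$, $p$, and the fixed sequence (through $c$ and the constants $\sum_n|a_n|^{-p}$ and $C_\epsilon$). I expect the main obstacle to be the near part: both the passage from the exclusion condition on $\mathcal{B}_\delta$ to the uniform bound $|z-a_n|\geq c\,r$, and, more substantially, the partial-sum estimate for $\sum_{|a_n|\leq T}|a_n|^{-(k-1)}$, which is precisely where the relationship between the exponent of convergence and the growth of the counting function $\nu(t)$ must be exploited. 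The far part, by contrast, is a routine consequence of the absolute convergence of $\sum_n|a_n|^{-p}$.
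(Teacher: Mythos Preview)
Your proof is correct and follows the same outline as the paper's: derive the explicit formula $\frac{d}{dz}\ln H(z)=z^{k-1}\sum_n a_n^{-(k-1)}(z-a_n)^{-1}$, then split the sum into far and near zeros. The far-part estimates are essentially identical. The genuine difference lies in the near part. You bound $\sum_{|a_n|<2r}|a_n|^{-(k-1)}$ by invoking the counting-function growth $\nu(t)\leq C_\epsilon t^{\alpha+\epsilon}$ together with an Abel/Riemann--Stieltjes summation. The paper instead observes directly that on the near set one has $|a_n|\lesssim |z|$, whence
\[
|a_n|^{-(k-1)}=|a_n|^{-p}\,|a_n|^{p-k+1}\leq C\,|z|^{p-k+1}\,|a_n|^{-p}
\]
(using $p-k+1>0$), and then simply sums the convergent series $\sum_n|a_n|^{-p}$ once more. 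This shortcut bypasses the counting function entirely and yields the same $|z|^{p-k}$ bound with less machinery. Your route is perfectly rigorous, but the paper's is more economical: it exploits the convergence of $\sum_n|a_n|^{-p}$ for \emph{both} halves of the split, rather than appealing to a separate growth estimate for the near zeros.
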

\begin{proof}
Note that from \eqref{c1} we obtain the following formula for the logarithmic derivative of $H$:

\begin{align}\label{logDer}
    - \frac{d}{dz} \ln \, H(z) = z^{k-1}\sum_{n = 1}^\infty \frac{1}{a_n - z} \frac{1}{a_n^{k-1}}.
\end{align}
Recall that $\sum_{n = 1}^\infty |a_n|^{-k}$ converges by assumption, implying that the above sum converges locally uniformly for $z \in \C \setminus \lbrace a_1, a_2, \dots \rbrace$. We can bound the growth of the logarithmic derivative of $H$ using a standard argument. Choose any $r \in (0,1)$ and separate the infinite sum in \eqref{logDer} as follows:
\begin{align}
    \sum_{n = 1}^\infty \frac{1}{a_n - z} \frac{1}{a_n^{k-1}} = \sum_{n \colon |z/a_n| < r} \frac{1}{1 - z/a_n} \frac{1}{a_n^{k}} \ +  \sum_{n \colon |z/a_n| \geq r} \frac{1}{a_n - z} \frac{1}{a_n^{k-1}}. 
\end{align}
The first sum on the right-hand side has infinitely many terms and can be bounded as 
\begin{align}
    \Big|\sum_{n \colon |z/a_n| < r} \frac{1}{1 - z/a_n} \frac{1}{a_n^{k}}\Big|\leq\frac{1}{1-r}\sum_{n \colon |z/a_n| < r} |a_n|^{-k}.
\end{align}
However, since $|a_n|>r^{-1}|z|$ and $p\leq k$ we have
\begin{align}\label{2.9}
    |a_n|^{-k}=|a_n|^{-p}|a_n|^{p-k}\leq r^{k-p}|a_n|^{-p}|z|^{p-k}, 
\end{align}
and, due to the converge of the series $\sum_{n=1}^\infty |a_n|^{-p}$, there exists a constant $C>0$ such that
\begin{align}\label{2.34}
   \Big| \sum_{n \colon |z/a_n| < r} \frac{1}{1 - z/a_n} \frac{1}{a_n^{k}}\Big| \leq C\frac{r^{k-p}}{1-r}|z|^{p-k}.
\end{align}
Note that this bound holds with no restrictions on $z$. 

In the second sum, since $|z/a_n| \geq r$, it becomes possible to find values of $a_n$ such that $z=a_n$. To avoid this case, we introduce the region 
\begin{align}\label{2.11}
  \mathcal{B}_\delta=\bigcup_{n\in\N}B_{\delta}(a_n),\quad\textrm{with}\quad B_{\delta}(a_n)=\{z\in\C \ \colon |a_n-z|\leq \delta|a_n|\},  
\end{align}
and we consider the restriction $z\in\C\backslash\mathcal{B}_\delta$. We claim that for each $\delta > 0$ there exists a $\delta'> 0$, independent of $a_n$, such that $|a_n - z| \leq \delta'|z|$ implies $|a_n - z| \leq \delta |a_n|$, i.e., $z \in B_\delta(a_n)$. In fact
\begin{align*}
    |a_n - z| \leq \delta' |z| \leq \delta'(|a_n| + |a_n - z|) \quad \Rightarrow \quad |a_n - z| \leq \delta'/(1-\delta')|a_n|.
\end{align*}
In particular, for $z \not \in B_\delta(a_n)$, there exists a $\delta'$ such that $|a_n - z| > \delta'|z|$. It follows that for $z \not \in \mathcal B_\delta$ 
\begin{align}
    \Big|\sum_{n \colon |z/a_n| \geq r} \frac{1}{a_n - z} \frac{1}{a_n^{k-1}}\Big|\leq\sum_{n \colon |z/a_n| \geq r}\frac{1}{\delta'|z||a_n|^{k-1}}.
\end{align}
This time as $|z| \geq r |a_n|$ and $p-k+1 > 0$ 
we have (cf.~\eqref{2.9})
\begin{align*}
    |z|^{-1}|a_n|^{-k+1} = |z|^{-1} |a_n|^{-p} |a_n|^{p-k+1} \leq |z|^{p-k} |a_n|^{-p} r^{k-p-1}
\end{align*}
Since $\sum_{n \colon |z/a_n| \geq r}|a_n|^{-p}<C_1$, we conclude that 
\begin{align}\label{2.37}
    \Big|\sum_{n \colon |a_n/z| \geq r} \frac{1}{a_n - z} \frac{1}{a_n^{k-1}}\Big|\leq C_1\frac{ r^{k-p-1}}{\delta'}|z|^{p-k}.
\end{align}

The results \eqref{2.34} and \eqref{2.37} allow us to conclude that for some $A(\delta, p)>0$
 \begin{align}\label{Est_F1}
     \left| \frac{d}{dz} \ln \, H(z) \right| \leq A(\delta,p) |z|^{p-1}, 
 \end{align}
in the region $z\in\C\backslash\mathcal{B}_\delta$.   
\end{proof}
The logarithmic derivative of $H(z)$ is meromorphic and, according to Lemma \ref{l2.2}, it is polynomially bounded in the region $\C\backslash\mathcal{B}_\delta$, that is as long as $z$ does not get {\it too close} to the points ${a_n}$. In particular, provided condition $(iii)$ of Definition \ref{defzeta} holds, we can choose for any $\varepsilon' < \varepsilon$ a $\delta$ sufficiently small, such that \eqref{Est_F1} holds in $\C \setminus \mathcal{I}_{\varepsilon'}$ (see \eqref{I_eps}). By making $\varepsilon$ smaller if necessary, we can assume that $\varepsilon' = \varepsilon$.   

Lemma \ref{l2.2} is therefore crucial for proving the following contour integral representation of $\z_S(s)$.

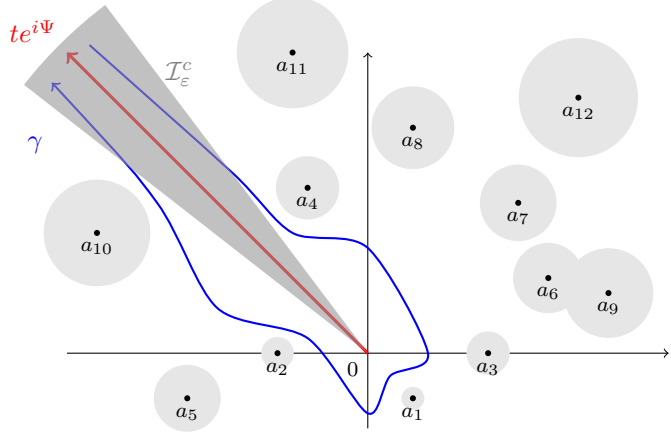
\begin{figure}
\begin{tikzpicture}[scale=2]

\draw[->] (-2,0) -- (2,0);
\draw[->] (0,-0.5) -- (0,2);

\draw[very thick, red, ->] (0,0) -- (-2,2) node[above left] {$t e^{i\Psi}$};

\draw[blue, thick, <-]
    plot[smooth] coordinates {
        (-2.1, 1.8)
        (-1.4, 1)
        (-1,0.3)
        (-0.4, 0.1)
        (0, -0.4)
        (0.15,-0.15)
        (0.4, 0)
        (0, 0.7)
        (-0.5, 0.8)
        (-0.9, 1.2)
        (-1.85, 2.05)
    };

\foreach \x/\y in {-1.8/0.8, 0.3/-0.3, -0.4/1.1, 1.4/1.7, 0.8/0, 1/1, 0.3/1.5, 1.6/0.4, 1.2/0.5, -0.6/0, -1.2/-0.3, 1.6/0.4, -0.5/2} {
    \pgfmathsetmacro{\r}{0.18*sqrt(\x*\x + \y*\y)} 
    \fill[gray!20] (\x,\y) circle (\r);
    \fill[black] (\x,\y) circle (0.02);
}

\node at (0.3,-0.3) [below] {\footnotesize{$a_1$}};
\node at (-0.6,0) [below] {\footnotesize{$a_2$}};
\node at (0.8,0) [below] {\footnotesize{$a_3$}};
\node at (-0.4,1.1) [below] {\footnotesize{$a_4$}};
\node at (-1.2,-0.3) [below] {\footnotesize{$a_5$}};
\node at (1.2,0.5) [below] {\footnotesize{$a_6$}};
\node at (1,1) [below] {\footnotesize{$a_7$}};
\node at (0.3,1.5) [below] {\footnotesize{$a_8$}};
\node at (1.6,0.4) [below] {\footnotesize{$a_9$}};
\node at (-1.8,0.8) [below] {\footnotesize{$a_{10}$}};
\node at (-0.5,2) [below] {\footnotesize{$a_{11}$}};
\node at (1.4,1.7) [below] {\footnotesize{$a_{12}$}};

\node at (0,0) [below left] {\footnotesize{$0$}};

\node at (-2.1,1.5) [below left] {\textcolor{blue}{$\gamma$}};

\node at (-1.1,2) [below left] {\textcolor{gray}{$\mathcal I_\varepsilon^c$}};

\fill[gray!80, opacity=0.6] (0,0) 
    -- ({2.9*cos(127)}, {2.9*sin(127)}) 
    arc[start angle=127, end angle=142, radius=2.9cm] 
    -- cycle;

\end{tikzpicture}
\caption{Illustration of a sequence $\{a_n\}_{n\in\bbN}$ as black dots (with $\mathcal{B}_\delta (a_n)$ the light gray shaded regions around each $a_n$), the complement $\mathcal{I}_\varepsilon^c$ given as the dark gray sector, the branch cut $te^{i \Psi}$ in red, and the contour $\gamma$ in blue.}
\label{Fig:Contour}
\end{figure}

\begin{theorem}\label{Theorem_Integral}
Let $\z_{S}(s)$ be the $\z$-function of a sequence $S$ satisfying $(i)$, $(ii)$ in Definition \ref{defzeta}, such that $\Arg(a_n) \not = \Psi \in [-\pi, \pi)$ and the branch cut for $a \mapsto a^{-s}$ is chosen on the ray $R_\Psi=\{z=te^{i\Psi}:t\in [0,\infty)\}$. Let $H(z)$ be the characteristic function, \eqref{c1}, of $S$. Assume that $\gamma$ is a contour belonging to $\C\backslash\mathcal{B}_\delta$, for some $\delta>0$ small enough, that avoids the branch cut $R_\Psi$ and encloses, in the counterclockwise direction, all the points of the sequence $S$ $($see Fig.~\ref{Fig:Contour}$)$. Then for $\Re(s)>\alpha$,
\begin{align}\label{2.17}
    \zeta_S(s)=\frac{1}{2\pi i}\int_\gamma dz \, z^{-s} \frac{d}{dz} \ln \, H(z) .
\end{align}
\end{theorem}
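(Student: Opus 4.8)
The plan is to evaluate the integral by the argument principle and to use the polynomial decay from Lemma~\ref{l2.2} for two purposes simultaneously: to guarantee that the improper integral over the unbounded contour $\gamma$ converges, and to kill the contribution of an arc that closes $\gamma$ at a large radius. The identity \eqref{2.17} then drops out by letting that radius tend to infinity.

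First I would check that the integrand is well defined and continuous along $\gamma$, and identify its residues. Since $\gamma\subset\C\setminus\mathcal{B}_\delta$ it avoids every $a_n$, so $\frac{d}{dz}\ln H(z)$ is holomorphic on $\gamma$; and since $\gamma$ avoids the ray $R_\Psi$, the map $z\mapsto z^{-s}$ is single-valued and holomorphic on $\gamma$ and throughout the cut plane $\C\setminus R_\Psi$ it bounds. If a value $a$ occurs in $S$ with multiplicity $m_a$, then by \eqref{c1} the function $H$ has a zero of order $m_a$ at $a$, so $\frac{d}{dz}\ln H$ has a simple pole there with residue $m_a$; as $z^{-s}$ is holomorphic at $a\neq 0$, the residue of $z^{-s}\frac{d}{dz}\ln H(z)$ at $z=a$ equals $m_a\,a^{-s}$.

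Next I would truncate and close the contour. As drawn in Fig.~\ref{Fig:Contour}, and as forced by the requirement that $\gamma$ enclose all $a_n$ while avoiding $R_\Psi$, both ends of $\gamma$ escape to infinity asymptotic to the branch-cut direction $\Psi$. Cutting them at radius $R$ yields two points with $|z|=R$ near argument $\Psi$, which I join by a short arc $C_R\subset\C\setminus\mathcal{B}_\delta$ with $|z|$ comparable to $R$ and length $O(R)$. The closed contour $\gamma_R\cup C_R$ encircles counterclockwise exactly the finitely many $a_n$ it now surrounds, so the residue theorem (valid in $\C\setminus R_\Psi$, where $z^{-s}$ is single-valued) gives
\begin{align*}
\frac{1}{2\pi i}\int_{\gamma_R} z^{-s}\frac{d}{dz}\ln H(z)\,dz
= \sum_{a_n \text{ inside}} a_n^{-s}
-\frac{1}{2\pi i}\int_{C_R} z^{-s}\frac{d}{dz}\ln H(z)\,dz .
\end{align*}
As $R\to\infty$ the enclosed points exhaust $S$, and the finite sum tends to $\zeta_S(s)$ by the absolute convergence guaranteed for $\Re(s)>\alpha$.

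The crux is to estimate the two remaining integrals. Fix $p$ with $\alpha<p\le k$ and $p<\Re(s)$; such a $p$ exists because $\Re(s)>\alpha$ and $\alpha<k=\lfloor\alpha\rfloor+1$. Lemma~\ref{l2.2} gives $\big|\frac{d}{dz}\ln H(z)\big|\le A(\delta,p)|z|^{p-1}$ on $\C\setminus\mathcal{B}_\delta$, while $|z^{-s}|=|z|^{-\Re(s)}e^{\Im(s)\arg z}\le C_s|z|^{-\Re(s)}$ since $\arg z$ stays in a bounded range off the cut. Hence the integrand is $O\big(|z|^{p-1-\Re(s)}\big)$ with exponent $p-1-\Re(s)<-1$, which makes $\int_\gamma$ absolutely convergent and yields $\int_{\gamma_R}\to\int_\gamma$; on $C_R$ the same bound gives $\big|\int_{C_R}\cdots\big|\lesssim R^{p-1-\Re(s)}\cdot\mathrm{length}(C_R)\lesssim R^{p-\Re(s)}\to 0$. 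Passing to the limit in the displayed identity then establishes \eqref{2.17}. I expect the main obstacle to be the construction of the closing arc $C_R$: one must confirm that the two legs of $\gamma$ stay near the direction $\Psi$ so that they can be joined by an arc of length $O(R)$ lying in $\C\setminus\mathcal{B}_\delta$ on which $|z|$ is comparable to $R$, for it is precisely this control that lets Lemma~\ref{l2.2} apply on $C_R$ and force its contribution to vanish.
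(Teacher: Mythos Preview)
Your approach differs from the paper's, which truncates the product rather than the contour: it sets $H_N(z)=\prod_{n=1}^N E(z/a_n,k-1)$, observes that Lemma~\ref{l2.2} holds for every $H_N$ with the \emph{same} constant $A(\delta,p)$, and evaluates $\frac{1}{2\pi i}\int_\gamma z^{-s}\frac{d}{dz}\ln H_N\,dz=\sum_{n=1}^N a_n^{-s}$ directly on $\gamma$. The point is that $\frac{d}{dz}\ln H_N$ has only the $N$ poles $a_1,\dots,a_N$, so a circular closing arc of radius $R>(1+\delta)\max_{n\le N}|a_n|$ trivially lies in $\C\setminus\mathcal B_\delta$ and its contribution vanishes. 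The uniform bound then serves as a dominating function, and dominated convergence sends $N\to\infty$.

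Your truncation-in-$R$ route has a genuine geometric gap. The two endpoints of $\gamma_R$ lie on \emph{opposite} sides of the cut $R_\Psi$, so the ``short arc'' you describe necessarily crosses $R_\Psi$; then $z^{-s}$ is not single-valued on $\gamma_R\cup C_R$ and the residue theorem in $\C\setminus R_\Psi$ does not apply. The only way to close without crossing the cut is the long arc sweeping nearly $2\pi$ through the region where the $a_n$ live, and there is no reason this arc sits in $\C\setminus\mathcal B_\delta$: for $|a_n|\sim n^{1/\alpha}$ with $\alpha\ge 1$ the annuli $((1-\delta)|a_n|,(1+\delta)|a_n|)$ eventually cover every large radius, so no circle $|z|=R$ avoids all the balls. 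One can try to deform around the finitely many balls meeting a given circle, but controlling the length so that Lemma~\ref{l2.2} still kills the contribution requires work you have not supplied. The paper's device of passing to $H_N$ first is precisely what makes the closing-arc step trivial.
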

\begin{proof}
Let us introduce the truncated characteristic function   \begin{align}\label{2.18}
    H_N(z) = \prod_{n=1}^N E\left(\frac{z}{a_n},k-1\right).
\end{align}  
The result of Lemma \ref{l2.2} holds for $H_N$ with virtually the same proof and hence
\begin{align}\label{2.20}
    \Big| -\frac{d}{dz} \ln \, H_N(z) \Big| \leq A(\delta, p) |z|^{p-1},
\end{align}
for $z\in\C\backslash\mathcal{B}_\delta$. In particular, we can, without loss of generality, choose the same constant $A(\delta, p) > 0$ as in \eqref{C_eps_p}, as we now have to bound the truncated sum instead of the infinite sum in \eqref{logDer}.
As $\gamma$ is contained in the region $\C\backslash\mathcal{B}_\delta$ by assumption, we conclude that
\begin{align}\label{Est_FN}
     \Big|z^{-s}\frac{d}{dz} \ln \, H_N(z) \Big| \leq  A(\delta, p) e^{|2\pi \Im(s)|}|z|^{p-1-\Re(s)}, \qquad z \in \gamma.
\end{align}
In particular, for $\Re(s)>p$ the integral 
\begin{align}
    \frac{1}{2\pi i}\int_\gamma dz \, z^{-s} \frac{d}{dz} \ln \, H_N(z) ,
\end{align}
converges and is equal to the truncated sum $\sum_{n=1}^N a_n^{-s}$ by Cauchy's theorem.

Note that the inequality \eqref{Est_FN} remains true upon substituting $H$ for $H_N$ with the same constant $A(\delta, p)$ (this is just the content of \eqref{Est_F1}).
The key observation is that the right-hand side of \eqref{Est_FN} is a dominating function for the sequence of functions $z^{-s}\frac{d}{dz} \ln \, H_N(z)|_{z \in \gamma}$ which converges pointwise to $z^{-s}\frac{d}{dz} \ln \, H(z)|_{z \in \gamma}$ as $N \to \infty$. Using dominated convergence we conclude that
\begin{align}
    \lim_{N\to \infty} \underbrace{\frac{1}{2\pi i}\int_\gamma dz \, z^{-s} \frac{d}{dz} \ln \, H_N(z)}_{= \ \sum_{n=1}^N a_n^{-s}} = \frac{1}{2\pi i}\int_\gamma dz \, z^{-s} \frac{d}{dz} \ln \, H(z), \quad \Re(s) > p.
\end{align}
But clearly the left-hand side converges to $\zeta_S(s)$. Hence, we arrive at the following formula
\begin{align}
    \zeta_S(s)=\frac{1}{2\pi i}\int_\gamma dz \, z^{-s} \frac{d}{dz} \ln \, H(z), \quad \text{for } \, \Re(s) > p.
\end{align}
Finally, as this formula holds for  $\Re(s) > p$ with $p > \alpha$, it also holds for any $s$ satisfying $\Re(s) > \alpha$. 
\end{proof}


\begin{remark}\label{rem2.3}
    Any other entire function $F(z)$ with the same zeros as $H(z)$ in \eqref{c1} and of the same order can be written as \cite[Sec.~13.9]{Nev07} 
  \begin{align}
      F(z)=e^{\omega(z)} H(z),
  \end{align}
where $\omega(z)$ is a polynomial with $\deg(\omega) \leq \lfloor \alpha \rfloor$. With the assumptions of Theorem \ref{Theorem_Integral} (in particular $\Re(s) > \alpha$), we obtain
\begin{align}
    \frac{1}{2\pi i} \int_\gamma dz  \, z^{-s} \frac{d}{dz} \ln \, F(z) &= \frac{1}{2\pi i} \int_\gamma dz  \, z^{-s} \frac{d}{dz} \ln \, H(z) + \frac{1}{2\pi i} \int_\gamma dz  \, z^{-s} \frac{d}{dz} \omega(z)= \zeta_S(s).
\end{align}
Here, the second integral evaluates to $0$, as it converges for $\Re(s) > \alpha$ and the integrand is a holomorphic function in the interior of $\gamma$. This shows that the particular choice of characteristic function does not matter, so long as the order is minimal. 
\end{remark}

\subsection{Deformation of contour integral representation and positive integer values}
Next, we describe a general deformation process that extends the standard procedure utilized in the literature (see, e.g., \cite{FGKS21, FPS25}) to make the structure of the analytic continuation of the $\zeta$-function more apparent. In particular, we prove the following: 
\begin{theorem}\label{t2.7}
Suppose the characteristic function, $F(z)$, is entire with zeros at the sequence $S=\{a_n\}_{n\in\bbN}$ $($satisfying Definition \ref{defzeta}$)$ and minimal growth order $($as describe in Remark \ref{rem2.3}$)$. Let $C_R$ be a clockwise circle with radius $R$ parametrized via $z=R e^{i\theta}$ from $\theta=\Psi$ to $\theta=\Psi-2\pi$. Then the following integral representation holds$:$
\begin{equation}
\zeta_S(s)=e^{is(\pi-\Psi)}\frac{\sin(\pi s)}{\pi}\int_{R}^{\infty}dt\,t^{-s}\frac{d}{dt}\ln\,F\left(te^{i\Psi}\right)-\frac{1}{2\pi i}\int_{C_R}dz\,z^{-s}\frac{d}{dz}\ln\,F\left(z\right),\quad \Re(s)>\alpha,\label{2.28a}
\end{equation}
for $R<|a_1|$ where $a_1$ is the element of $S$ with the smallest modulus.

Moreover, if $\alpha<1$, then
\begin{equation}\label{2.30a}
\z_S(s)=e^{is(\pi-\Psi)}\frac{\sin(\pi s)}{\pi}\int_0^{\infty}dt\,t^{-s}\frac{d}{dt}\ln\,F\left(te^{i\Psi}\right),\quad \alpha<\Re(s)<1.
\end{equation}

If $F(z)$ is further assumed to be given by the Hadamard factorization \eqref{c1}, then
\begin{equation}\label{2.30aa}
\z_S(s)=e^{is(\pi-\Psi)}\frac{\sin(\pi s)}{\pi}\int_0^{\infty}dt\,t^{-s}\frac{d}{dt}\ln\,F\left(te^{i\Psi}\right),\quad \alpha<\Re(s)<\lfloor\a\rfloor+1.
\end{equation}
\end{theorem}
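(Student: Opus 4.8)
The plan is to take the representation $\zeta_S(s)=\frac{1}{2\pi i}\int_\gamma z^{-s}\frac{d}{dz}\ln F(z)\,dz$ of Theorem \ref{Theorem_Integral} (legitimate for any minimal-order $F$ by Remark \ref{rem2.3}) and collapse $\gamma$ onto the branch cut $R_\Psi$. Using condition $(iii)$ I would choose $\gamma$ so that it escapes to infinity along the two edges of the sector $\mathcal{I}_\varepsilon^c$ about $R_\Psi$ and otherwise loops once around the origin inside the disk $|z|<|a_1|$. The deformation then consists of two independent moves: swinging the two rays from the edges of $\mathcal{I}_\varepsilon^c$ onto the two sides of $R_\Psi$, and contracting the central loop to the circle $C_R$ with $R<|a_1|$. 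Because every $a_n$ lies in $\mathcal{I}_\varepsilon$, the region swept during both moves (a thin sector hugging the cut together with a disk of radius $<|a_1|$) contains none of the $a_n$, so Cauchy's theorem keeps the integral unchanged; crucially, no arc at infinity is ever introduced, and one never has to control the integrand on large circles that unavoidably meet the balls $\mathcal{B}_\delta$. Convergence of the ray integrals along the whole homotopy is supplied by Lemma \ref{l2.2} and the remark following it, which give $|\frac{d}{dz}\ln F(z)|\le A|z|^{p-1}$ throughout the cut-neighborhood $\C\setminus\mathcal{I}_{\varepsilon'}$; the integrand is thus $O(t^{p-1-\Re(s)})$ on the rays, integrable at infinity when $\Re(s)>p$, and letting $p\downarrow\alpha$ extends everything to $\Re(s)>\alpha$.

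Next I would evaluate the two ray integrals. On the side carrying $\Arg z=\Psi$ one has $z^{-s}=t^{-s}e^{-is\Psi}$, while on the side reached after encircling the origin $\Arg z=\Psi-2\pi$ and $z^{-s}=t^{-s}e^{-is\Psi}e^{2\pi i s}$; in both cases $dz=e^{i\Psi}dt$, and since $F'/F$ is single-valued the chain rule gives $\frac{d}{dz}\ln F(z)\,dz=\frac{d}{dt}\ln F(te^{i\Psi})\,dt$. Adding the two sides produces the factor $e^{-is\Psi}(e^{2\pi i s}-1)=2i\sin(\pi s)\,e^{is(\pi-\Psi)}$, which upon division by $2\pi i$ is exactly the prefactor $\frac{\sin(\pi s)}{\pi}e^{is(\pi-\Psi)}$ in front of $\int_R^\infty t^{-s}\frac{d}{dt}\ln F(te^{i\Psi})\,dt$. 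Accounting for the contribution of the circle $C_R$, traversed clockwise from $\theta=\Psi$ to $\theta=\Psi-2\pi$, then delivers \eqref{2.28a}.

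For \eqref{2.30a} and \eqref{2.30aa} I would send $R\to0$ in \eqref{2.28a}, the whole issue being the behavior of $F'/F$ at the origin. For a general minimal-order $F$ one only knows $F(0)\ne0$, so $F'/F$ is holomorphic and bounded near $0$; then on $C_R$ the integrand is $O(R^{-\Re(s)})$ and the circle integral is $O(R^{1-\Re(s)})\to0$, while the ray integrand is $O(t^{-\Re(s)})$ and hence integrable down to $t=0$, both exactly when $\Re(s)<1$. Together with the hypothesis $\alpha<1$ this yields \eqref{2.30a} on $\alpha<\Re(s)<1$. When $F=H$ is the Hadamard product, the explicit formula \eqref{logDer} shows $-\frac{d}{dz}\ln H(z)=z^{k-1}\sum_{n}a_n^{-(k-1)}(a_n-z)^{-1}$, and for $|z|\le|a_1|/2$ one has $|a_n-z|\ge|a_n|/2$ together with $\sum_n|a_n|^{-k}<\infty$, so the series is uniformly bounded and $\frac{d}{dz}\ln H(z)=O(|z|^{k-1})$ as $z\to0$. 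The same two estimates now improve to $O(R^{k-\Re(s)})\to0$ for the circle and $O(t^{k-1-\Re(s)})$ for the ray, controlled precisely when $\Re(s)<k=\lfloor\alpha\rfloor+1$, which gives \eqref{2.30aa}.

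The main obstacle is the first step: making the contour deformation rigorous without having to estimate an arc at infinity. The resolution, as sketched above, is to use condition $(iii)$ to route $\gamma$ to infinity along the $a_n$-free sector about the cut, so that the rays may be swung onto $R_\Psi$ and the loop contracted to $C_R$ while remaining in a region devoid of the $a_n$, with Lemma \ref{l2.2} ensuring uniform convergence throughout. The conceptually decisive point—and the reason \eqref{2.30aa} beats \eqref{2.30a}—is the simple observation that the admissible range of $\Re(s)$ is governed solely by the order of vanishing of $F'/F$ at the origin: this order is $0$ for a generic minimal-order $F$ (whose logarithmic derivative satisfies $\frac{d}{dz}\ln F(0)=\omega'(0)$, generically nonzero) but equals $k-1$ for the Hadamard form, producing the bounds $\Re(s)<1$ and $\Re(s)<\lfloor\alpha\rfloor+1$ respectively.
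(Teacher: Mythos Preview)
Your proposal is correct and follows essentially the same route as the paper: deform $\gamma$ onto the two sides of the cut plus the circle $C_R$ (using Lemma \ref{l2.2} and Remark \ref{rem2.3} to justify convergence), combine the two ray pieces via the $e^{-is\Psi}(e^{2\pi is}-1)$ factor, and then shrink $R\to 0$ controlled by the order of vanishing of $F'/F$ at the origin. The only cosmetic difference is that for the Hadamard case the paper reads off $F'(z)/F(z)=O(z^{k-1})$ from the Taylor expansion $E(z,k-1)=1+O(z^k)$ (hence $H(z)=1+\sum_{j\ge k}c_jz^j$), whereas you obtain the same bound directly from formula \eqref{logDer}; both arguments are equally short and lead to the identical constraint $\Re(s)<k=\lfloor\alpha\rfloor+1$.
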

\begin{proof}
Note that by Remark \ref{rem2.3}, we can consider a characteristic function, $F(z)$, with zeros $\{a_n\}_{n\in\bbN}$ and minimal growth order while still applying the estimate given in Lemma \ref{l2.2} to deform the contour to the branch cut for large-$|z|$ since the integrand in \eqref{2.17} remains polynomially bounded throughout. Hence, we proceed by deforming $\gamma$ into a contour that consists of three pieces: a portion hugging the branch cut from above parameterized by $z=t e^{i\Psi}$ from $t=\infty$ to $t=R$ (with $R<|a_1|$ where $a_1$ is the element $S$ with the smallest modulus), the circle $C_R$, and a portion hugging the branch cut from below parameterized by $z=t e^{i(\Psi-2\pi)}$ from $t=R$ to $t=\infty$. After the deformation
one obtains 
\begin{align}
\zeta_S(s)&=-\frac{e^{-is\Psi}}{2\pi i}\int_{R}^{\infty}dt\,t^{-s}\frac{d}{dt}\ln\,F\left(te^{i\Psi}\right)+\frac{e^{-is(\Psi-2\pi)}}{2\pi i}\int_{R}^{\infty}dt\,t^{-s}\frac{d}{dt}\ln\,F\left(te^{i(\Psi-2\pi)}\right)\nonumber\\
&\quad-\frac{1}{2\pi i}\int_{C_R}dz\,z^{-s}\frac{d}{dz}\ln\,F\left(z\right) \notag \\
&=e^{is(\pi-\Psi)}\frac{\sin(\pi s)}{\pi}\int_{R}^{\infty}dt\,t^{-s}\frac{d}{dt}\ln\,F\left(te^{i\Psi}\right)-\frac{1}{2\pi i}\int_{C_R}dz\,z^{-s}\frac{d}{dz}\ln\,F\left(z\right).
\end{align} 

We next consider the restriction on $\alpha$ to further deform the integral over the circle of radius $R$ in \eqref{2.28a} to hug the branch cut. By noticing that $F(z)$ is an entire function with $F(0)\neq0$, one concludes that, for any characteristic function with minimal growth order,
\begin{equation}\label{2.40}
\left|\frac{d}{dz}\ln\,F(z)\right|=C+O(|z|) ,\quad |z|\to 0,
\end{equation}  
for some $C\in\bbR$, yielding
\begin{equation}\label{2.30}
-\frac{1}{2\pi i}\int_{C_R}dz\,z^{-s}\frac{d}{dz}\ln\,F\left(z\right)=e^{is(\pi-\Psi)}\frac{\sin(\pi s)}{\pi}\int_0^{R}dt\,t^{-s}\frac{d}{dt}\ln\,F\left(te^{i\Psi}\right),\quad \Re(s)<1.
\end{equation}
Therefore, if $\alpha<1$, one obtains \eqref{2.30a} from \eqref{2.28a} and \eqref{2.30}. 

Moreover, notice that if $F(z)$ is given by the Hadamard factorization \eqref{c1}, then 
$(d/dz)\ln\, F(z)=c_{\lfloor\a\rfloor+1}(\lfloor\a\rfloor+1)z^{\lfloor\a\rfloor}+O(z^{\lfloor\a\rfloor+1})$ as, letting $k=\lfloor\a\rfloor+1$, the series expansion can be written as 
\begin{equation}\label{serieseq}
F(z)=1+\sum_{j=k}^\infty c_jz^j,
\end{equation}
since for $E$ in \eqref{c1} near $z=0$ we have $E(z, k-1) = (1-z)e^{-\log(1-z)+O(z^k)} = e^{O(z^k)} = 1 + O(z^k)$. Hence \eqref{2.40} can be further refined and the integral along the circle can be shrunk to the branch cut for $\Re(s)<1+\lfloor\a\rfloor$. Therefore, if $F(z)$ is given by the Hadamard factorization, the integral representation \eqref{2.30a} will always be valid for $\Re(s)\in(\alpha,\lfloor\a\rfloor+1)$.
\end{proof}
\begin{remark}
$(i)$ The proof of Theorem \ref{t2.7} employs the branch cut $R_{\Psi}$ to obtain the expression \eqref{2.28a}.
One could drop item $(iii)$ from Definition \ref{defzeta} and consider a more general branch cut curve connecting 0 to $\infty$ in $\C\backslash\mathcal{B}_{\delta}$. In this case the result of Theorem \ref{t2.7} is still valid but with the first integral in \eqref{2.28a} suitably modified by the parametrization of the chosen branch cut curve.\\[1mm]
$(ii)$ We point out that, if $\alpha\geq1$ and $F(z)$ is not assumed to be given by the Hadamard factorization \eqref{c1}, the integral over the circle in \eqref{2.28a} cannot in general be shrunk to the branch cut until the first integral in \eqref{2.28a} is analytically continued to the left of $\Re(s)=1$. The procedure of deforming the contour to completely hug the branch cut (as in \eqref{2.30a} and \eqref{2.30aa}) represents the first step of the standard analytic continuation of the spectral $\z$-function performed, for instance, in \cite{FGKS21, FPS25}, and holds, in general, for any characteristic function with minimal growth order if and only if $\alpha<1$. We will return to this representation in Remark \ref{remposint}.
\end{remark}

The following theorem regarding the $\zeta$-function at positive integer values follows readily from the integral representation \eqref{2.28a}. We emphasize  that Theorem \ref{t4} yields universal sum rules, valid for $\zeta$-functions associated with a sequence of complex numbers, in the form of a recurrence relation. That is, the value of the $\zeta$-function at positive integers (larger than $\alpha$) is written as a linear combination of the value of the $\zeta$-function at the previous admissible positive integers. As mentioned in the introduction, this result can be understood as the complete generalization of the classical sum rules known to be satisfied by certain sequences of numbers such as the inverse squares of Bessel function zeros (see, e.g., \cite{AB96}, \cite[Sec.~7.9]{EMOT53II}, \cite{Ra74,Sn60}). Other sum rule representations are discussed after the theorem.
\begin{theorem}\label{t4}
Given a characteristic function, $F(z)$, with zeros coinciding with the elements of the sequence $\{a_n\}_{n\in\bbN}$ $($satisfying Definition \ref{defzeta}$)$, minimal growth order $($as described in Remark \ref{rem2.3}$)$, and series expansion $F(z)=\sum_{j=0}^\infty c_jz^j,$ the following holds$:$
\begin{align}\label{2.34aa}
    \zeta_S(n)=-\Res \left[ z^{-n}\dfrac{d}{dz}\ln\,F(z);\ z=0 \right]=-n b_n,\quad n\in\mathbb{N},\ n>\alpha,
\end{align}
where
\begin{equation}\label{t2.28}
        b_1=\frac{c_{1}}{c_0},\quad
        b_j=\frac{c_{j}}{c_0}-\sum_{\ell=1}^{j-1}\left(\dfrac{\ell}{j}\right)\frac{c_{j-\ell}}{c_0} b_{\ell},\quad j\geq 2,
\end{equation}
implying 
\begin{align}
\zeta_S(n)=-n\frac{c_{n}}{c_0} -\sum_{\ell=\lfloor \alpha\rfloor+1}^{n-1}\frac{c_{n-\ell}}{c_0}\zeta_S(\ell)+\sum_{\ell=1}^{\lfloor \alpha\rfloor}\ell \frac{c_{n-\ell}}{c_0}b_{\ell},\quad n\in\mathbb{N},\ n>\alpha.
\end{align}

In particular, if $\alpha<1$ then 
\begin{equation}\label{t2.25}
\zeta_S(n)=-n\frac{c_{n}}{c_0} -\sum_{\ell=1}^{n-1}\frac{c_{n-\ell}}{c_0}\zeta_S(\ell),\quad n\in\mathbb{N}.
\end{equation}
\end{theorem}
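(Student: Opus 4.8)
The plan is to specialize the deformed representation \eqref{2.28a} to the positive integers $s=n>\alpha$, where the branch-cut contribution collapses and the value of $\zeta_S$ is governed entirely by the germ of $F$ at the origin. First I would check that the ray integral $\int_R^\infty t^{-n}\frac{d}{dt}\ln F(te^{i\Psi})\,dt$ is finite: along the ray $R_\Psi\subset\C\setminus\mathcal B_\delta$ one has $\big|\frac{d}{dt}\ln F(te^{i\Psi})\big|\le A(\delta,p)\,t^{p-1}$ by Lemma \ref{l2.2}, so with $p\in(\alpha,n)$ the integrand is $O(t^{p-1-n})$ with $p-1-n<-1$, hence integrable. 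As the coefficient of this integral in \eqref{2.28a} contains the factor $\sin(\pi s)$, which vanishes at $s=n$, the ray term drops out and only the circle integral over $C_R$ survives. Since $R<|a_1|$, the circle encloses no element of $S$, so the only singularity of the now single-valued integrand $z^{-n}\frac{d}{dz}\ln F(z)$ inside $C_R$ is the pole at $z=0$ produced by $z^{-n}$; evaluating by the residue theorem, with the prescribed orientation of $C_R$, gives the first equality in \eqref{2.34aa}, namely $\zeta_S(n)=-\Res\big[z^{-n}\frac{d}{dz}\ln F(z);\,z=0\big]$.

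Next I would make this residue explicit. As $F(0)=c_0\neq0$, one may expand $\ln\!\big(F(z)/c_0\big)=\sum_{j\geq1}b_j z^j$ near the origin, whence $\frac{d}{dz}\ln F(z)=\sum_{j\geq1}j\,b_j z^{j-1}$ and therefore $z^{-n}\frac{d}{dz}\ln F(z)$ has residue $n\,b_n$ at $z=0$; this produces the second equality $\zeta_S(n)=-n\,b_n$. To identify the $b_j$ with the recurrence \eqref{t2.28}, I would differentiate $F(z)=c_0\exp\!\big(\sum_{j\geq1}b_j z^j\big)$, i.e.\ use $F'(z)=\big(\frac{d}{dz}\ln F(z)\big)F(z)$, and compare the coefficient of $z^{j-1}$ on both sides of
\[
\sum_{j\geq1} j\,\tfrac{c_j}{c_0}\,z^{j-1}=\Big(\sum_{\ell\geq1}\ell\,b_\ell z^{\ell-1}\Big)\Big(\sum_{m\geq0}\tfrac{c_m}{c_0}z^{m}\Big).
\]
Isolating the top term $\ell=j$ (for which $c_0/c_0=1$) and solving for $b_j$ reproduces exactly $b_1=c_1/c_0$ and $b_j=\frac{c_j}{c_0}-\sum_{\ell=1}^{j-1}\frac{\ell}{j}\frac{c_{j-\ell}}{c_0}b_\ell$ for $j\geq2$.

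Finally, to pass to the sum rule for the $\zeta$-values themselves, I would substitute \eqref{t2.28} into $\zeta_S(n)=-n\,b_n$, obtaining $\zeta_S(n)=-n\frac{c_n}{c_0}+\sum_{\ell=1}^{n-1}\ell\frac{c_{n-\ell}}{c_0}b_\ell$, and then split the sum at $\ell=\lfloor\alpha\rfloor$. For the admissible indices $\ell>\alpha$, i.e.\ $\ell\geq\lfloor\alpha\rfloor+1$, the relation already proved gives $\ell\,b_\ell=-\zeta_S(\ell)$, so those terms become $-\frac{c_{n-\ell}}{c_0}\zeta_S(\ell)$, while the low-order terms $1\leq\ell\leq\lfloor\alpha\rfloor$ are left in the form $\ell\frac{c_{n-\ell}}{c_0}b_\ell$; this is precisely the stated recurrence, and when $\alpha<1$ the low-order sum is empty, yielding \eqref{t2.25}.

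I expect the only genuinely delicate point to be the first step: one must verify that the ray integral stays finite at $s=n$—so that multiplying it by $\sin(\pi n)=0$ truly annihilates it—and that the orientation convention for $C_R$ together with $R<|a_1|$ is used so that the surviving circle integral is exactly $-\Res$ at the origin. After that reduction, the remainder is purely formal manipulation of the exponential generating function $F/c_0=\exp\!\big(\sum_{j\geq1}b_j z^j\big)$, with no analytic subtleties.
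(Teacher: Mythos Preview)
Your proof is correct and follows essentially the same approach as the paper's: specialize \eqref{2.28a} at $s=n>\alpha$, observe that the ray integral converges there and is killed by the $\sin(\pi s)$ prefactor, and evaluate the surviving clockwise circle integral as minus the residue at the origin. The paper's own proof is terser---it merely cites \cite[Thm.~4.1]{FGKS21} and \cite[Thm.~3.7]{FPS25} for the mechanics---whereas you have written out explicitly the convergence check via Lemma~\ref{l2.2}, the derivation of the $b_j$ recursion from $F'=F\cdot(\ln F)'$, and the splitting of the sum at $\lfloor\alpha\rfloor$, which is exactly the content being deferred to those references.
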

 \begin{proof}
The proof is analogous to that of \cite[Thm. 4.1]{FGKS21} (see also \cite[Thm. 3.7]{FPS25}) noting that the branch cut $R_\Psi$ is no longer needed whenever $s=n\in\mathbb{N}$ so that \eqref{2.28a} reduces to a clockwise circle about the origin whenever $n>\alpha$. This is due to the fact that the first integral in \eqref{2.28a} converges for $\Re(s)>\a$ and vanishes at $s=n>\a$ since it is multiplied by the prefactor $\sin(\pi s)$.
\end{proof}

\begin{remark}\label{r2.10}
$(i)$ As previously observed, we could repeat the proof of Theorem \ref{t4} without item $(iii)$ of Definition \ref{defzeta}. Since the argument of the proof does not depend on the branch cut, the results of Theorem \ref{t4} would remain completely unchanged.\\[1mm]
$(ii)$ We would like to point out that another representation for $\zeta_S(n)$ is given by
\begin{equation}
\zeta_S(n)=-n\sum_{k=1}^{n}\frac{(-1)^{k-1}}{k c_0^k} B_{n,k}(c_1,c_2,\dots,c_{n-k+1}),\quad n\in\bbN,\ n>\alpha,
\end{equation}
where $F(z)=\sum_{j=0}^\infty c_jz^j$ as in Theorem \ref{t4} and $B_{n,k}$ are \textit{ordinary Bell polynomials} \cite[p. 136]{Co74},
\begin{equation}
B_{m,k}(c_1,c_2,\dots,c_{m-k+1})=\sum \frac{k!}{j_1!j_2!\dots j_{m-k+1}!}c_1^{j_1} c_2^{j_2} \dots c_{m-k+1}^{j_{m-k+1}},
\end{equation}
with the sum being over all sequences $j_1,j_2,\dots,j_{m-k+1}$ of nonnegative integers such that
\begin{align}
j_1+j_2+\dots+j_{m-k+1}=k,\quad j_1+2j_2+\dots+(m-k+1)j_{m-k+1}=m.
\end{align}
This representation follows by utilizing the generating function for these polynomials to write
\begin{equation}
\bigg(\sum_{j=1}^\infty c_j z^j\bigg)^k=\sum_{m=k}^\infty B_{m,k}(c_1,c_2,\dots,c_{m-k+1})z^m,
\end{equation}
so that the series for $(d/dz)\ln\, F(z)$ is given by
\begin{align}
\dfrac{d}{dz}\ln\, F(z)&=\dfrac{d}{dz}\ln\bigg(1+c_0^{-1}\sum_{j=1}^\infty c_j z^j\bigg)=\frac{d}{dz}\sum_{k=1}^\infty \frac{(-1)^{k-1}}{k c_0^k}\bigg(\sum_{j=1}^\infty c_j z^j\bigg)^k\notag\\
&=\sum_{k=1}^\infty \frac{(-1)^{k-1}}{k c_0^k}\sum_{m=k}^\infty m B_{m,k}(c_1,c_2,\dots,c_{m-k+1})z^{m-1}.
\end{align}
Collecting the coefficients of the $m=n$ terms will now give the residue needed in Theorem \ref{t4}.\\[1mm]
$(iii)$ Other sum rules that have garnered much attention in the mathematical physics literature are the so-called `exact sum rules' (see \cite{Ka25,voros23} and the references therein).  To simplify notation, let us write $\hat c_n = c_n / c_0$ and $\hat F(z)  = F(z)/c_0$. We trivially have the formula $\hat F(z) = \exp(\ln \,   \hat F(z) )$, from which comparing Taylor coefficients on both sides we obtain the identity
\begin{equation}
    \hat c_n = \sum_{j_1 + \, \dots \, + j_k = n} \frac{1}{k!} b_{j_1} \cdots b_{j_k} = b_n + \sum_{j_1 + \, \dots \, + j_k = n, \ k \geq 2} \frac{1}{k!} b_{j_1} \cdots b_{j_k} 
\end{equation}
This yields the \textit{almost exact sum rule} 
\begin{equation}\label{2.44a}
    \zeta_S(n) = - n\frac{c_n}{c_0} + \sum_{j_1 + \, \dots \, + j_k = n, \ k \geq 2} (-1)^{\#\{j_\ell>\alpha\}}\frac{n}{k!}\prod_{j_\ell\leq\a} b_{j_\ell} \prod_{j_\ell>\a} \frac{\zeta_S(j_\ell)}{j_\ell},\quad n>\a,
\end{equation}
where $\#\{j_\ell>\alpha\}$ denotes the number of $j_{\ell}$ larger than $\a$ and each of the $b_{j_\ell}$ can be replaced by $b_{j_\ell}=-j_\ell^{-1} \zeta_S(j_\ell)$ only for $j_\ell>\a$ in general, hence the terminology `almost' exact sum rule.
Thus, it becomes apparent that to obtain a proper exact sum rule, it is \emph{sufficient} (though not necessary of course) for all the points $n\leq \a$, $n\in\bbN$, to be regular points of $\zeta_S$ as well as to express $b_{j_{\ell}}$ in terms of $\zeta_S(j_\ell)$ for $j_\ell\leq\a$ and $\hat c_n = c_n/c_0$ in terms of a sum of products each satisfying the same condition as above. As we have imposed minimal assumptions on the complex sequence $S$, \eqref{2.44a} is the best one can expect in general. We would like to point out that there are sequences $S$ and characteristic functions $F(z)$ such that Theorem \ref{t4}, specifically \eqref{2.34aa}, holds for all $n\in\bbN$ regardless of the value of $\a$ (see Remark \ref{remposint} $(i)$ and the Airy $\zeta$-function in Section \ref{s5}).

If we now assume that $c_1\neq0$ and either $\a<1$ or we are in the setting such that \eqref{2.34aa} holds for all $n\in\bbN$ described in Remark \ref{remposint} $(i)$, a somewhat canonical way of achieving a proper exact sum rule is artificially inserting $\zeta_S^n(1)$ into the first term by writing
\begin{equation}
\hat c_n=\frac{c_n}{c_0}=(-1)^{n}\frac{c_0^{n-1}c_n}{c_1^{n}} \zeta_S^n(1),\quad n\in\bbN,
\end{equation}
to obtain the \textit{universal exact sum rule}
\begin{equation}\label{2.46aa}
    \zeta_S(n) = (-1)^n n\bigg(\frac{1}{n!}-\frac{c_0^{n-1}c_n}{c_1^{n}}\bigg)\zeta_S^n(1) + \sum_{j_1 + \, \dots \, + j_k = n, \ n>k \geq 2} \frac{(-1)^kn}{k!j_1 \cdots j_k} \zeta_S(j_1) \cdots \zeta_S(j_k),\quad n\in\bbN. 
\end{equation}
Note that exact sum rules have typically been defined with real-valued coefficients (since the previous papers consider underlying sequences which are real-valued), however the coefficient of $\zeta_S^n(1)$ in \eqref{2.46aa} is possibly complex-valued in the general setting we consider here. These exact sum rules have been studied whenever the underlying sequence is the spectrum of a differential operator and, because of this, are also referred to as spectral sum rules \cite{voros23}. Importantly, our work here shows that one universal form for such sum rules is \eqref{2.46aa} (under the assumptions above) regardless of any additional assumptions on the sequence yielding a direct and efficient avenue for studying such spectral problems.

Applying the same trick to the recursion in 
\eqref{t2.25} when \eqref{2.34aa} holds for all $n\in\bbN$ we also obtain
\begin{align}\label{2.38aa}
\zeta_S(n)=(-1)^{n-1}\frac{c_0^{n-1}}{c_1^{n}}\bigg(n c_n \zeta_S^n(1)+\sum_{\ell=1}^{n-1}\Big(-\frac{c_0}{c_1}\Big)^\ell c_{n-\ell}\zeta_S^{n-\ell}(1)\zeta_S(\ell)\bigg),\quad n\in\bbN.
\end{align}
In particular, one has
\begin{align}
&\zeta_S(2)-c_1^{-2}\big(c_1^2-2c_2c_0\big)\zeta_S^2(1)=0,\quad \zeta_S(3)-\zeta_S(2)\zeta_S(1)+c_1^{-3}\big(c_2c_1c_0-3c_3 c_0^2\big)\zeta_S^3(1)=0,\dots
\end{align}

Finally, note that one can obtain further such sum rules from \eqref{2.44a}, \eqref{2.46aa}, and \eqref{2.38aa} by iteratively replacing some $\zeta_S(j_\ell)$ via the same formulas.
\end{remark}

Whenever $F(z)$ in Theorem \ref{t4} is given by the Hadamard factorization \eqref{c1}, the formulas above become much simpler due to the expansion \eqref{serieseq}.
This observation leads to the following:
\begin{corollary}\label{c2.6}
If $F(z)$ is given by the Hadamard factorization \eqref{c1} with series \eqref{serieseq}, one has
\begin{align}
    \zeta_S(n)=-\Res \left[ z^{-n}\dfrac{d}{dz}\ln\,F(z);\ z=0 \right]=-n b_n,\quad n\in\mathbb{N},\ n>\alpha,
\end{align}
where, for $k=\lfloor \a\rfloor+1$,
\begin{equation}\label{t2.39}
        b_k=c_{k}, \dots, b_{2k-1} = c_{2k-1}, \quad
        b_j=c_{j}-\sum_{\ell=k}^{j-k}\left(\dfrac{\ell}{j}\right)c_{j-\ell} b_{\ell},\quad j\geq 2k,
\end{equation}
implying 
\begin{align}
\zeta_S(n) = -nc_n \ \text{ for } \ n =k, \dots, 2k-1, \quad
\zeta_S(n)=-nc_{n} -\sum_{\ell=k}^{n-k}c_{n-\ell}\zeta_S(\ell),\quad n\in\bbN,\ n\geq 2k.
\end{align}
\end{corollary}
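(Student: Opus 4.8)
The plan is to derive Corollary \ref{c2.6} directly from Theorem \ref{t4} by exploiting the special structure of the Taylor coefficients when $F(z)$ is the Hadamard product \eqref{c1}. The key observation, already recorded in \eqref{serieseq}, is that in this case $c_0 = 1$ and $c_1 = \cdots = c_{k-1} = 0$, where $k = \lfloor \alpha \rfloor + 1$. With $c_0 = 1$, the general recursion \eqref{t2.28} of Theorem \ref{t4} simplifies to $b_1 = c_1$ and $b_j = c_j - \sum_{\ell=1}^{j-1}(\ell/j)\,c_{j-\ell} b_\ell$ for $j \geq 2$, so the only remaining work is to track which coefficients vanish.

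First I would establish by a short induction that $b_1 = \cdots = b_{k-1} = 0$: the base case $b_1 = c_1 = 0$ holds (when $k > 1$), and if $b_1 = \cdots = b_{j-1} = 0$ for some $j \leq k-1$, then every summand $c_{j-\ell} b_\ell$ vanishes and $b_j = c_j = 0$. Next I would determine which summands in the recursion for a general $j$ can survive: the factor $b_\ell$ is nonzero only for $\ell \geq k$, while the factor $c_{j-\ell}$ is nonzero only for $j - \ell \geq k$, i.e.\ $\ell \leq j - k$. Hence the effective range of summation is $k \leq \ell \leq j - k$, which is empty precisely when $j < 2k$. This yields $b_k = c_k, \dots, b_{2k-1} = c_{2k-1}$ and the reduced recursion $b_j = c_j - \sum_{\ell=k}^{j-k}(\ell/j)\,c_{j-\ell} b_\ell$ for $j \geq 2k$, which is exactly \eqref{t2.39}.

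Finally I would convert the statements about $b_n$ into the claimed formulas for $\zeta_S(n)$. Since any integer $n > \alpha$ satisfies $n \geq k$, Theorem \ref{t4} gives $\zeta_S(n) = -n b_n$; in particular $\zeta_S(n) = -n c_n$ for $k \leq n \leq 2k-1$. For $n \geq 2k$, multiplying the reduced recursion by $-n$ gives $\zeta_S(n) = -n c_n + \sum_{\ell=k}^{n-k} \ell\, c_{n-\ell} b_\ell$, and since each index $\ell$ in this sum again satisfies $\ell \geq k > \alpha$, I may substitute $\ell b_\ell = -\zeta_S(\ell)$ to obtain the stated recurrence. There is no genuine analytic obstacle here, as the corollary is purely a combinatorial simplification of Theorem \ref{t4}; the one point demanding care is the index bookkeeping, namely verifying that the joint vanishing conditions $\ell \geq k$ and $j - \ell \geq k$ collapse the general sum to the range $k \leq \ell \leq j-k$ and correctly force an empty sum (hence $b_j = c_j$) throughout the intermediate block $k \leq j \leq 2k - 1$.
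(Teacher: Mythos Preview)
Your proposal is correct and follows essentially the same approach as the paper's proof. The only cosmetic difference is that the paper justifies $b_j = c_j$ for $k \leq j \leq 2k-1$ by observing directly from the logarithmic series $\ln(1+\sum_{j\geq k} c_j z^j) = \sum_{j\geq k} c_j z^j + O(z^{2k})$ that the quadratic and higher corrections enter only at order $z^{2k}$, whereas you reach the same conclusion by tracking vanishing indices in the recursion \eqref{t2.28}; both arguments are equivalent and rest on the same input, namely $c_0=1$ and $c_1=\cdots=c_{k-1}=0$ from \eqref{serieseq}.
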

\begin{proof}
Follows as the previous proof by utilizing \eqref{serieseq} and the fact that
\begin{equation}
\log\bigg(1+\sum_{j = k}^\infty c_j z^j\bigg) = \sum_{j = k}^\infty c_j z^j - \frac{1}{2}\underbrace{\bigg(\sum_{j = k}^\infty c_j z^j\bigg)^2}_{O(z^{2k})}+\dots,
\end{equation}
implies $b_j = c_j$ for all $j$ up to and including $j = 2k-1$.
\end{proof}

Since the Hadamard factorization $H(z)$ plays a special role in the integral representation of the $\z$-function, we would like to provide a method of finding such a Hadamard factorization of the same order given a characteristic function $F(z)$. Let $F(z)$ be an entire function with roots at the points of the sequence $S$, then according to Remark \ref{rem2.3} one can write $ F(z)=e^{\omega(z)} H(z)$ with $\omega(z)$ a polynomial such that $\textrm{deg}(\omega)\leq\lfloor\alpha\rfloor$ and $H(z)$ the corresponding Hadamard factorization of the same order as $F(z)$. Then
\begin{align}\label{2.42a}
    \ln\,F(z)=C_{\lfloor\alpha\rfloor}z^{\lfloor\alpha\rfloor}+C_{\lfloor\alpha\rfloor-1}z^{\lfloor\alpha\rfloor-1}+\cdots+C_1z+C_0+\ln\,H(z).
\end{align}
We know that for small values of $|z|$, $F(z)=\sum_{j=0}^\infty c_jz^j,$ and hence
\begin{align}
    \ln\,F(z)=\ln\,c_0+\sum_{n=1}^{\infty}b_{n}z^{n},
\end{align}
with the coefficients $b_{n}$ given in \eqref{t2.28}. On the other hand, by \eqref{serieseq} one has $H(z)=1+\sum_{j=\lfloor\alpha\rfloor+1}^\infty d_jz^j$ and therefore
\begin{align}
    \ln\,H(z)=\sum_{m=\lfloor\alpha\rfloor+1}B_{m}z^{m}, 
\end{align}
with the coefficients $B_{m}$ given by \eqref{t2.39}.
By substituting the last two expressions in \eqref{2.42a} and comparing the coefficients of the series on both sides of the ensuing equation we obtain
\begin{align}
    C_{0}=\ln\,c_0,\quad C_{1}=b_1, \ \ldots, C_{\lfloor\alpha\rfloor}=b_{\lfloor\alpha\rfloor},\quad B_{m}=b_{m},\quad m\geq\lfloor\alpha\rfloor+1.
\end{align}
This implies that given a characteristic function $F(z)$ of order $\alpha$ with prescribed zeros, the corresponding Hadamard factorization $H(z)$ of the same order is 
\begin{align}
    H(z)=c_0^{-1}F(z)\exp\left\{-\sum_{m=1}^{\lfloor\alpha\rfloor}b_{m}z^{m}\right\}.
\end{align}

\begin{remark}\label{remposint}
$(i)$ As previously mentioned, while formulas of the form \eqref{2.30a} and \eqref{2.30aa} have often been used in the literature, the representation of the $\z$-function in \eqref{2.28a} is actually quite illuminating. We recall that if we set $s=n>\a$, $n\in\bbN$, the branch cut is no longer needed so that the first integral in \eqref{2.28a} converges and vanishes due to the $\sin(\pi s)$ prefactor. The second integral is now simply an integral around a small circle about the origin, yielding Theorem \ref{t4}.

More importantly, we would like to add that the second integral in \eqref{2.28a} always reduces to this form for any $s=n$, $n\in\bbN$, and therefore it always contributes with a factor $-nb_n$ (see \eqref{t2.28}) to the potential value of $\zeta_S(n)$ for all $n\in\bbN$, regardless of the value of $\alpha$ (note, however, that this second integral is zero for $s=-n$, $n\in\bbN_0$). Now, whether $\zeta_S(n)$ for $n\leq\alpha$ is singular or not (and its exact value if it is, in fact, regular), will depend also on the large-$z$ asymptotics of $\frac{d}{dt}\ln\, F(z)$ appearing in the integrand of the first integral in \eqref{2.28a}. This dependence manifests itself through the analytic continuation described in the next section.

It is, then, interesting to note that it is possible, after a proper analytic continuation, for the first integral in \eqref{2.28a} to give no contribution to the value of $\z_{S}(n)$ for $1\leq n\leq\alpha$ leaving only the one coming from the second integral in \eqref{2.28a}. 
This possibility is absolutely fascinating since, in this case, Theorem \ref{t4} would in fact hold for all $n\in\bbN$, regardless of the value of $\alpha$ needed for the convergence of the sum! This is indeed the case for the Airy $\zeta$-function analyzed in Section \ref{Sect:Airy_Zeros} with $F(z)=\Ai(-z)$. 

If, on the other hand, after a proper analytic continuation, the first integral in \eqref{2.28a} yields a non-vanishing contribution to the value of $\zeta_S(n)$ for $1\leq n\leq\alpha$ (as long as it is a regular point of $\zeta_S(s)$), then one could still extend the results of Theorem \ref{t4} to the range $1\leq n\leq\alpha$. However, those values would have to be modified by the addition of the non-vanishing contribution coming from the analytically continued first integral.
Examples of such behavior are provided by the $\zeta$-functions associated with the zeros of the parabolic cylinder function at $s=2$ and the confluent hypergeometric function at $s=1$ discussed in Section \ref{s5}.
We point out that Theorem \ref{t3.7} illustrates this fact under a natural large-$z$ asymptotic expansion assumption of the logarithm of the characteristic function.\\[1mm]
$(ii)$ As a final intriguing observation, we point out that if $F(z)$ is given by the Hadamard factorization \eqref{c1}, the first non-vanishing term in its series expansion \eqref{serieseq} is the one corresponding to the $\lfloor\a\rfloor+1$ power of $z$. In particular, this implies that $b_j=0$ for $j\in\{1,\dots,\lfloor\a\rfloor\}$ so that, in this case, the second integral in \eqref{2.28a} will always be zero for $n<\alpha$. Therefore when $F(z)$ is given by the Hadamard factorization \eqref{c1}, any nonzero contribution to the value $\zeta_S(n)$ for $n<\a$ will come solely from the analytic continuation of the first integral in \eqref{2.28a}. Yet again this shows the peculiarities that become apparent when working with the Hadamard factorization form.
\end{remark}
\subsection{The connection to regularized Fredholm determinants and traces}\label{s6}
One of the most important applications of the contour integral approach to $\z$-functions can be found in spectral theory. 
In fact, one can define the $\z$-function associated with a self-adjoint operator $\cK$ that is densely defined in the Hilbert space $\cH$, bounded from below, and has a discrete spectrum. The ensuing spectral $\z$-function is well defined since the spectrum of this class of operators satisfies the assumptions of Definition \ref{defzeta}.   
 
In order to construct an integral representation for the spectral $\z$-function, we need to first describe a property of the resolvent operator $(\cK-z I)^{-1}$ which plays an important role in the determination of a suitable function whose zeros coincide with the eigenvalues of $\mathcal{K}$. Let $\cB_{p}(\cH)$, $1\leq p< \infty$, represent the space of $p$-Schatten–von Neumann class operators (consult \cite{Sim05} for a survey on the subject). The following result shows that if the sequence of eigenvalues of $\cK$ has exponent of convergence less than $p$, then $(\cK-z I)^{-1}$ is a $p$-Schatten–von Neumann class operator.  
\begin{lemma}\label{lemma2}
Let $(\cK-z I)^{-1}$ with $z\in\rho(\cK)$ be the resolvent of the self-adjoint operator $\cK$, densely defined in $\cH$, bounded from below, and with discrete spectrum $\lambda_n\in\sigma(\cK)$.
If for $p>0$ the spectrum satisfies
\begin{align}\label{seriesconv}
    \sum_{\underset{\lambda_n \not = 0}{n=1}}^\infty \lambda_n^{-p}<\infty,
\end{align}
and $0$ is an eigenvalue of at most finite multiplicity, then $(\cK-z I)^{-1}\in \cB_{p}(\cH)$ for $z\in\rho(\cK)$.
\end{lemma}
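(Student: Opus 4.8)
The plan is to connect the $p$-Schatten--von Neumann class membership of the resolvent directly to the convergence of the series $\sum_n \lambda_n^{-p}$ via the singular values of $(\cK - zI)^{-1}$. Recall that a compact operator $T$ belongs to $\cB_p(\cH)$ if and only if its sequence of singular values $\{s_j(T)\}_{j\in\N}$ lies in $\ell^p$, i.e. $\sum_j s_j(T)^p < \infty$. Since $\cK$ is self-adjoint with discrete spectrum $\{\lambda_n\}$, the first step is to invoke the spectral theorem to diagonalize $\cK$ and hence $(\cK - zI)^{-1}$. In an orthonormal eigenbasis, the resolvent acts diagonally with eigenvalues $(\lambda_n - z)^{-1}$, so its singular values are precisely $\{|\lambda_n - z|^{-1}\}_{n\in\N}$ (suitably reordered in decreasing magnitude). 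The finite-multiplicity assumption on $0 \in \sigma(\cK)$ guarantees that removing the zero eigenvalue (which need not be inverted, as $z \in \rho(\cK)$ means $z \neq \lambda_n$) only affects finitely many terms and thus does not alter the convergence of any infinite series.

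First I would establish that $(\cK - zI)^{-1} \in \cB_p(\cH)$ is equivalent to the finiteness of $\sum_{n} |\lambda_n - z|^{-p}$. Then the core of the argument reduces to showing that
\begin{align}
\sum_{n=1}^\infty |\lambda_n - z|^{-p} < \infty \quad \Longleftrightarrow \quad \sum_{\underset{\lambda_n \neq 0}{n=1}}^\infty \lambda_n^{-p} < \infty.
\end{align}
This is a straightforward comparison: since $|\lambda_n| \to \infty$ by assumption $(i)$ of Definition \ref{defzeta} applied to the spectrum, for $z \in \rho(\cK)$ fixed we have $|\lambda_n - z| / |\lambda_n| \to 1$ as $n \to \infty$, so the two sequences $|\lambda_n - z|^{-p}$ and $|\lambda_n|^{-p}$ differ by a factor bounded above and below by positive constants for all large $n$. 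Hence the tails of the two series converge or diverge together, and only the finitely many small-index or zero eigenvalue terms need separate attention, which is handled by the finite-multiplicity hypothesis on the eigenvalue $0$.

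The main obstacle, though a minor one, will be bookkeeping the reordering of singular values versus eigenvalues carefully, since the singular value sequence is by definition arranged in nonincreasing order while the eigenvalues $\lambda_n$ are ordered by the convention in Definition \ref{defzeta}; however, because $\ell^p$-summability is invariant under permutation of a nonnegative sequence, this reordering causes no genuine difficulty. One should also note that the $\lambda_n$ being eigenvalues of a self-adjoint operator are real, so $|\lambda_n|^{-p} = |\lambda_n^{-p}|$ and the hypothesis \eqref{seriesconv} is exactly the statement that $\{|\lambda_n|^{-1}\}_{\lambda_n \neq 0} \in \ell^p$. Assembling these observations, the equivalence of the two series yields the $\ell^p$-summability of the singular values of $(\cK - zI)^{-1}$, and therefore $(\cK - zI)^{-1} \in \cB_p(\cH)$ for every $z \in \rho(\cK)$, as claimed.
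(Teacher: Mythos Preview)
Your argument is correct and follows essentially the same route as the paper: identify the singular values of $(\cK - zI)^{-1}$ as $\{|\lambda_n - z|^{-1}\}$ via the spectral theorem, then compare $\sum_n |\lambda_n - z|^{-p}$ with the given series $\sum_{\lambda_n \neq 0} \lambda_n^{-p}$. The only cosmetic difference is that the paper first reduces to a single real point $x_0 < \inf\sigma(\cK)$ (so that $(\cK - x_0 I)^{-1}$ is self-adjoint and all $\lambda_n - x_0 > 0$) and then invokes the general fact that $\cB_p$-membership of the resolvent at one point in $\rho(\cK)$ implies it at all points, whereas you treat arbitrary $z \in \rho(\cK)$ directly and make the asymptotic comparison $|\lambda_n - z|/|\lambda_n| \to 1$ explicit.
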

\begin{proof}
It is sufficient to prove that $(\cK-z I)^{-1}\in \cB_{p}(\cH)$ for one particular point in the resolvent of $\cK$ since the same will be true for all $z\in\rho(\cK)$. First we note that since $\cK$ is self-adjoint, $[(\cK-z I)^{-1}]^{\ast}=(\cK-\bar{z} I)^{-1}$. Since $\cK$ is bounded from below, $\rho(\cK)\cap\R\neq \emptyset$, so there exists $x\in\R$ such that $[(\cK-x I)^{-1}]^{\ast}=(\cK-x I)^{-1}$. The spectrum of $(\cK-x I)^{-1}$ is then \cite[Ch.~8]{Dav07}
\begin{align}
    \sigma[(\cK-x I)^{-1}]=\{0\}\cup\{(\lambda_n-x)^{-1}:\lambda_n\in\sigma(\cK)\},
\end{align}
and since $(\cK-x I)^{-1}$ is self-adjoint, it coincides with its singular values. By choosing $x_0\in\rho(\cK)\cap\R$ such that  $x_0<\inf\sigma(\cK)$ we then have that the $p$-Schatten–von Neumann norm 
\begin{align}
    \Norm{(\cK-x_0 I)^{-1}}_{\cB_p}=\left(\sum_{n=1}^\infty[(\lambda_n-x_0)^{-1}]^{p}\right)^{1/p}<\infty,
\end{align}
by virtue of the convergence of the series \eqref{seriesconv}.
\end{proof}   

To construct the spectral $\z$-function of the operator $\cK$ (densely defined in $\cH$, bounded from below, and with a discrete spectrum having exponent of convergence $\alpha$) in terms of an integral representation of the form \eqref{2.17}, we introduce the operator-valued function (see e.g. \cite{GK19} and \cite[Ch.~12]{GNZ23}) 
\begin{align}\label{family}
    A(z)=(z-z_0)(\cK-z_0I)^{-1},\quad z\in\C,\quad z_0\in\rho(\cK).
\end{align}
Let $k\in \N$ be the positive integer so that $k-1\leq \alpha<k$. Since $A(z)\in\cB_{k}(\cH)$ for $z\in\C$, which is an immediate consequence of Lemma \ref{lemma2}, we can express the regularized (modified) Fredholm determinant of the operator $I-A(z)$ as a Weierstrass-type product as follows \cite[Thm. 9.2]{Sim05} 
\begin{align}\label{determinant}
    \textrm{det}_{k}\left(I-A(z)\right)=\prod_{n=1}^{\infty}\left(\frac{\lambda_n-z}{\lambda_n-z_0}\right)\exp\left\{\sum_{j=1}^{k-1}j^{-1}\left(\frac{z-z_0}{\lambda_n-z_0}\right)^{j}\right\}.
\end{align}
The function $\textrm{det}_{k}\left(I-A(z)\right)$ is entire in $z\in\C$ with isolated zeros at the points $\lambda_k\in\sigma(\cK)$ \cite[Ch.~4, Sec.~2]{GK69}. 
The regularized Fredholm determinant in \eqref{determinant} has the same form as the characteristic function $H(z)$ defined in \eqref{c1}. This allows us to utilize the proof of Lemma \ref{l2.2}, with the replacement $a_n\to\lambda_n-z_0$ and $z\to z-z_0$, to obtain the following 
\begin{lemma}\label{l5.2}
  Let $A(z)\in\cB_{k}(\cH)$  be the the family of operators defined in \eqref{family} and let $\textrm{det}_{k}\left(I-A(z)\right)$ be defined as in \eqref{determinant}. Then for each real $p$, with $\alpha<p\leq k$, and $\delta>0$ 
  \begin{align}
      \left|\frac{d}{dz}\ln\,\textup{det}_{k}\left(I-A(z)\right)\right|\leq C(\delta,p)|z-z_0|^{p-1},
  \end{align}
for $z\in\C\backslash \mathcal{B}_{\delta}$ with $\mathcal{B}_{\delta}$ given by \eqref{2.11} with the replacement $a_n\to\lambda_n$.  
\end{lemma}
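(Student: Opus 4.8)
The plan is to reduce Lemma \ref{l5.2} to Lemma \ref{l2.2} by exploiting the structural identity between the regularized Fredholm determinant \eqref{determinant} and the Hadamard characteristic function \eqref{c1}. Indeed, comparing the Weierstrass-type product for $\textup{det}_k(I-A(z))$ with the factorization of $H(z)$, I observe that under the substitution $a_n \mapsto \lambda_n - z_0$ and $z \mapsto z - z_0$, the two products coincide term by term: each factor $\left(\frac{\lambda_n - z}{\lambda_n - z_0}\right)\exp\left\{\sum_{j=1}^{k-1}j^{-1}\left(\frac{z-z_0}{\lambda_n - z_0}\right)^j\right\}$ becomes exactly $E\left(\frac{w}{b_n}, k-1\right)$ with $w = z - z_0$ and $b_n = \lambda_n - z_0$. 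Thus $\textup{det}_k(I-A(z)) = \widetilde H(z - z_0)$, where $\widetilde H$ is the Hadamard characteristic function \eqref{c1} associated to the shifted sequence $\{\lambda_n - z_0\}_{n\in\N}$.

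First I would verify that the shifted sequence $\{\lambda_n - z_0\}_{n\in\N}$ satisfies the hypotheses needed to invoke Lemma \ref{l2.2}. Since $z_0 \in \rho(\cK)$, none of the $\lambda_n - z_0$ vanish, so condition $(i)$ holds after reindexing by modulus; moreover, a finite shift does not change the exponent of convergence, so $\limsup_{n\to\infty}(\ln n)/(\ln|\lambda_n - z_0|) = \alpha$ remains finite, giving condition $(ii)$. Crucially, the integer $k$ with $k-1 \leq \alpha < k$ is unchanged, so the Weierstrass genus used in \eqref{determinant} matches the one in \eqref{c1}, and the hypothesis \eqref{seriesconv} guarantees $\sum_n |\lambda_n - z_0|^{-p} < \infty$ for the relevant range $\alpha < p \leq k$ exactly as required in the proof of Lemma \ref{l2.2}.

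Next I would apply Lemma \ref{l2.2} directly to $\widetilde H$, obtaining a constant $\widetilde A(\delta, p) > 0$ such that
\begin{align}
    \left|\frac{d}{dw}\ln\,\widetilde H(w)\right| \leq \widetilde A(\delta, p)\,|w|^{p-1}
\end{align}
for $w \in \C \setminus \widetilde{\mathcal B}_\delta$, where $\widetilde{\mathcal B}_\delta = \bigcup_{n}\{w : |(\lambda_n - z_0) - w| \leq \delta|\lambda_n - z_0|\}$. Setting $w = z - z_0$ and using the chain rule $\frac{d}{dz}\ln\,\textup{det}_k(I-A(z)) = \frac{d}{dw}\ln\,\widetilde H(w)\big|_{w=z-z_0}$, I recover precisely the desired bound with $C(\delta, p) = \widetilde A(\delta, p)$ and $|z - z_0|^{p-1}$ on the right-hand side. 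The proof of Lemma \ref{l2.2} did not use condition $(iii)$ of Definition \ref{defzeta} (the sector assumption), so its applicability to the shifted sequence is unaffected.

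The only point requiring care — and the step I expect to be the mild obstacle — is matching the exceptional region: the lemma as stated excludes $\mathcal B_\delta$ built from $\{\lambda_n\}$ (not from the shifted points), so I must confirm that the translated region $\widetilde{\mathcal B}_\delta$ in the $w$-plane corresponds to the region $\mathcal B_\delta$ from \eqref{2.11} with $a_n \mapsto \lambda_n$ in the $z$-plane. Concretely, $z \in \C \setminus \mathcal B_\delta$ with $\mathcal B_\delta = \bigcup_n\{z : |\lambda_n - z| \leq \delta|\lambda_n|\}$ is not literally the same as $w = z - z_0 \notin \widetilde{\mathcal B}_\delta$, because the radii are scaled by $|\lambda_n - z_0|$ rather than $|\lambda_n|$. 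However, since $z_0$ is fixed and $|\lambda_n| \to \infty$, the ratio $|\lambda_n - z_0|/|\lambda_n| \to 1$, so after possibly shrinking $\delta$ and absorbing finitely many indices into the constant, the two excluded regions are comparable; this only affects the value of $C(\delta, p)$, which is precisely what the statement allows. I would handle this comparison in a sentence, noting that it introduces no essential difficulty beyond adjusting the constant.
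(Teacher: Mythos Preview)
Your proposal is correct and follows essentially the same approach as the paper: the paper's proof consists of a single sentence noting that \eqref{determinant} has the same form as \eqref{c1} and that Lemma \ref{l2.2} applies verbatim under the substitution $a_n \to \lambda_n - z_0$, $z \to z - z_0$. You are in fact more careful than the paper on one point: the discrepancy you flag between the excluded region $\widetilde{\mathcal B}_\delta$ (radii $\delta|\lambda_n - z_0|$) produced by the substitution and the region $\mathcal B_\delta$ (radii $\delta|\lambda_n|$) appearing in the statement is real, and the paper simply glosses over it; your observation that the two are comparable up to adjusting $\delta$ and the constant is the right way to close that gap.
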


By using the results of the previous Lemma, one can prove a theorem analogous to Theorem \ref{Theorem_Integral} for the spectral $\z$-function $\zeta_\cK(s)$ (in fact, that proof holds true with the replacement $a_n\to\lambda_n$ and the translation $z\to z-z_0$).
We can therefore conclude that if $\cK$ is a self-adjoint operator which is bounded from below and whose spectrum is discrete and has exponent of convergence $\alpha$, then the spectral $\z$-function associated with $\cK$ has the following integral representation
\begin{align}\label{c10}
    \zeta_\cK(s)= \sum_{n = 1}^\infty \lambda_n^{-s} = \frac{1}{2\pi i}\int_{\gamma}dz\,z^{-s}\left[\frac{d}{dz}\ln\,\textrm{det}_{k}\left(I-A(z)\right)-m_
    0z^{-1}\Big(\frac{z_0-z}{z_0}\Big)^{k-1}\right],
\end{align}
valid for $\Re(s)>\alpha$ where $m_0$ is the multiplicity of the zero eigenvalue and $\gamma$ is a contour that encloses all the eigenvalues $\{\lambda_n\}_{n\in\N}$ in the counterclockwise direction while dipping below, hence not enclosing, the origin. We would like to point out that the logarithmic derivative appearing in the integrand of \eqref{c10} can be written in terms of the trace of a suitable trace-class operator-valued function. In fact, according to \cite[Ch.~1, Sec.~7]{Yaf92} one has
\begin{align}
 \frac{d}{dz}\ln\,\textrm{det}_{k}\left(I-A(z)\right)=-\textrm{tr}_{\cH}\left[(I-A(z))^{-1}A(z)^{k-1}A'(z)\right],   
\end{align}
and by substituting $A(z)$ in \eqref{family} we obtain (cf. \cite[Sec.~12.2]{GNZ23})
\begin{align}
 \frac{d}{dz}\ln\,\textrm{det}_{k}\left(I-A(z)\right)=-\textrm{tr}_{\cH}\left[(\cK-z I)^{-1}A(z)^{k-1}\right].  
\end{align}
This last expression allows us to rewrite the integral representation \eqref{c10} as 
\begin{align}\label{c11}
    \zeta_\cK(s)=-\frac{1}{2\pi i}\int_{\gamma}dz\,z^{-s}\left\{\textrm{tr}_{\cH}\left[(\cK-z I)^{-1}A(z)^{k-1}\right]+m_0z^{-1}\Big(\frac{z_0-z}{z_0}\Big)^{k-1}\right\}.
\end{align}

\begin{remark}
Note that the family of operators $A(z)$ in \eqref{family} depends of the point $z_0\in\rho(\cK)$ which was introduced in order to have a well-defined notion of the determinant of an unbounded operator \cite{GW95}. However, as expected, the integral representation of the spectral $\z$-function in \eqref{c11} does not depend on $z_0\in\rho(\cK)$. This can be seen by considering \eqref{c10} and then applying Remark \ref{rem2.3} to \eqref{determinant} with
\begin{equation}
e^{\omega(z)}=\prod_{n=1}^{\infty}\left(\frac{\lambda_n}{\lambda_n-z_0}\right)\exp\left\{\sum_{j=1}^{k-1}j^{-1}\left[\left(\frac{z-z_0}{\lambda_n-z_0}\right)^{j}-\left(\frac{z}{\lambda_n}\right)^j\right]\right\}.
\end{equation}
Importantly, this illustrates that in the process of  constructing the $\zeta$-function associated with such an operator, one can utilize the simplest form of the characteristic function available rather than the full form of the associated regularized Fredholm determinant. 

For additional results connecting $\zeta$-regularized and Fredholm determinants, we refer to \cite{GK19,Har22}.
\end{remark}

\section{Analytic continuation}\label{continuation}
We now turn to studying the analytic continuation of the $\zeta$-function. From \eqref{2.28a}, it is easy to note that the restriction $\Re(s)>\alpha$ is due to the large-$t$ behavior of the integrand $\ln\,F\left(te^{i\Psi}\right)$ while the integral over the circle of radius $R$ is an entire function of $s\in\C$.  
In order to perform the analytic continuation to a region of the complex plane where $\Re(s)\leq\alpha$, one can subtract, and then add, from the integrand of the first integral in \eqref{2.28a} the large-$t$ asymptotic expansion of $\ln\,F\left(te^{i\Psi}\right)$. This process will move the abscissa of convergence to the left of $\Re(s)=\alpha$. The more terms of the asymptotic expansion are subtracted, the more the abscissa of convergence moves to the left.  

To further outline the process of analytic continuation we need to make some basic assumptions regarding the large-$z$ asymptotic expansion of $\ln\,F(z)$. 
In particular, we assume the following asymptotic expansion which is often used in the spectral $\z$-function literature (see, e.g., \cite[Sec.~2.1]{Sp06}).

\begin{assumption}\label{assump2.9}
Assume that the sequence $\{a_n\}_{n\in\bbN}$ satisfies Definition \ref{defzeta}. Choose a branch of the function $\ln \, F(z)$ which is holomorphic in the sector $\mathcal I_\varepsilon^c$ $($cf.~Fig.~\ref{Fig:Contour}$)$ and assume it satisfies the large-$z$ asymptotic expansion
\begin{equation}\label{2.42}
\ln\,F(z)= \sum_{j=0}^N  \sum_{k=0}^{M} d_{j,k} z^{\alpha-(j/m)}\ln^{k}z+o\big(z^{\alpha-(N/m)-\delta}\big),\quad z\to\infty, \ \ z \in \mathcal I_{\varepsilon}^c,
\end{equation}
for some $m,M,N\in\mathbb{N}$ and $\delta>0$ such that $N/m>\alpha-\delta$ in the sector away from the sequence $\{a_n\}_{n\in\bbN}$.  
\end{assumption}
\begin{remark}\label{rem3.2}
$(i)$ Throughout this work, we choose $\ln \, F(z)$ to define a holomorphic function in $\mathcal I_\varepsilon^c$. As $F(z)$ does not vanish in that sector, this is always possible. In particular, $\ln \, F(0)$ in Theorem \ref{t3.5}  will denote the limit of $\ln \, F(z)$ as $z \to 0$ inside $\mathcal I_\varepsilon^c$.  
\\[1mm]
$(ii)$ The examples considered in Section \ref{s5} satisfy such asymptotics. Furthermore, under certain assumptions, characteristic functions associated with Sturm--Liouville operators admit such a representation with $\alpha=1/2$, $m=2$, and $M=1$ for every $N\in\mathbb{N}$ (see for instance \cite[Sec.~3]{FGKS21} and \cite[Sec.~4]{FPS25}).\\[1mm]
$(iii)$ More generally, one can replace the powers and logarithms in the asymptotic expansion \eqref{2.42} with a more general asymptotic sequence of functions endowed with an integrability assumption (see, e.g., \cite[Assumption 4.2]{FPS25}). The process described below can then be followed to investigate the structure of the analytic continuation of $\zeta_S(s)$. We refer to \cite[Sec.~5]{FPS25}, \cite{FS25}, and \cite{Ki06} for explicit examples of such problems, including those with generalized exponential integrals appearing in the asymptotic expansion of the characteristic function (leading to logarithmic branch points). In order to give explicit formulas to illustrate the method below, we choose to make the simpler assumption \eqref{2.42} in this work.
\end{remark}

Our next result reveals the pole structure of $\zeta_S(s)$ and its proof illustrates explicitly the role played by the asymptotic expansion \eqref{2.42} in the process of analytic continuation briefly described above.  
\begin{theorem}\label{t2.11}
Let Assumption \ref{assump2.9} hold and let $\bar{k}=\max\{0,\dots,M\}$ such that  $d_{j,\bar{k}}\neq 0$. Then the following holds regarding the pole structure of $\zeta_S(s)$$:$ 
\begin{equation}
\alpha-(j/m) =\begin{array}{l}
\begin{cases}
\text{pole of order $\bar{k}+1$}, & \text{if } (\alpha-(j/m))\notin\mathbb{Z},\\
\text{pole of order $\bar{k}$}, & \text{if }  (\alpha-(j/m))\in\mathbb{Z}\backslash \{0\},
\end{cases}
\end{array}
\end{equation}
where a pole of order $0$ is a regular point. 

Moreover, $s=0$ is a regular point for $\zeta_S(s)$ whenever $M\in \{0,1\}$. In this case we have
\begin{align}\label{2.56}
\zeta_S(0)&=\begin{cases}
0, & \text{if }\alpha-(j/m)\neq0 \text{ for any } j\in\{0,1,\dots,N\},\\
d_{j',1},& \text{if }\alpha-(j'/m)=0 \text{ for some } j'\in\{0,1,\dots,N\},
\end{cases}
\end{align}
noting that for $M=0$, one must have $d_{j',1}=0$.
For $M\geq 2$, $\zeta_S(s)$ develops, in general, a pole of order $M-1$ at $s=0$.
\end{theorem}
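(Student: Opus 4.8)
The plan is to read off the entire pole structure from the deformed representation \eqref{2.28a}. As already observed at the start of this section, the integral over $C_R$ defines an entire function of $s$; moreover, since $R<|a_1|$ the integrand $\frac{d}{dz}\ln\,F(z)$ is holomorphic inside $C_R$, so at $s=0$ this circle integral vanishes by Cauchy's theorem. Consequently every pole of $\zeta_S(s)$, as well as its value at $s=0$, is produced by the first integral
\[
I(s)=\int_R^\infty dt\, t^{-s}\frac{d}{dt}\ln\,F\!\left(te^{i\Psi}\right)
\]
together with the prefactor $e^{is(\pi-\Psi)}\sin(\pi s)/\pi$. First I would analytically continue $I(s)$ by the standard subtract-and-add device: write $\ln\,F(te^{i\Psi})$ as its asymptotic expansion \eqref{2.42} plus the remainder, differentiate, and split $I(s)$ accordingly. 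The remainder piece, whose integrand is $o(t^{\alpha-(N/m)-\delta-1})$, extends holomorphically to $\Re(s)>\alpha-(N/m)-\delta$, and after multiplication by the entire prefactor it contributes nothing to the poles in this strip (and vanishes at every integer because of the $\sin(\pi s)$ factor). Thus the singular behaviour is captured entirely by the finitely many explicit model integrals coming from the expansion.

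Next I would evaluate those model integrals term by term. Writing $\beta=\alpha-(j/m)$ and using $\ln\,(te^{i\Psi})=\ln t+i\Psi$, differentiation of $d_{j,k}\,z^{\beta}\ln^k z$ produces $t^{\beta-1}$ times a polynomial in $\ln t$ whose degree equals $k$ when $\beta\neq0$ but drops to $k-1$ when $\beta=0$, since the derivative annihilates the pure power. Integrating the resulting monomials against $t^{-s}$ via $\int_R^\infty t^{\beta-s-1}\ln^{\ell}t\,dt=(-1)^{\ell}\frac{d^\ell}{ds^\ell}\!\big[R^{\beta-s}/(s-\beta)\big]$ yields a meromorphic function with a pole of order $\ell+1$ at $s=\beta$. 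Hence, selecting the largest $k$ with $d_{j,\bar k}\neq0$, the term at $s=\alpha-(j/m)$ gives $I(s)$ a pole of order $\bar k+1$ when $\beta\neq0$ and of order $\bar k$ when $\beta=0$.

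The announced dichotomy then follows upon multiplying by $\sin(\pi s)/\pi$, whose simple zeros at the integers lower the order by one precisely when $\alpha-(j/m)\in\bbZ$: order $\bar k+1$ off the integers, order $\bar k$ at nonzero integers, and order $\bar k-1$ at $s=0$, which for $\bar k=M$ gives the stated pole of order $M-1$ and, for $M\in\{0,1\}$, a regular point. The remaining task, which I expect to be the delicate one, is the evaluation at $s=0$ when $M\in\{0,1\}$. Only the $\beta=0$ terms can contribute there; the $k=0$ term is a constant and is killed by $\frac{d}{dt}$, while the $k=1$ term $d_{j',1}\ln\,(te^{i\Psi})$ differentiates to $d_{j',1}/t$ and integrates to $d_{j',1}R^{-s}/s$, a simple pole at $s=0$ with residue $d_{j',1}$. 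Multiplying by $e^{is(\pi-\Psi)}\sin(\pi s)/\pi$, which behaves like $s$ near $s=0$, cancels the pole and leaves exactly the finite value $d_{j',1}$ (and $0$ when no $j'$ satisfies $\alpha-(j'/m)=0$), yielding \eqref{2.56}; for $M=0$ the absence of a $k=1$ term forces $d_{j',1}=0$.

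The hard part throughout will be the careful bookkeeping rather than any single estimate: I must justify term-by-term differentiation of \eqref{2.42}, so that the differentiated remainder is still $o(t^{\alpha-(N/m)-\delta-1})$ and hence integrable in the claimed strip, and I must track precisely how the simple zeros of $\sin(\pi s)$ interact with the pole orders. The subtle point is the anomalous one-unit drop at $s=0$, which stems entirely from $\frac{d}{dt}$ lowering the logarithmic degree of every $\beta=0$ term; keeping the branch bookkeeping $\ln\,(te^{i\Psi})=\ln t+i\Psi$ consistent across differentiation and integration is what makes the explicit value $d_{j',1}$ emerge cleanly.
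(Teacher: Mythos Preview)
Your approach is essentially the same as the paper's: both subtract and add the asymptotic expansion inside the first integral of \eqref{2.28a}, identify the remainder as holomorphic in the desired strip, and read off the poles from the finitely many explicit terms together with the $\sin(\pi s)$ prefactor. The only cosmetic difference is that you compute the model integrals via $\int_R^\infty t^{\beta-s-1}\ln^{\ell}t\,dt=(-1)^{\ell}\frac{d^\ell}{ds^\ell}\big[R^{\beta-s}/(s-\beta)\big]$, whereas the paper uses the repeated integration-by-parts identity \eqref{intbyparts}; both yield the same pole orders, and your explicit observation that $\frac{d}{dt}$ drops the logarithmic degree by one when $\beta=0$ is exactly the mechanism behind the paper's one-unit reduction at $s=0$.
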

\begin{proof}
Utilizing the substitution $z=t e^{i\Psi}$, then
adding and subtracting the finitely many terms from \eqref{2.42} into the first integral in \eqref{2.28a} yields a representation which is valid in the region $\Re(s)>\alpha-(N/m)-\delta$:
\begin{align}\label{2.43}
&\zeta_S(s)=e^{is(\pi-\Psi)}\frac{\sin(\pi s)}{\pi}\int_R^{\infty}dt\, t^{-s}\frac{d}{dt}\left[\ln\,F\left(te^{i\Psi}\right)-\sum_{j=0}^N  \sum_{k=0}^{M} d_{j,k} e^{i(\alpha-(j/m)) \Psi} t^{\alpha-(j/m)}\ln^{k}\left(te^{i\Psi}\right)\right] \notag\\
& +e^{is(\pi-\Psi)}\frac{\sin(\pi s)}{\pi}\sum_{j=0}^N  \sum_{k=0}^{M} d_{j,k} e^{i(\alpha-(j/m)) \Psi}\int_R^\infty dt\, t ^{-s+\alpha-(j/m)-1}\ln^{k-1}\left(te^{i\Psi}\right)\left[k+(\alpha-(j/m))\ln\left(te^{i\Psi}\right)\right]\notag \\
&\quad-\frac{R^{-s}}{2\pi i}\int_{\Psi-2\pi}^{\Psi}d\theta\,e^{-si\theta}\frac{d}{d\theta}\ln\,F\left(Re^{i\theta}\right).
\end{align}
In this expression, the first integral, which we will denote by $Z_N(s)$, is now holomorphic for $\Re(s)>\alpha-(N/m)-\delta$. The integral in the second line represents a meromorphic function of $s\in\C$ that contains the information about the poles of $\z_S(s)$. The integral in the third line, which we will denote by $Q(s)$, is an entire function of $s$.
The integral in the second line of \eqref{2.43} can be computed explicitly by using the formula valid for $\mu<0$ and $n\in\N_0$
\begin{align}\label{intbyparts}
   \int_{R}^{\infty}dt\,t^{\mu-1}\ln^{n}\left(te^{i\Psi}\right)=-\frac{R^{\mu}}{\mu}\sum_{j=0}^{n}\frac{(-1)^{j}n!}{\mu^{j}(n-j)!}\ln^{n-j}\left(Re^{i\Psi}\right),
\end{align}
which can be proved by integrating by parts $n$-times. More explicitly, we have
\begin{align}\label{2.50}
&e^{is(\pi-\Psi)}\frac{\sin(\pi s)}{\pi}\sum_{j=0}^N  \sum_{k=0}^{M} d_{j,k} e^{i(\alpha-(j/m)) \Psi} \int_R^\infty dt\, t ^{-s+\alpha-(j/m)-1}\ln^{k-1}\left(te^{i\Psi}\right)\left[k+(\alpha-(j/m))\ln\left(te^{i\Psi}\right)\right]\nonumber\\
&\quad=e^{is(\pi-\Psi)}\frac{\sin(\pi s)}{\pi}\sum_{j=0}^N  \sum_{k=0}^{M}e^{i(\alpha-(j/m)) \Psi} \dfrac{d_{j,k}R ^{-s+\alpha-(j/m)}\,k!}{(s-\alpha+(j/m))^{k+1}}\Bigg[(\alpha-(j/m))\\
&\qquad\quad+\Theta(k-1)\sum_{l=0}^{k-1}\frac{\ln^{k-1-l}\big(R e^{i\Psi}\big)}{(s-\alpha+(j/m))^{l-k}(k-1-l)!}\Bigg(1+\frac{\alpha-(j/m)}{k-l}\ln\big(R e^{i\Psi}\big)\Bigg)\Bigg]=:\mathcal{L}_{\textrm{asy},N}(s),\notag
\end{align}
where $\Theta(\dott)$ denotes the Heaviside step-function
\begin{align}\label{eq:Heaviside}
\Theta(x)=\begin{cases}
    1,\quad x\geq 0,\\
    0,\quad x<0, 
\end{cases}\quad x\in\R. 
\end{align}
We can now express the spectral $\z$-function as a sum
\begin{align}\label{2.47}
   \zeta_S(s)=Z_{N}(s)+ \mathcal{L}_{\textrm{asy},N}(s)+Q(s).
\end{align}
This form makes the poles explicit in the region $\Re(s)>\alpha-(N/m)-\delta$, noting that if $(\alpha-(j/m))\in\mathbb{Z}$, then the $\sin(\pi s)$ term will contribute a zero of multiplicity one at these values. For instance, if $(\alpha-(j/m))\notin\mathbb{Z},$ the pole at $s=\alpha-(j/m)$ is of order $\bar{k}+1$ where $\bar{k}=\max\{0,\dots,M\}$ such that  $d_{j,\bar{k}}\neq 0$ whereas the pole is of order $\bar{k}$ if $(\alpha-(j/m))\in\mathbb{Z}\backslash\{0\}$ (with the understanding that a pole of order $0$ is a regular point).

From the expression \eqref{2.50} it is not difficult to realize that $s=0$ is always a regular point for $\zeta_S(s)$ whenever $\alpha-(j/m)\neq 0$. In this case $\zeta_{S}(0)=0$ which is obtained by simply substituting $s=0$ in \eqref{2.47}.

If, on the other hand, there exists $j'\in\{0,1,\dots,N\}$ such that $\alpha-(j'/m)=0$, then $\z_S(s)$  is regular at $s=0$ for $M=0$ leading to $\zeta_{S}(0)=0$. The same can be said for $M=1$ since the only resulting pole $e^{is(\pi-\Psi)}R^{-s}d_{j',1}/s$, coming from \eqref{2.50}, is canceled by the $\sin(\pi s)$ prefactor giving $\z_S(0)=d_{j',1}$. When $M\geq 2$, instead, the $\z$-function develops, in general, a pole of order $M-1$ at $s=0$.
\end{proof}

The explicit analytic continuation performed in the proof of Theorem \ref{t2.11} allows us to evaluate the residue of $\z_{S}(s)$ at each of the poles. The results are summarized in the following theorem: 

\begin{theorem}\label{t2.12}
    Let the assumptions of Theorem \ref{t2.11} hold. The residue of $\z_{S}(s)$ at the pole $s=\alpha-(j/m)$ is given by $($where $\Theta(\dott)$ denotes the Heaviside step-function \eqref{eq:Heaviside}$)$
    \begin{align}\label{2.53a}
    \Res[\zeta_S(s);\ s=\alpha-(j/m)]&=\frac{\Theta(M)}{2\pi i}d_{j,0}(\alpha-(j/m))\left(e^{2\pi i(\alpha-(j/m))}-1\right)\nonumber\\
    &\quad\ +\frac{\Theta(M-1)}{2\pi i}d_{j,1}\left[(2\pi i(\alpha-(j/m))+1)e^{2\pi i(\alpha-(j/m))}-1\right]\nonumber\\
    &\quad\ +\frac{\Theta(M-2)}{2\pi i}\sum_{k=2}^{M}d_{j,k}(2\pi i)^{k-1}e^{2\pi i(\alpha-(j/m))}\left[2\pi i(\alpha-(j/m))+k\right].
\end{align}
In particular, for $M\geq 2$ the residue of $\z_{S}(s)$ at $s=\alpha-(j'/m)=0$ is given by
\begin{align}\label{2.54a}
   \Res[\zeta_S(s);&\ s=0]=\sum_{k=2}^{M}d_{j',k}(2\pi i)^{k-1}k.
\end{align}
\end{theorem}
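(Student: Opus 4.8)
The plan is to read off the residue directly from the decomposition \eqref{2.47}, $\zeta_S(s) = Z_N(s) + \mathcal{L}_{\textrm{asy},N}(s) + Q(s)$. Since $Z_N(s)$ is holomorphic for $\Re(s) > \alpha - (N/m) - \delta$ and $Q(s)$ is entire, while each pole $s = \alpha - (j/m)$ with $0 \le j \le N$ satisfies $\alpha - (j/m) \ge \alpha - (N/m) > \alpha - (N/m) - \delta$ and hence lies in that half-plane, neither $Z_N$ nor $Q$ contributes a residue there. Thus $\Res[\zeta_S(s);\ s = \alpha - (j/m)] = \Res[\mathcal{L}_{\textrm{asy},N}(s);\ s = \alpha - (j/m)]$. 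As the pole locations $\alpha - (j/m)$ are distinct for distinct $j$, only the block of summands in \eqref{2.50} sharing the matching index $j$ (the sum over $k = 0, \ldots, M$ at that fixed $j$) is singular there, all others being regular and discardable.

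First I would fix this $j$, write $\beta = \alpha - (j/m)$ and $u = s - \beta$, and, rather than expand the already-integrated form \eqref{2.50}, return to the integral in the second line of \eqref{2.43}. Under the substitution $\tau = t e^{i\Psi}$ the prefactor $e^{is(\pi - \Psi)}\frac{\sin(\pi s)}{\pi}$ together with $e^{i\beta\Psi}$ and the phases picked up by $dt$ and $t^{\beta - s - 1}$ collapse, turning this block into $e^{i\pi s}\frac{\sin(\pi s)}{\pi}\sum_{k=0}^M d_{j,k}\int_{Re^{i\Psi}}^{\infty e^{i\Psi}} \tau^{\beta - s - 1}\big[\beta\ln^k\tau + k\ln^{k-1}\tau\big]\, d\tau$. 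Repeated integration by parts---the $\tau$-analogue of \eqref{intbyparts}---evaluates each integral as $e^{-uL}$ times a Laurent tail $\sum_{p=0}^{k}(\ast)\, u^{-(p+1)}$, where $L = \ln(Re^{i\Psi})$ and the coefficients $(\ast)$ are polynomials in $L$.

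Next I would Taylor-expand the analytic prefactor $\Phi(u) := e^{i\pi s}\frac{\sin(\pi s)}{\pi}e^{-uL}$ about $u = 0$, using $e^{i\pi s} = e^{i\pi\beta}e^{i\pi u}$ and $\sin(\pi s) = \sin(\pi\beta)\cos(\pi u) + \cos(\pi\beta)\sin(\pi u)$ to write $\Phi(u) = \frac{e^{i\pi\beta}}{\pi}e^{(i\pi - L)u}\big[\sin(\pi\beta)\cos(\pi u) + \cos(\pi\beta)\sin(\pi u)\big]$. Multiplying this Taylor series against the Laurent tails and collecting the coefficient of $u^{-1}$ yields the residue. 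The one genuinely delicate point is that every term containing $L$ must drop out, which it must since $\zeta_S(s)$ is independent of the auxiliary radius $R$; the mechanism is already visible at $k = 1$, where the prefactor's $\ln R$ (entering through $i\pi - L$) exactly cancels the $\beta L$ generated by $\int \tau^{-u-1}\ln\tau\, d\tau$. After this cancellation the surviving, $R$-independent pieces reorganize---via $e^{i\pi\beta}\sin(\pi\beta) = (e^{2\pi i\beta} - 1)/(2i)$ and $e^{i\pi\beta}\cos(\pi\beta) = (e^{2\pi i\beta} + 1)/2$---into the three lines of \eqref{2.53a}, with the factors $\Theta(M)$, $\Theta(M-1)$, $\Theta(M-2)$ simply recording whether the powers $k = 0$, $k = 1$, $k \ge 2$ occur.

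Finally, \eqref{2.54a} follows by setting $\beta = 0$ in \eqref{2.53a} (legitimate only when $\beta \in \mathbb{Z}$, so that $e^{2\pi i\beta} = 1$): the first line vanishes through its factor $\beta(e^{2\pi i\beta} - 1)$, the second through $(2\pi i\beta + 1)e^{2\pi i\beta} - 1 = 0$ at $\beta = 0$, and the third reduces to $\sum_{k=2}^M d_{j',k}(2\pi i)^{k-1}k$. I expect the main obstacle to be purely the bookkeeping of the nested $k$- and $p$-sums together with the verification that all $\ln(Re^{i\Psi})$ contributions cancel; beyond \eqref{2.47}, \eqref{2.50}, and \eqref{intbyparts} no further analytic input is needed.
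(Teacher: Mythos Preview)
Your proposal is correct and follows essentially the same approach as the paper: reduce to $\mathcal{L}_{\textrm{asy},N}(s)$ via \eqref{2.47}, Taylor-expand the analytic prefactor about $s=\alpha-(j/m)$, multiply against the Laurent tail coming from \eqref{intbyparts}, and extract the $u^{-1}$ coefficient. The only organizational differences are that the paper works directly from the already-integrated form \eqref{2.50} (rather than re-substituting $\tau = te^{i\Psi}$), and it verifies the cancellation of all $\ln(Re^{i\Psi})$ terms \emph{explicitly} via binomial identities of the type $\sum_{l=0}^k \binom{k}{l}(2\pi i - L)^l L^{k-l} = (2\pi i)^k$, instead of appealing to the $R$-independence of $\zeta_S$; this explicit computation is exactly the ``bookkeeping'' you anticipate as the main obstacle.
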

\begin{proof}
    The residue of $\zeta_{S}(s)$ at each of its poles $s=\alpha-(j/m)$ is the coefficient of the term $(s-\alpha+(j/m))^{-1}$ of the Laurent expansion of $\mathcal{L}_{\textrm{asy},N}(s)$ in \eqref{2.50} around the point $\alpha-(j/m)$.
In order to find this coefficient we first need the expansion
\begin{align}\label{2.55}
    e^{is(\pi-\Psi)}\frac{\sin(\pi s)}{\pi}R^{-s}=\sum_{n=0}^{\infty}\mathcal{Y}_{n}(s-\alpha+(j/m))^{n},
\end{align}
with the coefficients $\mathcal{Y}_{n}$ given by 
\begin{align}\label{2.56a}
\mathcal{Y}_{n}&=\frac{1}{n!}\left[e^{is(\pi-\Psi)}\frac{\sin(\pi s)}{\pi}R^{-s}\right]^{(n)}_{s=\alpha-(j/m)}\nonumber\\
&=\frac{1}{2\pi i\,n!}\left[\exp\left(s(2\pi i)-\ln\big(R e^{i\Psi}\big)\right)-\exp\left(-s\,\ln\big(R e^{i\Psi}\big)\right)\right]^{(n)}_{s=\alpha-(j/m)}\nonumber\\
&=\frac{1}{2\pi i\,n!}R^{-\alpha+(j/m)}e^{-i(\alpha-(j/m))\Psi}\left[\left(2\pi i-\ln\big(R e^{i\Psi}\big)\right)^{n}e^{2\pi i(\alpha-(j/m))}-(-1)^{n}\ln^{n}\big(R e^{i\Psi}\big)\right].
\end{align}
By utilizing the series \eqref{2.55} and the expression \eqref{2.50} for $\mathcal{L}_{\textrm{asy},N}(s)$, one can find the residue of $\z_{S}(s)$ at $s=\alpha-(j/m)$ in terms of the coefficients $\mathcal{Y}_{n}$ in \eqref{2.56a} as follows  
    \begin{align}\label{2.57a}
\Res[\zeta_S(s);\ s=\alpha-(j/m)\neq 0]&=\sum_{k=0}^{M}e^{i(\alpha-(j/m))\Psi}R^{\alpha-(j/m)}d_{j,k}\Bigg[(\alpha-(j/m))\sum_{l=0}^{k}\mathcal{Y}_{l}\frac{k!\ln^{k-l}\big(R e^{i\Psi}\big)}{(k-l)!}\nonumber\\
&\qquad +\Theta(k-1)\sum_{l=0}^{k-1}\mathcal{Y}_{l}\frac{k!\ln^{k-1-l}\big(R e^{i\Psi}\big)}{(k-1-l)!}\Bigg].       
\end{align}
For the first sum over $l$ in \eqref{2.57a} we have
\begin{align}
    \sum_{l=0}^{k}\mathcal{Y}_{l}\frac{k!\ln^{k-l}\big(R e^{i\Psi}\big)}{(k-l)!}&=\frac{R^{-\alpha+(j/m)}e^{-i(\alpha-(j/m))\Psi}}{2\pi i}\Bigg[e^{2\pi i(\alpha-(j/m))}\sum_{l=0}^{k}\binom{k}{l}\left(2\pi i-\ln\big(R e^{i\Psi}\big)\right)^{l}\ln^{k-l}\big(R e^{i\Psi}\big)\nonumber\\
    &\quad-\ln^{k}\big(R e^{i\Psi}\big)\sum_{l=0}^{k}\binom{k}{l}(-1)^{l}\Bigg].
\end{align}
Since 
\begin{align}
    \sum_{l=0}^{k}(-1)^{j}\binom{k}{l}=\begin{cases}
1, & \text{if }k=0,\\
0,& \text{if }k\geq 1,
\end{cases}\quad\textrm{and}\quad \sum_{l=0}^{k}(-1)^{j}\binom{k}{l}\left(2\pi i-\ln\big(R e^{i\Psi}\big)\right)^{l}\ln^{k-l}\big(R e^{i\Psi}\big)=(2\pi i)^{k},
\end{align}
we obtain
\begin{align}\label{2.60}
\sum_{l=0}^{k}\mathcal{Y}_{l}\frac{k!\ln^{k-l}\big(R e^{i\Psi}\big)}{(k-l)!}= \frac{R^{-\alpha+(j/m)}e^{-i(\alpha-(j/m))\Psi}}{2\pi i}\times\begin{cases}
e^{2\pi i(\alpha-(j/m))}-1, & \text{if }k=0,\\
e^{2\pi i(\alpha-(j/m))}(2\pi i)^{k},& \text{if }k\geq 1.
\end{cases}
\end{align}
A similar calculation shows that for the second sum over $l$ in \eqref{2.57a} one has
\begin{align}\label{2.61}
 \sum_{l=0}^{k-1}\mathcal{Y}_{l}\frac{k!\ln^{k-1-l}\big(R e^{i\Psi}\big)}{(k-1-l)!} =\frac{R^{-\alpha+(j/m)}e^{-i(\alpha-(j/m))\Psi}}{2\pi i}\times\begin{cases}
e^{2\pi i(\alpha-(j/m))}-1, & \text{if }k=1,\\
k\,e^{2\pi i(\alpha-(j/m))}(2\pi i)^{k-1},& \text{if }k\geq 2.
\end{cases}  
\end{align}
By substituting the results \eqref{2.60} and \eqref{2.61} in \eqref{2.57} one obtains the claim \eqref{2.53a}. 
The residue at $s=0$ in \eqref{2.54a} can be readily obtained from \eqref{2.53a} by setting $M\geq 2$ and $\alpha-(j'/m)=0$.  
\end{proof}

Since we now focus on the computation of $\zeta'_{S}(0)$, we restrict our analysis to the case $M=1$. As shown in Theorem \ref{t2.11}, in this case the $\z$-function is regular at $s=0$ which implies that $\zeta'_{S}(0)$ is well defined. 
We remind the reader that, throughout, $\ln \, F(z)$ denotes a holomorphic function in $\mathcal I_\varepsilon^c$, and $\ln \, F(0)$ its analytic continuation as $z \to 0$ inside $\mathcal I_\varepsilon^c$, which might not coincide with the standard choice of the logarithm (see, e.g.,~Remark~\ref{Remark:3.8} $(ii)$).
\begin{theorem}\label{t3.5}
Let Assumption \ref{assump2.9} hold with $M=1$. Then 
$\zeta_{S}(s)$ is regular at $s=0$ and 
\begin{align}\label{2.51b}
    \zeta'_{S}(0)&=\begin{cases}
-\ln\, F(0), & \text{if }\alpha-(j/m)\neq0 \text{ for any } j\in\{0,1,\dots,N\},\\
i\pi d_{j',1}+d_{j',0}-\ln\, F(0),& \text{if }\alpha-(j'/m)=0 \text{ for some } j'\in\{0,1,\dots,N\}.
\end{cases}  
\end{align}
In particular, $\zeta'_S(0)=i\pi\zeta_S(0)+d_{j',0}-\ln\, F(0)$ where $d_{j',0}=0$ in the first case of \eqref{2.51b}.
\end{theorem}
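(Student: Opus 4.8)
\emph{Approach.} The plan is to differentiate, term by term at $s=0$, the decomposition $\zeta_S(s) = Z_N(s) + \mathcal{L}_{\textrm{asy},N}(s) + Q(s)$ established in \eqref{2.47} during the proof of Theorem \ref{t2.11}. With $M=1$, Theorem \ref{t2.11} already guarantees that each summand, and hence $\zeta_S$, is holomorphic at $s=0$, so $\zeta'_S(0) = Z'_N(0) + \mathcal{L}'_{\textrm{asy},N}(0) + Q'(0)$ is legitimate. The engine of every computation is the common prefactor $P(s) := e^{is(\pi-\Psi)}\sin(\pi s)/\pi$, for which $P(0)=0$, $P'(0)=1$, and more precisely $P(s) = s + i(\pi-\Psi)s^2 + O(s^3)$; the vanishing of $P$ at $s=0$ annihilates most contributions, so only first-order data survive.

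\emph{The elementary pieces.} First I would handle $Z_N$. Writing $Z_N(s) = P(s) I_N(s)$ with $I_N(s) = \int_R^\infty t^{-s}\frac{d}{dt}[\ln F(te^{i\Psi}) - (\text{asymptotic terms})]\,dt$, the product rule gives $Z'_N(0) = I_N(0)$. Since the bracket at $s=0$ is a total $t$-derivative, $I_N(0)$ is a boundary term; the contribution at $t=\infty$ vanishes because Assumption \ref{assump2.9} makes the bracket $o(t^{\alpha - N/m - \delta})$ with $\alpha - N/m - \delta < 0$, leaving only $-\ln F(Re^{i\Psi})$ plus the asymptotic terms evaluated at $t=R$. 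Next, reading off $\mathcal{L}_{\textrm{asy},N}(s) = P(s)G(s)$ from \eqref{2.50}, one computes $\mathcal{L}'_{\textrm{asy},N}(0)$. For every $j$ with $\alpha - j/m \neq 0$ the factor $G$ is holomorphic at $0$, so $\mathcal{L}'_{\textrm{asy},N}(0)$ picks up $G(0)$, which I expect to equal $-\sum_j e^{i(\alpha-j/m)\Psi}R^{\alpha-j/m}[d_{j,0} + d_{j,1}\ln(Re^{i\Psi})]$, precisely cancelling the $R$-dependent asymptotic remnant of $Z'_N(0)$. The only subtlety is a resonant index $j'$ with $\alpha - j'/m = 0$: there the $k=0$ term vanishes identically while the $k=1$ term collapses to $P(s)\,d_{j',1}R^{-s}/s$, whose derivative at $0$, computed via $P(s)/s = 1 + i(\pi-\Psi)s + O(s^2)$, contributes $i(\pi-\Psi)d_{j',1}-d_{j',1}\ln R$; combining this with the surviving $j'$ boundary piece of $Z'_N(0)$ yields the extra $i\pi d_{j',1} + d_{j',0}$.

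\emph{The geometric heart.} The main computation is $Q'(0)$. Differentiating $Q(s) = -\frac{R^{-s}}{2\pi i}\int_{\Psi-2\pi}^{\Psi} e^{-si\theta}\frac{d}{d\theta}\ln F(Re^{i\theta})\,d\theta$ and setting $s=0$, the $\ln R$ piece drops out because $\int_{\Psi-2\pi}^\Psi \frac{d}{d\theta}\ln F(Re^{i\theta})\,d\theta = 0$; this vanishing is the argument principle applied to $|z|=R$, which encloses no zeros of $F$ since $R < |a_1|$. What remains is $Q'(0) = \frac{1}{2\pi}\int_{\Psi-2\pi}^\Psi \theta\,\frac{d}{d\theta}\ln F(Re^{i\theta})\,d\theta$, which I would integrate by parts. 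The boundary term yields $\ln F(Re^{i\Psi})$ (the two endpoints agree because the zero winding number makes the branch single-valued around the circle), and the leftover $-\frac{1}{2\pi}\int_{\Psi-2\pi}^\Psi \ln F(Re^{i\theta})\,d\theta$ equals $-\ln F(0)$ by the mean value property for $\ln F$, holomorphic throughout $|z|<R$. Hence $Q'(0) = \ln F(Re^{i\Psi}) - \ln F(0)$.

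\emph{Assembly and obstacle.} Adding the three derivatives, the $\pm\ln F(Re^{i\Psi})$ terms cancel between $Z'_N(0)+\mathcal{L}'_{\textrm{asy},N}(0)$ and $Q'(0)$, giving $\zeta'_S(0) = -\ln F(0)$ in the non-resonant case and $\zeta'_S(0) = i\pi d_{j',1} + d_{j',0} - \ln F(0)$ when $\alpha - j'/m = 0$; recalling $\zeta_S(0) = d_{j',1}$ from \eqref{2.56} then packages both into $\zeta'_S(0) = i\pi\zeta_S(0) + d_{j',0} - \ln F(0)$. The step demanding the most care is the branch bookkeeping in $Q'(0)$: one must verify that the branch of $\ln F$ holomorphic in $\mathcal{I}_\varepsilon^c$ (used for $\ln F(Re^{i\Psi})$ and for $\ln F(0)$ in the sense of Remark \ref{rem3.2} $(i)$) coincides with the single-valued holomorphic branch on $|z|<R$ to which the mean value property is applied, so that the cancellations and the identification of $\ln F(0)$ are consistent.
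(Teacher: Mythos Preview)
Your proposal is correct and follows the paper's decomposition $\zeta_S=Z_N+\mathcal{L}_{\textrm{asy},N}+Q$; the treatment of $Z'_N(0)$ and $\mathcal{L}'_{\textrm{asy},N}(0)$ matches the paper essentially verbatim.

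The one genuine difference is the evaluation of $Q'(0)$. The paper does not differentiate the circle integral directly; instead it invokes the identity \eqref{2.30}, which rewrites $Q(s)$ as $e^{is(\pi-\Psi)}\frac{\sin(\pi s)}{\pi}\int_0^R t^{-s}\frac{d}{dt}\ln F(te^{i\Psi})\,dt$ for $\Re(s)<1$, and then reads off $Q'(0)=\int_0^R\frac{d}{dt}\ln F(te^{i\Psi})\,dt=\ln F(Re^{i\Psi})-\ln F(0)$ in one line. Your route---argument principle to kill the $\ln R$ term, integration by parts, and the mean value property for $\ln F$ on the disk $|z|<R$---reaches the same endpoint without relying on \eqref{2.30}. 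The paper's approach is shorter and keeps the branch choice tied to the ray $R_\Psi\subset\overline{\mathcal I_\varepsilon^c}$, so the identification of $\ln F(0)$ with the limit in Remark~\ref{rem3.2}\,$(i)$ is automatic. Your approach is self-contained but, as you note, requires matching the single-valued branch of $\ln F$ on $|z|<R$ with the branch fixed in $\mathcal I_\varepsilon^c$; since these two domains overlap in a connected set containing $Re^{i\Psi}$, choosing the disk branch to agree there makes the bookkeeping consistent, and your proof goes through.
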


\begin{proof}

According to Theorem \ref{t2.11}, $\z_S(s)$ is regular at $s=0$ for $M=1$, hence $\zeta'_S(0)$ is well defined. 
First, notice that 
\begin{align}\label{2.52b}
 Z'_N(0)&=  \int_R^{\infty}dt\,\frac{d}{dt}\Bigg[\ln\,F\left(te^{i\Psi}\right)-\sum_{j=0}^{N}\left(te^{i\Psi}\right)^{(\alpha-(j/m))}\left(d_{j,0}+d_{j,1}\ln\left(te^{i\Psi}\right)\right)\Bigg] \nonumber\\
 &=-\ln\,F\left(Re^{i\Psi}\right)+\sum_{j=0}^{N}\left( Re^{i\Psi}\right)^{(\alpha-(j/m))}\left(d_{j,0}+d_{j,1}\ln\left(Re^{i\Psi}\right)\right).
\end{align}
For the function $Q(s)$ one concludes, from the representation \eqref{2.30}, that
\begin{equation}\label{2.53b}
Q'(0)=\int_0^{R}dt\,\frac{d}{dt}\ln\,F\left(te^{i\Psi}\right)=\ln\, F(Re^{i\Psi})-\ln\, F(0).
\end{equation}
Finally, whenever $\a-(j'/m)=0$ for some $j'\in\{0,1,\dots,N\}$, one has
\begin{align}\label{2.54b}
  \mathcal{L}_{\textrm{asy},N}^{'}(0)=\left[i\pi-\ln\big(R e^{i\Psi}\big)\right]d_{j',1}- \sum_{\underset{j\neq j'}{j=0}}^{N}\left(Re^{i\Psi}\right)^{(\alpha-(j/m))}\left(d_{j,0}+d_{j,1}\ln\left(Re^{i\Psi}\right)\right),\end{align}
  while for  $\a-(j/m)\neq 0$ for any $j\in\{0,1,\dots,N\}$ we get
  \begin{align}\label{2.54c}
    \mathcal{L}_{\textrm{asy},N}^{'}(0) =- \sum_{j=0}^{N}\left(Re^{i\Psi}\right)^{(\alpha-(j/m))}\left(d_{j,0}+d_{j,1}\ln\left(Re^{i\Psi}\right)\right). 
  \end{align}
By adding the results \eqref{2.52b}, \eqref{2.53b} and either of \eqref{2.54b} or \eqref{2.54c} one obtains \eqref{2.51b}.
\end{proof}
\begin{remark}\label{Remark:3.8}
$(i)$ In physical applications, one is concerned with $\zeta_S'(0)$ where $S$ is the spectrum of a, generally, self-adjoint operator $T$ in order to define the $\zeta$-regularized functional determinant as $\det_\zeta(T):=\exp{(-\zeta'_S(0))}$. As such, the particular branch of $\ln \, F(0)$ appearing in Theorem \ref{t3.5} will not contribute to the $\zeta$-regularized functional determinant of $T$.\\[1mm]
$(ii)$ Suppose the sequence $\{a_n\}_{n\in\bbN}$ is bounded from below and consists of real numbers. Then the sum definition of the corresponding $\zeta$-function (with the branch cut chosen in the second quadrant following \cite{FGKS21,FPS25,KM03,KM04}) immediately yields that the imaginary part of $\zeta'_S(0)$ is simply $i\pi m$ where $m$ is the number of negative terms in the sequence $\{a_n\}_{n\in\bbN}$. This can be readily understood by noting that the finite sum of the negative numbers can be split from the infinite sum of the remaining positive terms and that for each negative number in the sequence one has
\begin{equation}
\dfrac{d}{ds}a_n^{-s}\bigg|_{s=0}=-\ln(a_n)=i\pi-\ln(|a_n|).
\end{equation}
On the other hand, the imaginary part of $\zeta'_S(0)$ can be extracted from \eqref{2.51b}, yielding the interesting equality in this case:
\begin{equation}\label{negativewinding}
m=\Re(d_{j',1})+(1/\pi)\Im[d_{j',0}-\ln \, F(0)],
\end{equation}
where $d_{j',\ell},\ \ell=0,1,$  are appropriate constants from the large-$z$ asymptotic expansion of $\ln\, F(z)$ if $\a-(j'/m)=0$, and zero otherwise. This implies that \eqref{2.51b} can be rewritten as
\begin{align}\label{2.57}
  \zeta'_{S}(0)=i\pi m+\Re(d_{j',0})-\pi\,\Im(d_{j',1})-\ln\, |F(0)|=i\pi m+\Re(d_{j',0})-\pi\,\Im(\zeta_S(0))-\ln\, |F(0)|,
\end{align}
whenever the sequence  $\{a_n\}_{n\in\bbN}$ consists of real numbers and is bounded from below.
\end{remark}

The explicit analytic continuation performed in the proof of Theorem \ref{t2.11} can be used to evaluate the $\z$-function at integer points no greater than $\alpha$ as long as they are regular points of $\zeta_S(s)$ (note the point $s=0$ has already been covered in equation \eqref{2.56} of Theorem \ref{t2.11}). In fact we have the following:
\begin{theorem}\label{t3.7}
Let Assumption \ref{assump2.9} hold to all orders, that is, for all $M,N\in\mathbb{N}$. Furthermore, let $n\in\Z\backslash\{0\}$ with $n\leq\alpha$ be a regular point of the function $\z_S(s)$. Then
    \begin{align}\label{3.26}
     \zeta_S(n)=\begin{cases}
-nb_n, & \text{if }\alpha-(j/m)\neq n \text{ for any } j\in\{0,1,\dots,N\}\,\textrm{and}\; 1\leq n\leq\alpha ,\\
n(d_{m(\alpha-n),0}-b_n),& \text{if }\alpha-(\tilde{j}/m)=n \text{ for some } \tilde{j}\in\{0,1,\dots,N\}\,\textrm{and}\; 1\leq n\leq\alpha ,\\
0, & \text{if }\alpha-(j/m)\neq n \text{ for any } j\in\{0,1,\dots,N\}\,\textrm{and}\; n<0 ,\\
n\,d_{m(\alpha-n),0},& \text{if }\alpha-(\tilde{j}/m)=n \text{ for some } \tilde{j}\in\{0,1,\dots,N\}\,\textrm{and}\; n< 0,
\end{cases}     
    \end{align}
with $b_n$ given in \eqref{t2.28}. If Assumption \ref{assump2.9} is only valid for some $M,N\in\mathbb{N}$, then the above holds under appropriate restrictions on $n$.
\end{theorem}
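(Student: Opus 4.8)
The plan is to start from the decomposition \eqref{2.47}, $\zeta_S(s) = Z_N(s) + \mathcal{L}_{\textrm{asy},N}(s) + Q(s)$, obtained in the proof of Theorem \ref{t2.11}, and to evaluate each of the three pieces separately at the integer $s = n$. Since Assumption \ref{assump2.9} is taken to hold to all orders, I may fix $N$ large enough that $n > \alpha - (N/m) - \delta$, so that $n$ lies in the region where this representation is valid; when the expansion holds only up to finite $M, N$ one needs exactly the condition $n > \alpha - (N/m) - \delta$, which is the ``appropriate restriction on $n$'' in the statement. The governing structural observation is that both $Z_N(s)$ and $\mathcal{L}_{\textrm{asy},N}(s)$ carry the prefactor $\sin(\pi s)/\pi$, which vanishes at every integer; hence at $s = n$ the only surviving contributions come from the entire function $Q(s)$ and from any summand of $\mathcal{L}_{\textrm{asy},N}$ whose pole cancels this zero.

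First I would dispose of $Z_N(n)$. Because enough asymptotic terms have been subtracted, the integral defining $Z_N(s)$ converges and is holomorphic at $s = n$; multiplying a finite quantity by $\sin(\pi n) = 0$ gives $Z_N(n) = 0$ outright. Next I would compute $Q(n)$. Converting the angular integral defining $Q(s)$ back via $z = Re^{i\theta}$, one gets $Q(s) = -\frac{1}{2\pi i}\oint z^{-s}\,\frac{d}{dz}\ln F(z)\,dz$ over the circle $|z| = R$ (with $R < |a_1|$) traversed counterclockwise. At an integer $s = n$ the map $z \mapsto z^{-n}$ is single-valued, so the branch cut is irrelevant and the residue theorem applies with the origin as the only enclosed singularity. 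For $n \geq 1$ this yields $Q(n) = -\Res[z^{-n}\frac{d}{dz}\ln F(z);\ z = 0] = -nb_n$ by Theorem \ref{t4} and \eqref{t2.28}; for $n < 0$ the integrand $z^{|n|}\frac{d}{dz}\ln F(z)$ is holomorphic at the origin (as $F(0) \neq 0$), so $Q(n) = 0$. This is precisely the circle--integral contribution flagged in Remark \ref{remposint}, and it already supplies the $-nb_n$ terms and the vanishing-for-$n<0$ terms of \eqref{3.26}.

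The remaining piece $\mathcal{L}_{\textrm{asy},N}(n)$ is where the main obstacle lies. In the non-resonant case, $\alpha - (j/m) \neq n$ for every $j$, each summand in \eqref{2.50} is finite at $s = n$, so the $\sin(\pi s)$ prefactor forces $\mathcal{L}_{\textrm{asy},N}(n) = 0$. In the resonant case $\alpha - (\tilde{j}/m) = n$ I would first invoke Theorem \ref{t2.11}: regularity of $\zeta_S$ at the integer $n \in \Z \setminus \{0\}$ requires the pole order $\bar{k}$ to vanish, so $d_{\tilde{j},k} = 0$ for all $k \geq 1$ and only the $k = 0$ term of the $j = \tilde{j}$ summand can contribute. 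Isolating that term, the simple pole $1/(s - n)$ cancels the simple zero of $\sin(\pi s)$; using $\sin(\pi s) \sim \pi(-1)^n(s - n)$ and collecting the prefactors $e^{is(\pi - \Psi)}$, $e^{in\Psi}$, and $R^{-s+n}$ evaluated at $s = n$ (with $\alpha - (\tilde{j}/m) = n$), I expect the limit to collapse to $n\,d_{\tilde{j},0} = n\,d_{m(\alpha - n),0}$. The delicate point is exactly this indeterminate limit: one must verify that the several exponential and logarithmic factors combine with no leftover phase, leaving the clean value $n\,d_{\tilde{j},0}$.

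Finally I would assemble the contributions. Since $Z_N(n) = 0$ always, $\zeta_S(n) = \mathcal{L}_{\textrm{asy},N}(n) + Q(n)$. For $1 \leq n \leq \alpha$ this gives $-nb_n$ in the non-resonant case and $n\,d_{m(\alpha-n),0} - nb_n = n(d_{m(\alpha-n),0} - b_n)$ in the resonant case; for $n < 0$, where $Q(n) = 0$, it gives $0$ in the non-resonant case and $n\,d_{m(\alpha-n),0}$ in the resonant case. These reproduce all four branches of \eqref{3.26}, with $b_n$ as in \eqref{t2.28}, completing the argument.
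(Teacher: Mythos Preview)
Your proposal is correct and follows essentially the same approach as the paper's proof: start from the decomposition $\zeta_S(s)=Z_N(s)+\mathcal{L}_{\textrm{asy},N}(s)+Q(s)$, kill $Z_N(n)$ via the $\sin(\pi s)$ prefactor, evaluate $Q(n)$ as a residue at the origin (giving $-nb_n$ for $n\geq 1$ and $0$ for $n<0$), and handle $\mathcal{L}_{\textrm{asy},N}(n)$ by using regularity at $n$ to force $d_{\tilde j,k}=0$ for $k\geq 1$ and then computing the resonant limit. The paper carries out the indeterminate limit you flagged explicitly as $\lim_{s\to n}e^{is(\pi-\Psi)}\frac{\sin(\pi s)}{\pi}e^{in\Psi}\,\frac{n\,d_{m(\alpha-n),0}R^{-s+n}}{s-n}=n\,d_{m(\alpha-n),0}$, confirming your expectation that the phases cancel cleanly.
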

\begin{proof}
    In order to analyze $\zeta_S(s)$ at $s=n\in\Z\backslash\{0\}$ with $n\leq\alpha$, we need to use the analytically continued expression \eqref{2.47}. Since the integral in the definition of $Z_N(s)$ in \eqref{2.43} is a holomorphic function of $s$ for $\Re(s)>\alpha-(N/m)-\delta$, it is immediate to conclude that for $s=n\in\Z$ and $n\leq\alpha$, $Z_N(n)=0$. The function $Q(s)$ in \eqref{2.43} is entire and by noting that (see \cite[Thm. 4.1]{FGKS21})
    \begin{align}
     z^{-n}\frac{d}{dz}\ln\,F(z) = \sum_{j=1}^{\infty}jb_jz^{j-n-1},  
    \end{align}
    with the coefficients $b_j$ given in \eqref{t2.28},
  and by using Cauchy's residue theorem, we obtain  
    \begin{align}
        Q(n)=\begin{cases}
-nb_n,& \text{if }\,  1\leq n\leq \alpha,\\
0 & \text{if }\, n\leq 0.
        \end{cases}
    \end{align}

    The remaining contribution to $\z_S(n)$ with $n\leq\alpha$ comes from the asymptotic terms $\mathcal{L}_{\textrm{asy},N}(s)$ in \eqref{2.50}. From the explicit expression \eqref{2.50} one sees that if $\alpha-(j/m)\notin\Z$ for any $j\in\{0,1,\dots,N\}$, the prefactor $\sin(\pi s)$ leads to $\mathcal{L}_{\textrm{asy},N}(n)=0$. 

    Let us assume, now, that there exists $\tilde{j}\in\{0,1,\dots,N\}$ such that $\alpha-(\tilde{j}/m)=n$. Since $n$ is a regular point of $\z_S(s)$, according to Theorem \ref{t2.11} the only non-vanishing term of the asymptotic expansion \eqref{2.42} is $d_{m(\alpha-n),0}$, noting that $\tilde{j}=m(\alpha-n)$. From the expression \eqref{2.50} all the terms in the sum vanish as $s\to n$ except for the $\tilde{j}$ term for which we find
    \begin{align}
    \mathcal{L}_{\textrm{asy},N}(n)=\lim_{s\to n}\left[e^{is(\pi-\Psi)}\frac{\sin(\pi s)}{\pi}e^{i n\Psi} \dfrac{n\,d_{m(\alpha-n),0}R ^{-s+n}}{s-n}\right]=n\,d_{m(\alpha-n),0}.    
    \end{align}
\end{proof}
\begin{remark}
    If $F(z)$ is given by the Hadamard factorization, then the integral $Q(s)$ in \eqref{2.43} vanishes identically (since $b_{1}=\dots=b_{\lfloor\alpha\rfloor}=0$) and the contribution to the value of the $\z$-function at $s=n\in\Z\backslash\{0\}$ with $n\leq\alpha$ comes entirely from the asymptotic part $\mathcal{L}_{\textrm{asy},N}(n)$ (c.f. Remark \ref{remposint} $(ii)$).
\end{remark}

\section{\texorpdfstring{$\zeta$}{zeta}-function after a linear transformation of the defining sequence}\label{s4}

We now study the changes that occur to the $\z$-function if the sequence $S$ is modified by a general linear transformation. Let $A,B\in\C$ with $A\neq 0$ and define the new sequence $S_{A,B}=\{\lambda_n\}_{n\in\N}$ where $\lambda_n=Aa_n+B$. We will assume that $A,B$ are chosen so that $|\lambda_n|>0$ for $n\in\N$. Since the general linear transformation represents a combination of a translation and a rotation in the complex plane, if the sequence $S$ satisfies the assumptions of Definition \ref{defzeta}, then $S_{A,B}$ satisfies those assumptions as well.
In fact, item $(i)$ of Definition \ref{defzeta} holds after a rearrangement of finitely many terms and item $(ii)$ is immediate since $a_n$ and $\lambda_n$ clearly have the same large-$n$ asymptotic behavior. Moreover, item $(iii)$ holds via the following result:

\begin{lemma}
Suppose the sequence $\{a_n\}_{n\in\mathbb{N}}$ satisfies Definition \ref{defzeta} $(iii)$ for some $\varepsilon>0$ and $\Psi\in[-\pi,\pi)$. Then the sequence $\{\lambda_n\}_{n\in\N}$ defined via $\lambda_n=Aa_n+B$ also satisfies Definition \ref{defzeta} $(iii)$ for some $\varepsilon'>0$ and $\Psi'\in[-\pi,\pi)$ such that $\mathcal{I}^c_{\varepsilon'}\subset A\mathcal{I}^c_{\varepsilon}+B$ for large enough $|z|$ where $A\mathcal{I}^c_{\varepsilon}+B=\{z\in\mathbb{C}:z=Aw+B\text{ for }w\in\mathcal{I}^c_\varepsilon\}$.
\end{lemma}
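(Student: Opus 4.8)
The plan is to track how the affine map $z\mapsto Az+B$ transports the sectorial condition \eqref{I_eps}. First I would record the elementary geometric fact that $A\mathcal{I}^c_\varepsilon+B$ is again an open sector. Since $z\mapsto Az$ fixes the origin and acts as a rotation by $\arg A$ composed with a dilation by $|A|$, it maps the origin-apex cone $\mathcal{I}^c_\varepsilon=\{z:\Arg(z)\in(\Psi-\varepsilon,\Psi+\varepsilon)\}$ onto the origin-apex cone of the \emph{same} half-angle $\varepsilon$ but bisected by the direction $\psi_0:=\Psi+\arg A$; the subsequent translation by $B$ merely moves the apex to $B$. Writing $\Sigma_B:=A\mathcal{I}^c_\varepsilon+B=\{w:\Arg(w-B)\in(\psi_0-\varepsilon,\psi_0+\varepsilon)\}$, the hypothesis that every $a_n\in\mathcal{I}_\varepsilon$ (i.e.\ $a_n\notin\mathcal{I}^c_\varepsilon$) translates directly into $\lambda_n\notin\Sigma_B$ for all $n$. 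To avoid bookkeeping with the branch cut of $\Arg$ near $\pm\pi$, I would note that condition $(iii)$ of Definition \ref{defzeta} is equivariant under rotation and reduce, without loss of generality, to the case $\psi_0=0$, so that all angular comparisons below take place in a neighborhood of the positive real axis.

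The crux is then to replace the apex-$B$ sector $\Sigma_B$ by an apex-$0$ sector $\mathcal{I}^c_{\varepsilon'}$. The key estimate is that $\Arg(w-B)=\Arg(w)+\Arg(1-B/w)$ with $\Arg(1-B/w)\to 0$ as $|w|\to\infty$; concretely $|\Arg(w-B)-\Arg(w)|\le \arcsin(|B|/|w|)$. From this I would extract an $R>0$ such that every $\lambda_n$ with $|\lambda_n|>R$ satisfies $\Arg(\lambda_n)\notin J:=(\psi_0-\varepsilon/2,\psi_0+\varepsilon/2)$: indeed, if such a $\lambda_n$ had its argument in $J$, the displacement estimate would force $\Arg(\lambda_n-B)\in(\psi_0-\varepsilon,\psi_0+\varepsilon)$, i.e.\ $\lambda_n\in\Sigma_B$, a contradiction. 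The same estimate, run in the other direction, yields the containment claim: for any $\Psi'$ and $\varepsilon'$ with $[\Psi'-\varepsilon',\Psi'+\varepsilon']\subset(\psi_0-\varepsilon,\psi_0+\varepsilon)$ there is an $R'>0$ with $\mathcal{I}^c_{\varepsilon'}\cap\{|z|>R'\}\subset\Sigma_B$.

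Having pinned down $R$, I would now exploit that only finitely many $\lambda_n$ lie in the disk $|w|\le R$ (by item $(i)$, $|\lambda_n|\to\infty$) and that all of them are nonzero. Their arguments form a finite set $D\subset[-\pi,\pi)$. Since $J$ is an open interval and $D$ is finite, I can choose $\Psi'\in J\setminus D$ together with $\varepsilon'>0$ small enough that the closed interval $[\Psi'-\varepsilon',\Psi'+\varepsilon']$ lies in $J$ and the open interval $(\Psi'-\varepsilon',\Psi'+\varepsilon')$ meets neither $D$ nor the boundary of $J$. With this choice every $\lambda_n$ avoids $\mathcal{I}^c_{\varepsilon'}$: the terms with $|\lambda_n|>R$ have argument outside $J\supset(\Psi'-\varepsilon',\Psi'+\varepsilon')$ by the previous paragraph, while the finitely many terms with $|\lambda_n|\le R$ have argument in $D$, which is disjoint from $(\Psi'-\varepsilon',\Psi'+\varepsilon')$ by construction. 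After normalizing $\Psi'$ into $[-\pi,\pi)$ this is exactly Definition \ref{defzeta} $(iii)$ for $\{\lambda_n\}_{n\in\N}$, and since $[\Psi'-\varepsilon',\Psi'+\varepsilon']\subset(\psi_0-\varepsilon,\psi_0+\varepsilon)$ the containment $\mathcal{I}^c_{\varepsilon'}\subset A\mathcal{I}^c_\varepsilon+B$ for large $|z|$ follows from the estimate of the second step.

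The step I expect to require the most care is the coordination in the third paragraph: one must fix the cutoff $R$ using a \emph{fixed} angular margin ($\varepsilon/2$) before selecting $\varepsilon'$, so that the finite set $D$ of ``small'' arguments is determined independently of $\varepsilon'$ and no circularity arises between the choice of $\varepsilon'$ and the radius beyond which the angular estimate holds. The accompanying nuisance is the branch-cut bookkeeping for $\Arg$, which the preliminary reduction to $\psi_0=0$ is designed to neutralize.
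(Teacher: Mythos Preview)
Your proof is correct and follows essentially the same strategy as the paper's: reduce by rotation (you normalize $\psi_0=0$, the paper sets $A=1$), observe that the translated apex-$B$ sector asymptotically coincides with an origin-apex sector, and then adjust the half-angle to avoid the finitely many $\lambda_n$ of small modulus. Your version is somewhat more explicit---you quantify the angular drift via $|\Arg(w-B)-\Arg(w)|\le\arcsin(|B|/|w|)$ and, as you note, fix the cutoff $R$ using the margin $\varepsilon/2$ \emph{before} choosing $\varepsilon'$, which cleanly avoids the circularity between $\varepsilon'$ and the radius threshold that the paper's more informal argument leaves implicit.
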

\begin{proof}
Notice that multiplication by $A$ represents a simple rotation by $\arg A$ and, therefore, Definition \ref{defzeta} $(iii)$ holds for $\{\lambda_n\}_{n\in\N}$ in the sector obtained by rotating $\mathcal{I}^c_\varepsilon$ by the same amount (here the superscript $c$ denotes the set complement). So it suffices to consider $A=1$. In addition, if the origin is still contained in the image of the sector $\mathcal{I}^c_\varepsilon$ under translation by $B$, the result is trivially true. So, we now assume $A=1$ and the origin is not contained in $\mathcal{I}^c_\varepsilon+B$.

In this setting, consider a ray emanating from the origin which is fully contained in $\mathcal{I}^c_\varepsilon+B$ for sufficiently large $|z|$. Since this ray will intersect $\mathcal{I}^c_\varepsilon+B$ at a finite distance, say $r>0$, the ray can further be chosen such that it does not intersect any of the terms of the sequence $\{\lambda_n\}_{n\in\N}$ as only finitely many can lie in the ball centered at the origin with radius $r$. Denote the argument defining such a ray by $\Psi'\in[-\pi,\pi)$. Now, consider the elements of the sequence $\{\lambda_n\}_{n\in\N}$, belonging to the ball of radius $r$, whose argument is closest to the argument $\Psi'$ of the ray $te^{i\Psi'}$. These elements (which are necessarily finite) are collectively denoted by $\lambda$ and their argument is $\nu$. Choosing $\nu>\varepsilon'>0$ defines a sector $\mathcal{I}^c_{\varepsilon'}$ about this line (i.e., the sector defined by $\Arg(z)\in[\Psi'-\varepsilon',\Psi'+\varepsilon']$) such that no members of $\{\lambda_n\}_{n\in\mathbb{N}}$ lie in this sector and $\mathcal{I}^c_{\varepsilon'}\subset\mathcal{I}^c_\varepsilon+B$ for large enough $|z|$. In particular, Definition \ref{defzeta} $(iii)$ holds for these choices.
\end{proof}

This implies that the $\z$-function of the sequence $S_{A,B}$ is well-defined and given by
\begin{align}
    \z_{S_{A,B}}(s)=\sum_{n=1}^{\infty}\lambda_n^{-s}=\sum_{n=1}^{\infty}(Aa_n+B)^{-s},\quad \Re(s)>\alpha.
\end{align}
By introducing the {\it shifted} $\z$-function of the sequence $S$ by an appropriate complex number $\mu$ 
\begin{align}
    \z_{S}(s;\mu)=\sum_{n=1}^{\infty}(a_n+\mu)^{-s},
\end{align}
we can write, since $A\neq 0$, 
\begin{align}\label{4.7}
   \z_{S_{A,B}}(s)=A^{-s}\z_{S}(s;B/A).
\end{align}
It is clear that the pole structure of $\z_{S_{A,B}}(s)$ is the same as the one for the shifted $\z$-function $\z_{S}(s;B/A)$, which, in turn, can be obtained from the analysis of $\z_{S}(s)$.

In fact, if the elements of the sequence $S$ are given as zeros of the characteristic function $F(z)$, then the terms $\{a_n+(B/A)\}_{n\in\N}$ are given as zeros of the characteristic function $F(z-(B/A))$. By computing the large-$z$ asymptotic expansion of $\ln\,F(z-(B/A))$ in terms of the one for $\ln\,F(z)$, we can obtain the analytic continuation of the shifted $\z$-function in terms of the one for $\z_S(s)$. The main result shows that the large-$z$ asymptotic expansion of $\ln\,F(z-(B/A))$ is of the same form as the one for $\ln\,F(z)$, a property which allows us to easily extend the theorems regarding the pole structure of $\z_S(s)$ to the shifted $\z$-function $\z_{S}(s;B/A)$. 

Throughout this section we choose the branch of $\ln\,F(z-(B/A))$ in $\mathcal{I}^c_{\varepsilon'}$ that agrees on $(\mathcal I^c_{\varepsilon} + B/A) \cap \mathcal I^c_{\varepsilon'}$ with the one selected for $\ln\,F(z)$.  We, then, have the following:
\begin{theorem}\label{t4.2}
    Let $F(z)$ be the characteristic function of the sequence $S$ in Definition \ref{defzeta}. Let $\ln\,F(z)$ satisfy the Assumption \ref{assump2.9}. Then the logarithm of the characteristic function $F(z-(B/A))$ associated with the shifted sequence $\{a_n+(B/A)\}_{n\in\N}$ has the large-$z$ asymptotic expansion 
    \begin{align}\label{t4.2a}
        \ln\,F(z-(B/A))=\sum_{j=0}^{N}\sum_{k=0}^{M}\Omega_{j,k}(A,B)z^{\alpha-(j/m)}\ln^{k}z+o\left(z^{\alpha-(N/m)-\delta}\right)\quad z\to\infty,\quad  z\in\mathcal{ I}_{\varepsilon'}^c.
    \end{align}
\end{theorem}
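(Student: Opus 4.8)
The plan is to reduce the claim to the known expansion of Assumption~\ref{assump2.9}, evaluated at the shifted point $w = z - c$ with $c = B/A$, and then re-expand in powers of $z$. First I would fix the sector and branches: for $z \in \mathcal{I}_{\varepsilon'}^c$ with $|z|$ large, the containment $\mathcal{I}_{\varepsilon'}^c \subset \mathcal{I}_\varepsilon^c + c$ furnished by the preceding Lemma (applied to the shift by $c$) guarantees $w = z - c \in \mathcal{I}_\varepsilon^c$, so that Assumption~\ref{assump2.9} applies and yields
\begin{equation}
\ln\,F(z-c) = \sum_{j=0}^N \sum_{k=0}^M d_{j,k}\, w^{\alpha - (j/m)} \ln^k w + o\big(w^{\alpha - (N/m) - \delta}\big), \qquad w = z - c,
\end{equation}
with the branch of $\ln\,F$ being the one selected just before the theorem, chosen consistently with the branches of $w^{\alpha-(j/m)}$ and $\ln w$ used below.

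The core observation is that the asymptotic scale $\{\, z^{\alpha - (j/m)} \ln^k z : j \in \N_0,\ 0 \le k \le M \,\}$ is \emph{closed} under the shift $z \mapsto z - c$. Indeed, factoring $w^{\alpha - (j/m)} = z^{\alpha - (j/m)}(1 - c/z)^{\alpha - (j/m)}$ and invoking the generalized binomial series
\begin{equation}
(1 - c/z)^{\beta} = \sum_{\ell=0}^\infty \binom{\beta}{\ell}(-c)^\ell z^{-\ell}, \qquad \beta = \alpha - (j/m),
\end{equation}
together with $\ln w = \ln z + \ln(1 - c/z)$ and $\ln(1 - c/z) = -\sum_{\ell \ge 1} c^\ell z^{-\ell}/\ell = O(z^{-1})$, each building block $w^{\alpha - (j/m)}\ln^k w$ expands—after applying the binomial theorem to $(\ln z + \ln(1-c/z))^k$—into a series whose general term is a constant times $z^{\alpha - (j/m) - \ell}\ln^{k'} z$ with $\ell \in \N_0$ and $0 \le k' \le k \le M$. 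Since $z^{\alpha - (j/m) - \ell} = z^{\alpha - ((j + \ell m)/m)}$, every resulting power is again of the admissible form $z^{\alpha - (j'/m)}$, and the logarithmic degree never exceeds $M$. This is the structural heart of the proof.

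It then remains to collect coefficients and control the tail. Grouping the doubly-indexed expansion by the power $z^{\alpha - (j'/m)}$ and logarithmic degree $\ln^{k'} z$ with $j' \le N$, the coefficient is a finite sum of the $d_{j,k}$ over the contributing pairs (those with $j \le j'$, $j \equiv j' \pmod{m}$, and $k \ge k'$), weighted by the binomial and combinatorial factors depending on $c = B/A$; this defines $\Omega_{j',k'}(A,B)$. For the error, the transformed remainder satisfies $o(w^{\alpha - (N/m) - \delta}) = o(z^{\alpha - (N/m) - \delta})$, since $w/z \to 1$ gives $w^{\alpha - (N/m) - \delta} = z^{\alpha - (N/m) - \delta}(1 + O(z^{-1}))$ uniformly in the sector, while every discarded term of the binomial/logarithmic expansions carries a power $z^{\alpha - (j'/m)}$ with $j' > N$, hence at most $z^{\alpha - ((N+1)/m)}$. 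Choosing $\delta \le 1/m$ (which is permissible, as shrinking $\delta$ preserves Assumption~\ref{assump2.9}, and the admissible powers are spaced by $1/m$) guarantees $\alpha - ((N+1)/m) \le \alpha - (N/m) - \delta$, so all discarded terms are absorbed into the remainder, yielding \eqref{t4.2a}.

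The hard part is precisely this last bookkeeping: one must ensure that no term produced by the shift falls into the gap $(\alpha - (N/m) - \delta,\ \alpha - (N/m))$ lying between the last retained power and the remainder order. This is the only point where the precise value of $\delta$ and the spacing $1/m$ of the scale genuinely matter; once closure of the scale under shifting is established, the algebra producing the explicit coefficients $\Omega_{j',k'}(A,B)$ is routine.
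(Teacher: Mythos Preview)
Your proposal is correct and follows essentially the same route as the paper's proof: substitute $w=z-c$ into Assumption~\ref{assump2.9}, re-expand each $w^{\alpha-(j/m)}\ln^k w$ via the generalized binomial series together with $\ln w=\ln z+\ln(1-c/z)$, observe that the scale $\{z^{\alpha-(j'/m)}\ln^{k'}z\}$ is closed under this operation, and regroup. The paper carries the algebra further---using Stirling numbers of the first kind to expand $\ln^k(z-c)$---to obtain the explicit formula \eqref{4.19} for $\Omega_{j,k}(A,B)$, which is needed in the subsequent theorems; note also that your error-control step requires $\delta<1/m$ strictly rather than $\delta\le 1/m$, since the discarded terms carry factors $\ln^{k'}z$ with $k'\le M$, and $z^{\alpha-(N+1)/m}\ln^{M}z$ is not $o\big(z^{\alpha-(N/m)-\delta}\big)$ when $\delta=1/m$ and $M\ge 1$.
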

\begin{proof}
    Since $\ln\,F(z)$ satisfies the Assumption \ref{assump2.9}, the large-$z$ asymptotic expansion of $\ln\,F(z-(B/A))$ is of the form
    \begin{align}\label{4.9}
    \ln\,F(z-B/A)=\sum_{j=0}^N  \sum_{k=0}^{M} d_{j,k} (z-(B/A))^{\alpha-(j/m)}\ln^{k}(z-(B/A))+o\left(z^{\alpha-(N/m)-\delta}\right),\quad z\to\infty, \  z\in\mathcal{I}^c_{\varepsilon'}.    
    \end{align}
Now, as $z\to\infty$,
\begin{align}
 \ln^{k}(z-(B/A))&=\sum_{l=0}^{k}\binom{k}{l}\ln^{l}z\sum_{n=0}^{N}\frac{(k-l)!}{(k-l+n)!}s(k-l+n,k-l)(-B/A)^{k-l+n}z^{-n-k+l}+o\left(z^{-N-\delta}\right),
\end{align}
where we have utilized the generating function for the Stirling numbers of the first kind $s(n,k)$ \cite[Eq.~26.8.8]{DLMF}.
In addition, as $z\to\infty$, we have
\begin{align}
 (z-(B/A))^{\alpha-(j/m)}=z^{\alpha-(j/m)}\sum_{n=0}^{N}\binom{\alpha-(j/m)}{n}(-B/A)^{n}z^{-n}+o\left(z^{\alpha-(j/m)-N}\right).   
\end{align}
These last two expansions allow us to write
\begin{align}\label{4.12}
   (z-(B/A))^{\alpha-(j/m)}\ln^{k}(z-(B/A))&=z^{\alpha-(j/m)}\sum_{l=0}^{k}\binom{k}{l}\ln^{l}z\sum_{p=0}^{N}\mu_{p,l,k,j}z^{-p-k+l}+o(z^{\alpha-(j/m)-N-\delta}),
\end{align}
where, by the Cauchy product formula,
\begin{align}
\mu_{p,l,k,j}=(-B/A)^{k-l+p} \sum_{n=0}^{p}\frac{(k-l)!}{(k-l+n)!}\binom{\alpha-(j/m)}{p-n}s(k-l+n,k-l).
\end{align}

By substituting \eqref{4.12} in \eqref{4.9} and by using the expression 
\begin{align}
    \sum_{k=0}^{M}a_k\sum_{l=0}^{k}b_{l,k}\,\ln^{l}z=\sum_{k=0}^{M}\ln^{l}z\left(\sum_{n=0}^{M-k}a_{n+k}b_{k,n+k}\right),\quad a_k,b_{l,k}\in\C,
\end{align}
we obtain an asymptotic expansion ordered in increasing powers of $\ln\,z$
\begin{align}\label{4.15a}
\ln\,F(z-(B/A))=\sum_{k=0}^{M}\ln^{k}z\left[\sum_{n=0}^{M-k}\binom{n+k}{k}\sum_{j=0}^{N}\omega_{j,n,k}z^{\alpha-n-(j/m)}+o(z^{\alpha-M+k-(N/m)})\right],    
\end{align}
with 
\begin{align}\label{4.15b}
    \omega_{p,n,k}=\sum_{l=0}^{\lfloor p/m\rfloor}d_{p-lm,n+k}\mu_{l,k,n+k,p-lm}.
\end{align}
By reorganizing the sums in square parentheses in \eqref{4.15a} in decreasing powers of $z$ using the expression
\begin{align}
  \sum_{l=0}^{N}a_l\sum_{j\geq 0}b_{j,l}z^{-l-(j/m)}=\sum_{p\geq 0}z^{-(p/m)}\left(\sum_{l=0}^{\textrm{min}\{\lfloor p/m\rfloor,N\}}a_l b_{p-lm,l}\right),\quad a_l,b_{j,l}\in\C,
\end{align}
one finally obtains
\begin{align}
 \ln\,F(z-(B/A))=\sum_{j=0}^{N}\sum_{k=0}^{M}\Omega_{j,k}(A,B)z^{\alpha-(j/m)}\ln^{k}z+o\left(z^{\alpha-(N/m)-\delta}\right),  
\end{align}
where
\begin{align}\label{4.19}
 \Omega_{j,k}(A,B)=\sum_{l=0}^{\textrm{min}\{\lfloor j/m\rfloor,M-k\}}\binom{l+k}{k}\omega_{j-lm,l,k},  
\end{align}
completing the proof.
\end{proof}

Since the large-$z$ asymptotic expansion of $\ln\,F(z-(B/A))$ is of the same form as the one for $\ln\,F(z)$ in Assumption \ref{assump2.9}, all the results of Section \ref{continuation} hold true for the shifted $\z$-function $\z_{S}(s;B/A)$ with the replacement $d_{j,k}\to\Omega_{j,k}(A,B)$ and other simple modifications. For instance, by repeating the proof of Theorems \ref{t2.11} and \ref{t2.12}, with the obvious modifications, we obtain the following:
\begin{theorem}\label{t4.3}
    Let Theorem \ref{t4.2} hold and let $\bar{k}=\textrm{max}\{0,\ldots,M\}$ such that $\Omega_{j,\bar{k}}(A,B)\neq 0$. Then the following holds regarding the pole structure of $\z_{S_{A,B}}(s)$$:$
    \begin{equation}
\alpha-(j/m) =\begin{array}{l}
\begin{cases}
\text{pole of order $\bar{k}+1$}, & \text{if }(\alpha-(j/m))\notin\mathbb{Z},\\
\text{pole of order $\bar{k}$}, & \text{if }(\alpha-(j/m))\in\mathbb{Z}\backslash \{0\},
\end{cases}
\end{array}
\end{equation}
where a pole of order $0$ is a regular point. The residue of $\zeta_{S_{A,B}}(s)$ at the pole $\alpha-(j/m)$ is given by 
\begin{align}
    \Res[\z_{S_{A,B}}(s);s=\alpha-(j/m)]=A^{-\alpha+(j/m)}\Res[\z_{S}(s;B/A);s=\alpha-(j/m)].
\end{align}

Moreover, $s=0$ is a regular point for $\zeta_{S_{A,B}}(s)$ whenever $M=\{0,1\}$. In this case we have
\begin{align}\label{4.22}
\zeta_{S_{A,B}}(0)&=\begin{cases}
0, & \text{if }\alpha-(j/m)\neq0 \text{ for any } j\in\{0,1,\dots,N\},\\
\Omega_{j',1}(A,B),& \text{if }\alpha-(j'/m)=0 \text{ for some } j'\in\{0,1,\dots,N\},
\end{cases}
\end{align}
noting that for $M=0$, one must have $\Omega_{j',1}(A,B)=0$.
For $M\geq 2$, $\zeta_{S_{A,B}}(s)$ develops, in general, a pole of order $M-1$ at $s=0$ with residue  
\begin{align}
    \Res[\z_{S_{A,B}}(s);s=0]=\Res[\z_{S}(s;B/A);s=0].
\end{align}
Here, $\Res[\z_{S}(s;B/A);s=\alpha-(j/m)]$ and $\Res[\z_{S}(s;B/A);s=0]$ can be obtained from \eqref{2.53a} and \eqref{2.54a}, respectively, by replacing $d_{j,k}$ with $\Omega_{j,k}(A,B)$. 
\end{theorem}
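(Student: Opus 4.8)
The plan is to reduce everything to the already-analyzed shifted $\zeta$-function $\zeta_S(s;B/A)$ and then transport the conclusions through the exact factorization \eqref{4.7}, namely $\zeta_{S_{A,B}}(s)=A^{-s}\zeta_S(s;B/A)$. The starting observation is that the shifted sequence $\{a_n+(B/A)\}_{n\in\N}$ has characteristic function $F(z-(B/A))$, and Theorem \ref{t4.2} guarantees that $\ln\,F(z-(B/A))$ obeys Assumption \ref{assump2.9} with the coefficients $d_{j,k}$ replaced by $\Omega_{j,k}(A,B)$. Hence every result of Section \ref{continuation} applies \emph{verbatim} to $\zeta_S(s;B/A)$ under the substitution $d_{j,k}\to\Omega_{j,k}(A,B)$.

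First I would apply Theorem \ref{t2.11} to $\zeta_S(s;B/A)$: this locates its poles at $s=\alpha-(j/m)$, fixes their orders through $\bar{k}=\max\{0,\dots,M\}$ with $\Omega_{j,\bar{k}}(A,B)\neq0$, and yields the value $\zeta_S(0;B/A)$ (regular for $M\in\{0,1\}$, equal to $\Omega_{j',1}(A,B)$ in the resonant case, and an order-$(M-1)$ pole for $M\geq2$). Theorem \ref{t2.12} then supplies the residues of $\zeta_S(s;B/A)$ directly from \eqref{2.53a} and \eqref{2.54a} with $d_{j,k}\to\Omega_{j,k}(A,B)$. The structural statements transfer to $\zeta_{S_{A,B}}$ immediately: since $A^{-s}$ is entire and zero-free, multiplication by it preserves both the location and the order of every pole, and evaluating at $s=0$ gives $\zeta_{S_{A,B}}(0)=A^{0}\zeta_S(0;B/A)=\zeta_S(0;B/A)$, which is precisely \eqref{4.22}.

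The residue statements are where the prefactor $A^{-s}$ must be handled with care. At a \emph{simple} pole $s_0=\alpha-(j/m)$ the factor $A^{-s}$ is holomorphic and nonzero, so
\begin{equation*}
\Res[\zeta_{S_{A,B}}(s);s=s_0]=A^{-s_0}\Res[\zeta_S(s;B/A);s=s_0]=A^{-\alpha+(j/m)}\Res[\zeta_S(s;B/A);s=s_0],
\end{equation*}
and at $s=0$ the scalar is $A^{0}=1$, reproducing the claimed relations in the simple-pole regime.

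The main obstacle is the higher-order case. If $\zeta_S(s;B/A)$ has a pole of order $p\geq2$ at $s_0$ with Laurent coefficients $g_{-1},\dots,g_{-p}$, then expanding $A^{-s}=A^{-s_0}\sum_{m\geq0}\frac{(-\ln\,A)^m}{m!}(s-s_0)^m$ gives
\begin{equation*}
\Res[A^{-s}\zeta_S(s;B/A);s=s_0]=A^{-s_0}\sum_{m=0}^{p-1}\frac{(-\ln\,A)^m}{m!}\,g_{-1-m},
\end{equation*}
so the clean scaling by $A^{-s_0}$ captures only the $m=0$ term and one must track the additional $\ln\,A$-corrections (this includes $s=0$ once $M\geq3$, where the pole has order $M-1\geq2$). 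I would therefore isolate the higher-order poles as the delicate step, either restricting the simple residue identity to simple poles or carrying the corrections explicitly. Equivalently, one can verify the relation by applying Theorem \ref{t2.12} directly to the characteristic function $G(z)=F((z-B)/A)$ of $S_{A,B}$; since $G(z)=F(z/A-(B/A))$ is obtained from the shifted characteristic function $F(z-(B/A))$ by the rescaling $z\mapsto z/A$, its asymptotic coefficients are built from the $\Omega_{j,k}(A,B)$ with a binomial mixing across the index $k$ that reproduces exactly the $\ln\,A$-terms above, confirming that the two computational routes agree and that the stated scaling is the faithful answer precisely when the pole is simple.
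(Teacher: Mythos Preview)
Your approach is exactly the paper's: invoke Theorem~\ref{t4.2} so that Assumption~\ref{assump2.9} applies to $\ln\,F(z-(B/A))$ with $d_{j,k}\to\Omega_{j,k}(A,B)$, then transfer Theorems~\ref{t2.11} and~\ref{t2.12} to $\zeta_S(s;B/A)$ and pull the results through the factorization \eqref{4.7}. The paper literally writes only ``by repeating the proof of Theorems~\ref{t2.11} and~\ref{t2.12}, with the obvious modifications, we obtain the following,'' so you have fully reconstructed the intended argument.

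Your additional paragraph on higher-order poles is a genuine observation that the paper does not confront. You are right that for a pole of order $p\geq 2$ at $s_0$, the Laurent expansion of $A^{-s}$ contributes cross terms, so $\Res[A^{-s}\zeta_S(s;B/A);s=s_0]$ equals $A^{-s_0}\Res[\zeta_S(s;B/A);s=s_0]$ only when $p=1$ (or when the deeper Laurent coefficients happen to vanish). The paper itself tacitly acknowledges this mixing one theorem later, where \eqref{4.26} in the proof of Theorem~\ref{t4.3a} reads $\zeta'_{S_{A,B}}(0)=-\ln A\,\zeta_S(0;B/A)+\zeta'_S(0;B/A)$; yet the residue identity in Theorem~\ref{t4.3} is stated without that caveat. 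So the qualification you add is correct and the clean scaling should indeed be read as exact for simple poles (equivalently, for $\bar k\leq 1$ at noninteger $s_0$, $\bar k=1$ at nonzero integers, and $M\leq 2$ at $s=0$), while in the general case one must carry the $\ln A$-corrections you wrote down.
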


In addition, we can easily compute the derivative of $\z_{S_{A,B}}(s)$ at $s=0$ (in the case this point is a regular point) by using a slight modification of Theorem \ref{t3.5}. 
Note that $\ln\,F(-B/A)$ is defined via analytic continuation of the branch of $\ln\,F(z)$ that is holomorphic in the sector $\mathcal{I}_{\varepsilon}^c$, extended along a path through $\mathcal{I}_{\varepsilon'}^c$.
More explicitly we have the following:
\begin{theorem}\label{t4.3a}
    Let Theorem \ref{t4.2} hold with $M=1$. Then $\z_{S_{A,B}}(s)$ is regular at $s=0$ and
    \begin{align}
    \zeta'_{S_{A,B}}(0)=-\ln\, F(-B/A),   
    \end{align}
    if $\alpha-(j/m)\neq0$ for any $j\in\{0,1,\dots,N\}$ and 
    \begin{align}\label{4.20a}
    \zeta'_{S_{A,B}}(0)=i\pi\,\Omega_{j',1}(A,B)+\Omega_{j',0}(A,B)-\Omega_{j',1}(A,B)\,\ln A-\ln\, F(-B/A),
    \end{align}
    if $\alpha-(j'/m)=0$ for some $j'\in\{0,1,\dots,N\}$.  
\end{theorem}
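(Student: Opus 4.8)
The plan is to exploit the factorization \eqref{4.7}, namely $\z_{S_{A,B}}(s)=A^{-s}\z_{S}(s;B/A)$, which reduces the derivative at the origin to differentiating a product and then applying Theorem \ref{t3.5} to the shifted $\z$-function. First I would differentiate \eqref{4.7} in $s$ and set $s=0$, obtaining by the product rule
\[
\z'_{S_{A,B}}(0)=-(\ln A)\,\z_{S}(0;B/A)+\frac{d}{ds}\z_{S}(s;B/A)\Big|_{s=0},
\]
where the first summand isolates the contribution of the prefactor $A^{-s}$ and the second is the derivative of the shifted $\z$-function at zero.

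The crucial observation is that $\z_{S}(s;B/A)$ is exactly the $\z$-function of the sequence $\{a_n+(B/A)\}_{n\in\N}$, whose characteristic function is $F(z-(B/A))$. By the lemma preceding Theorem \ref{t4.2} this shifted sequence again satisfies Definition \ref{defzeta}, and by Theorem \ref{t4.2} its logarithm $\ln\,F(z-(B/A))$ obeys Assumption \ref{assump2.9} with the same $m$ and $N$ and with $M=1$, the only change being $d_{j,k}\to\Omega_{j,k}(A,B)$. Consequently Theorem \ref{t3.5} applies verbatim to $\z_{S}(s;B/A)$: it is regular at $s=0$, and since the associated characteristic function evaluated at the origin is $F(-B/A)$, one reads off
\[
\frac{d}{ds}\z_{S}(s;B/A)\Big|_{s=0}=
\begin{cases}
-\ln\,F(-B/A), & \alpha-(j/m)\neq0\ \text{for all }j,\\
i\pi\,\Omega_{j',1}(A,B)+\Omega_{j',0}(A,B)-\ln\,F(-B/A), & \alpha-(j'/m)=0\ \text{for some }j'.
\end{cases}
\]

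Finally I would supply the value $\z_{S}(0;B/A)$ entering the prefactor term. Evaluating \eqref{4.7} at $s=0$ gives $\z_{S_{A,B}}(0)=\z_{S}(0;B/A)$, so by \eqref{4.22} this is $0$ when $\alpha-(j/m)\neq0$ for every $j$ and equals $\Omega_{j',1}(A,B)$ when $\alpha-(j'/m)=0$ for some $j'$. Substituting these two cases into the product-rule identity reproduces $-\ln\,F(-B/A)$ in the first case and $i\pi\,\Omega_{j',1}(A,B)+\Omega_{j',0}(A,B)-\Omega_{j',1}(A,B)\ln A-\ln\,F(-B/A)$ in the second, which is precisely \eqref{4.20a}. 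I expect the only genuine subtlety to be the branch bookkeeping: the summand $-(\ln A)\,\z_{S}(0;B/A)$ depends on the chosen branch of $\ln A$ through $A^{-s}$, while $\ln\,F(-B/A)$ must be read as the analytic continuation of the branch holomorphic in $\mathcal I^c_{\varepsilon}$ along a path through $\mathcal I^c_{\varepsilon'}$, as stipulated just before the theorem; one should verify that these choices are mutually consistent with the identity $\lambda_n^{-s}=A^{-s}(a_n+(B/A))^{-s}$ underlying \eqref{4.7}, so that the factorization holds on the nose rather than up to integer multiples of $2\pi i$.
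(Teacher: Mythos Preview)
Your proposal is correct and follows essentially the same route as the paper: both differentiate the factorization \eqref{4.7} at $s=0$, invoke Theorem \ref{t3.5} for the shifted $\z$-function $\z_S(s;B/A)$ with the replacement $d_{j,k}\to\Omega_{j,k}(A,B)$ justified by Theorem \ref{t4.2}, and then insert the value $\z_S(0;B/A)=\z_{S_{A,B}}(0)$ from \eqref{4.22}. Your closing remark on the branch conventions for $\ln A$ and $\ln\,F(-B/A)$ is also in line with the paper's stipulation preceding the theorem.
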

\begin{proof}
    From equation \eqref{4.7} we have
    \begin{align}\label{4.26}
     \z'_{S_{A,B}}(0)=-\ln\,A\,\z_{S}(0;B/A)+\z'_{S}(0;B/A).   
    \end{align}
By using an argument similar to the one employed to prove 
Theorem \ref{t3.5} we find
\begin{align}
\scalebox{0.9}{$\zeta'_{S}(0;B/A)=\begin{cases}
-\ln\, F(-B/A), & \text{if }\alpha-(j/m)\neq0 \text{ for any } j\in\{0,1,\dots,N\},\\
i\pi\,\Omega_{j',1}(A,B)+\Omega_{j',0}(A,B)-\ln\, F(-B/A),& \text{if }\alpha-(j'/m)=0 \text{ for some } j'\in\{0,1,\dots,N\}.
\end{cases}$}
\end{align}
By using this derivative and equation \eqref{4.22} (noting that $\z_{S_{A,B}}(0)=\z_{S}(0,B/A)$) in the relation \eqref{4.26} we obtain the claim.  
\end{proof}

Finally, it is not difficult to modify the proof of Theorem \ref{t3.7} and use \eqref{4.7} to verify the next result.
\begin{theorem}\label{t4.4}
Let Theorem \ref{t4.2} hold to all orders, that is, for all $M,N\in\mathbb{N}$. Furthermore, let $n\in\Z\backslash\{0\}$ with $n\leq\alpha$ be a regular point of the function $\z_{S_{A,B}}(s)$. Then
\begin{align}
\zeta_{S_{A,B}}\hspace{-1pt}(n)\hspace{-2pt}=\hspace{-2pt}\begin{cases}
-nA^{-n}B_n, & \hspace{-4pt} \text{if }\alpha-(j/m)\neq n \text{ for any } j\in\{0,1,\dots,N\}\,\textrm{and}\,1\leq n\leq\alpha ,\\
nA^{-n}(\Omega_{m(\alpha-n),0}(A,B)\hspace{-2pt}-\hspace{-2pt}B_n),& \hspace{-4pt} \text{if }\alpha-(\tilde{j}/m)=n \text{ for some } \tilde{j}\in\{0,1,\dots,N\}\,\textrm{and}\,1\leq n\leq\alpha ,\\
0, & \hspace{-4pt} \text{if }\alpha-(j/m)\neq n \text{ for any } j\in\{0,1,\dots,N\}\,\textrm{and}\, n\leq0 ,\\
nA^{-n}\Omega_{m(\alpha-n),0}(A,B),& \hspace{-4pt} \text{if }\alpha-(\tilde{j}/m)=n \text{ for some } \tilde{j}\in\{0,1,\dots,N\}\,\textrm{and}\,n\leq 0 ,
\end{cases}
\end{align}
where $B_{n}$ are the coefficients of the Taylor expansion of $(d/dz)\,\ln\,F(z-(B/A))$ about the point $z=0$.     
\end{theorem}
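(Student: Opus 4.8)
The plan is to reduce everything to the shifted $\z$-function $\z_{S}(s;B/A)$ through the factorization \eqref{4.7} and then apply Theorem \ref{t3.7} to this shifted function. The first step is to recall from \eqref{4.7} that $\z_{S_{A,B}}(s)=A^{-s}\z_{S}(s;B/A)$. Since $A\neq0$, the prefactor $A^{-n}$ is single-valued at any $n\in\Z\backslash\{0\}$, and because $\z_{S_{A,B}}(s)$ and $\z_{S}(s;B/A)$ share the same pole structure (Theorem \ref{t4.3}), the regularity of $n$ for the former is equivalent to its regularity for the latter. Evaluating at $s=n$ thus gives $\z_{S_{A,B}}(n)=A^{-n}\z_{S}(n;B/A)$.

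The second step is to check that Theorem \ref{t3.7} applies to $\z_{S}(s;B/A)$. The terms $\{a_n+(B/A)\}_{n\in\N}$ are exactly the zeros of the characteristic function $F(z-(B/A))$, and by Theorem \ref{t4.2} the function $\ln\,F(z-(B/A))$ satisfies Assumption \ref{assump2.9} to all orders, with the coefficients $d_{j,k}$ replaced by $\Omega_{j,k}(A,B)$. Hence the whole analytic-continuation scheme \eqref{2.47} of Theorem \ref{t2.11} carries over to $\z_{S}(s;B/A)$: the contribution $Q(s)$ now produces the Taylor coefficients $B_n$ of $(d/dz)\ln\,F(z-(B/A))$ at $z=0$ in place of $b_n$, while $\mathcal{L}_{\textrm{asy},N}(s)$ carries the asymptotic data through $\Omega_{j,k}(A,B)$ in place of $d_{j,k}$.

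Running the argument of Theorem \ref{t3.7} with these substitutions then produces the four cases for $\z_{S}(n;B/A)$, namely $-nB_n$ and $n(\Omega_{m(\alpha-n),0}(A,B)-B_n)$ for $1\leq n\leq\alpha$, and $0$ and $n\,\Omega_{m(\alpha-n),0}(A,B)$ for $n<0$, the alternatives depending on whether $\alpha-(j/m)$ equals $n$ for some admissible index. Multiplying each of these by the prefactor $A^{-n}$ from the first step yields precisely the asserted formula for $\z_{S_{A,B}}(n)$.

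The only genuine subtlety is bookkeeping with the branch of the logarithm: one must take the branch of $\ln\,F(z-(B/A))$ on $\mathcal{I}_{\varepsilon'}^c$ compatible with that of $\ln\,F(z)$, as already fixed before Theorem \ref{t4.2}. I expect this to cause no real obstruction, however, since the quantities entering the final formula---the coefficients $B_n$ of the logarithmic derivative and the asymptotic coefficients $\Omega_{j,k}(A,B)$---are all branch-independent, and at integer $s=n\neq0$ the $\sin(\pi s)$ prefactor in \eqref{2.50} removes the branch-cut contribution entirely.
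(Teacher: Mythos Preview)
Your proposal is correct and follows essentially the same approach as the paper: the paper simply states that one should ``modify the proof of Theorem \ref{t3.7} and use \eqref{4.7}'', which is precisely what you do---factor out $A^{-n}$ via \eqref{4.7}, then rerun Theorem \ref{t3.7} for the shifted function with $\Omega_{j,k}(A,B)$ in place of $d_{j,k}$ and $B_n$ in place of $b_n$. Your write-up is in fact more detailed than the paper's one-line justification.
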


One of the consequences of Theorem \ref{t2.11} is that the order of any pole of $\z_S(s)$ cannot exceed $M+1$, where $M$ is the highest power of $\ln\,z$ appearing in the asymptotic expansion \eqref{2.42}. The results of Theorem \ref{t4.2} allow us to conclude that the same is true for the shifted $\z$-function; that is, $\z_{S_{A,B}}(s)$ cannot develop poles of order higher than those of $\z_{S}(s)$. The relation between the pole structure of $\z_{S}(s)$, intended as the description of the position and order of its poles, and that of $\z_{S_{A,B}}(s)$ is generally non-trivial. In fact, the pole structure of $\z_{S}(s)$ and that of $\z_{S_{A,B}}(s)$ depend explicitly on the coefficients $d_{j,k}$ of the expansion \eqref{2.42} and $\Omega_{j,k}$ of the expansion \eqref{t4.2a}, according to Theorems \ref{t2.11} and \ref{t4.3}, respectively. By using the expressions \eqref{4.15b} and \eqref{4.19} we can write the relation between these coefficients as
\begin{align}\label{4.29}
        \Omega_{j,p}(A,B)=\sum_{l=0}^{\textrm{min}\{\lfloor j/m\rfloor,M-p\}}\sum_{n=0}^{\lfloor (j/m)-l\rfloor}\binom{l+p}{p}d_{j-m(l+n),l+p}\mu_{n,p,l+p,j-m(l+n)}.
    \end{align}
This formula implies that there could be cases in which $\zeta_{S}(s)$ has a pole at a given point but the shifted $\z$-function does not. Indeed, it is clear that the terms $\Omega_{j,p}(A,B)$ are expressed as a linear combination of finitely many coefficients $d_{j,p}$ of the asymptotic expansion of $\ln\,F(z)$. It is, then, possible that a particular combination of coefficients $d_{j,p}$ could make one or more terms $\Omega_{j,p}(A,B)$ vanish. This means that a pole that is present for $\z_{S}(s)$ could be absent for $\z_{S_{A,B}}(s)$. For an explicit example of this phenomenon in the special case when $\z_{S}(s)$ has a simple pole at $s=1/2$ see \cite[Sec.~1]{FS25}.

Another interesting observation that can be made regarding \eqref{4.29} is that the coefficients $\Omega_{j,k}$ are written only in terms of the coefficients $d_{l,p}$ with $l\leq j$. This means that a pole of $\z_S(s)$ at the point $s=\alpha-l/m$ can contribute to the pole structure of $\z_{S_{A,B}}(s)$ only in the region $\Re(s)\leq \alpha-l/m$. This makes the first possible pole of $\z_{S_{A,B}}(s)$, namely the one at $s=\alpha$, special since $\z_{S}(s)$ has no poles for $\Re(s)>\alpha$. This simple observation implies that if the rightmost pole is present for $\z_{S}(s)$ then it will necessarily appear also for the shifted zeta function $\z_{S_{A,B}}(s)$.
\begin{corollary}\lb{cor4.7}
    If $s=\alpha$ is a pole of order $q$ for $\z_{S}(s)$, then it is also a pole of the same order for $\z_{S_{A,B}}(s)$. The residue at $s=\alpha$ is
    \begin{align}\label{c4.30}
        \Res[\z_{S_{A,B}}(s);s=\alpha]=A^{-\alpha}\Res[\z_{S}(s);s=\alpha].
    \end{align}
\end{corollary}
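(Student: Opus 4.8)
The plan is to reduce everything to the relation \eqref{4.7}, namely $\zeta_{S_{A,B}}(s) = A^{-s}\zeta_S(s;B/A)$, together with Theorem \ref{t4.3}, which already expresses both the pole structure and the residues of $\zeta_{S_{A,B}}(s)$ in terms of the shifted coefficients $\Omega_{j,k}(A,B)$. The statement concerns only the rightmost pole $s=\alpha$, which corresponds to the index $j=0$ in the asymptotic expansion \eqref{t4.2a}. Thus it suffices to understand the coefficients $\Omega_{0,k}(A,B)$ and compare them with the coefficients $d_{0,k}$ governing the pole of $\zeta_S(s)$ at $s=\alpha$ via Theorem \ref{t2.11}.

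The key step, and the heart of the argument, is to evaluate the general relation \eqref{4.29} at $j=0$ and show that $\Omega_{0,k}(A,B)=d_{0,k}$ for every $k$. First I would observe that at $j=0$ the outer sum over $l$ in \eqref{4.29} runs from $0$ to $\min\{\lfloor 0/m\rfloor,\,M-k\}=0$, collapsing to the single term $l=0$; likewise the inner sum over $n$ runs up to $\lfloor 0/m-0\rfloor=0$, so only $n=0$ survives. What remains is $\Omega_{0,k}(A,B)=\binom{k}{k}d_{0,k}\,\mu_{0,k,k,0}$. It then remains to check that the lone coefficient $\mu_{0,k,k,0}$ equals $1$: in the definition of $\mu_{p,l,k,j}$ the exponent $k-l+p=0$ forces the prefactor $(-B/A)^0=1$, while the surviving $n=0$ term reduces to $\tfrac{0!}{0!}\binom{\alpha}{0}s(0,0)=1$, using $s(0,0)=1$. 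Hence $\Omega_{0,k}(A,B)=d_{0,k}$, which reflects the observation made just before the corollary that a shift can only feed coefficients $d_{l,p}$ with $l\leq j$ into $\Omega_{j,p}$, so the leading ($j=0$) data are left untouched.

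With this identity in hand the conclusion is immediate. Since $\Omega_{0,k}(A,B)=d_{0,k}$ for all $k$, the index $\bar k=\max\{0,\dots,M\}$ with nonvanishing coefficient is the same for $\zeta_S(s)$ and $\zeta_{S_{A,B}}(s)$ at $j=0$; comparing the pole-order rule of Theorem \ref{t2.11} with that of Theorem \ref{t4.3} shows that the order of the pole at $s=\alpha$ agrees, so both equal $q$. For the residue, formula \eqref{2.53a} evaluated at $j=0$ depends only on the coefficients $d_{0,k}$ (resp.\ $\Omega_{0,k}$), so $\Res[\zeta_S(s;B/A);\,s=\alpha]=\Res[\zeta_S(s);\,s=\alpha]$. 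Feeding this into the residue relation of Theorem \ref{t4.3} at $j=0$, whose prefactor $A^{-\alpha+(j/m)}$ becomes $A^{-\alpha}$, yields $\Res[\zeta_{S_{A,B}}(s);\,s=\alpha]=A^{-\alpha}\Res[\zeta_S(s);\,s=\alpha]$, which is \eqref{c4.30}.

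The only genuine obstacle is the bookkeeping in the second paragraph: one must verify carefully that the nested sums in \eqref{4.29} truly degenerate to a single term at $j=0$ and that the remaining Stirling-number factor contributes trivially. Once $\Omega_{0,k}=d_{0,k}$ is established, no further analytic input is required, and the remainder is a direct application of Theorems \ref{t2.11} and \ref{t4.3}.
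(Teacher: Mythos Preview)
Your proposal is correct and follows essentially the same approach as the paper: both arguments use \eqref{4.29} at $j=0$ to deduce $\Omega_{0,k}(A,B)=d_{0,k}$ and then invoke Theorem~\ref{t4.3} (together with the residue formula \eqref{2.53a}) to match pole orders and residues. Your write-up is in fact a bit more explicit than the paper's, since you verify the collapse of the double sum and compute $\mu_{0,k,k,0}=1$ directly, and you correctly note that \eqref{2.53a} at $j=0$ depends on \emph{all} the $d_{0,k}$, not just the top one, which is why the identity $\Omega_{0,k}=d_{0,k}$ for every $k$ is the right statement to establish.
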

\begin{proof}
    The point $s=\alpha$ is a pole of order $q$ for $\z_{S}(s)$ only if $d_{0,q}\neq 0$ when $\alpha\in\Z\backslash\{0\}$ or $d_{0,q-1}\neq 0$ when $\alpha\notin\Z$. By using \eqref{4.29} we immediately see that $\Omega_{0,q}=d_{0,q}$ when $\alpha\in\Z\backslash\{0\}$ and  $\Omega_{0,q-1}=d_{0,q-1}$ when $\alpha\notin\Z$. Theorem \ref{t4.3} then ensures that $s=\alpha$ is a pole of $\z_{S_{A,B}}(s)$ of order $q$ with residue given by \eqref{c4.30} (after noting that  $\Res[\z_{S}(s;B/A);s=\alpha]$ coincides with $\Res[\z_{S}(s);s=\alpha]$ since $\Omega_{0,q}=d_{0,q}$ when $\alpha\in\Z\backslash\{0\}$ and  $\Omega_{0,q-1}=d_{0,q-1}$ when $\alpha\notin\Z$) .   
\end{proof}   

The results proven in this section can be illustrated explicitly by the following argument (which extends the discussion found in \cite[Sec.~1.2]{FS25}): By recalling that $\lambda_n=Aa_n+B$, with $A\neq 0$, and since $\{a_n\}_{n\in\N}$ is an increasing (in modulus) sequence of complex numbers (c.f. $(i)$ in Definition \ref{defzeta}), for any $B\in\C$ we can find an $m\in\N$ such that for all $n>m$, $|B|<|A||a_n|$. By using a negative binomial series
\begin{align}
    (x+c)^{-s}=\sum_{k=0}^{\infty}(-1)^{k}\binom{s+k-1}{k}x^k c^{-s-k},\quad |x|<|c|,
\end{align}
one can then write, for all $\lambda_n$ with $n>m$, 
\begin{align}
    \lambda_{n}^{-s}=A^{-s}\sum_{k=0}^{\infty}\frac{(-1)^{k}}{k!}\left(\frac{B}{A}\right)^k\frac{\Gamma(s+k)}{\Gamma(s)}a_{n}^{-s-k},\quad \Re(s)>0.
\end{align}
Fubini's theorem can then be used to obtain the following relation between $\z_{S_{A,B}}(s)$ and $\z_{S}(s)$, valid for any $m\in\bbN$ such that $|B|<|A||a_m|$, 
\begin{align}\lb{zetarelation}
  \z_{S_{A,B}}(s)=\sum_{n=1}^{m-1}\lambda_n^{-s}+A^{-s}\sum_{k=0}^{\infty}\frac{(-1)^{k}}{k!}\left(\frac{B}{A}\right)^k\frac{\Gamma(s+k)}{\Gamma(s)}\bigg[\z_{S}(s+k)-\sum_{n=1}^{m-1}a_n^{-s-k}\bigg], \quad \Re(s)>\alpha.
\end{align}
By analytic continuation, equation \eqref{zetarelation} can be utilized to study the singularity structure of $\z_{S_{A,B}}(s)$ if the poles of $\z_{S}(s)$ are already known.

For instance, from \eqref{zetarelation} it is easy to verify Corollary \ref{cor4.7}. In fact, assume that $\z_S(s)$ has a pole of order $q$ at $s=\alpha$. Since $\z_S(s)$ is holomorphic for $\Re(s)>\alpha$, the only pole (of the same order) appearing on the right-hand side of \eqref{zetarelation} comes from the $k=0$ term of the series. This implies that $\z_{S_{A,B}}(s)$ acquires a pole of the same order at $s=\alpha$ with residue given by \eqref{c4.30}.    

We would like to point out that while the relation \eqref{zetarelation} offers a very straightforward way of studying the connection between the poles and residues of the $\z$-function of $S$ and those of the shifted $\z$-function, it is not as useful for evaluating $\z_{S_{A,B}}(s)$ at specific points or its derivative at $s=0$ since, in general, we do not assume to know explicitly the elements of the sequence $S$.     

\begin{remark}
The goal of this section is to analyze $\z_{S_{A,B}}(s)$, not as a standalone function, but in terms of the $\z$-function associated with the original sequence $S$. For this reason, we did not present an analog of Theorem \ref{t4} for the $\z$-function of linearly transformed sequences. This can be specifically attributed to the fact that the proof of Theorem \ref{t4} relies on the Taylor series of $F(z)$ about the point $z=0$ and in order to prove a similar theorem corresponding to $\z_{S_{A,B}}(s)$ one would need to evaluate the Taylor series of $F(z)$ about the point $z=B/A$.  
While one can straightforwardly relate the coefficients of the large-$z$ asymptotic expansion of $\ln\,F(z-(A/B))$ to those of $\ln\,F(z)$, the same cannot be said for the corresponding Taylor series. To illustrate this point, simply consider the zeroth order coefficient of the Taylor series of $F(z)$ about $z=B/A$, namely $F(B/A)$. In order to write this coefficient in terms of the coefficients of the Taylor series of $F(z)$ about $z=0$ we would need to use the series $F(B/A)=\sum_{n=0}^{\infty}(F^{(n)}(0)/n!)(B/A)^n$.

Because of this problem, one should simply find the Taylor series of $F(z)$ about $z=B/A$ and then apply Theorem \ref{t4} directly. 
\end{remark}

\section{Examples of \texorpdfstring{$\zeta$}{zeta}-functions of sequences and the AAA algorithm}\label{s5}

We now utilize the results obtained in the previous sections to analyze the structure of the $\z$-function of certain sequences, illustrating the usefulness and power of the general method developed here. We begin with the classic Riemann $\z$-function, then turn to zeros of special functions that one often encounters in physical applications, ending with the AAA algorithm. The branch cut $R_{\Psi}$ will be chosen in the second quadrant throughout these examples unless otherwise stated.

\subsection{The Riemann \texorpdfstring{$\zeta$}{zeta}-function}

As a first example, we consider the $\z$-function associated with the sequence of positive integers $\{n\}_{n\in\N}$, that is the well-known Riemann $\z$-function, $\z_R(s)$. As characteristic function, one can take $F(z)=1/\Gamma(1-z),\ z\in\mathbb{C}$, where $\Gamma(\dott)$ is the standard Gamma function. Then $F(z)$ is entire with zeros at the positive integers and
\begin{align}\label{2.33}
F(z)&=1 - \gamma_E z + (1/12) \big(6 \gamma_E ^2 - \pi^2\big) z^2 + (1/12) \big(-2 \gamma_E ^3 + \gamma_E \pi^2 + 2 \psi''(1)\big)z^3\nonumber\\
&\quad + (1/1440)\big(60 \gamma_E ^4 - 60 \gamma_E ^2 \pi^2 + \pi^4 - 240 \gamma_E \psi''(1)\big) z^4 + O(z^5),
\end{align}
where $\gamma_E$ is the Euler--Mascheroni constant and $\psi(\dott)$ is the digamma function. Theorem \ref{t4} with $\alpha=1$ then yields\footnote{To apply Corollary \ref{c2.6} one would use $F(z)=e^{\gamma_E z}/\Gamma(1-z)$ since $\frac{1}{\Gamma(1-z)}=-\frac{1}{z\Gamma(-z)}=e^{-\gamma_E z}\prod_{k=1}^\infty \left(1-\frac{z}{k}\right)e^{z/k}$.} $($with $c_m$ defined via \eqref{2.33}$)$
\begin{align}
\begin{split}
\zeta_R(n)&=c_{1}c_{n-1}-nc_{n} -\sum_{\ell=2}^{n-1} c_{n-\ell}\zeta_R(\ell),\quad n\in\mathbb{N},\ n>1,\\
\zeta_R(2)&=c_1^2-2c_2=\gamma_E^2-(1/6)\big(6\gamma_E^2-\pi^2\big)=\pi^2/6,\quad \zeta_R(3)=c_1c_2-3c_3-c_1\zeta_R(2)=-\psi''(1)/2.
\end{split}
\end{align}
Using the known formula for positive even values of $\zeta_R(s)$ in terms of Bernoulli numbers, $B_n$, then yields
\begin{equation}
\zeta_R(2n)=c_{1}c_{2n-1}-2nc_{2n} -\sum_{\ell=2}^{2n-1} c_{2n-\ell}\zeta_R(\ell)=\dfrac{|B_{2n}|(2\pi)^{2n}}{2(2n)!},\quad n\in\mathbb{N}.
\end{equation}

Alternatively, one can start with $F(z)=z^{-1/2}\sin\big(\pi z^{1/2}\big)$ to study $\zeta_R(s)$ at even integers and obtain
\begin{equation}
\zeta_R(2n)=\dfrac{(-1)^{n+1} n\pi^{2n}}{(2 n+1)!} +\sum_{\ell=1}^{n-1}\dfrac{(-1)^{n-\ell+1} \pi^{2(n-\ell)}}{(2(n-\ell)+1)!}\zeta_R(2\ell),\quad n\in\mathbb{N},
\end{equation}
where we have used the series expansion
\begin{equation}
z^{-1/2}\sin\big(\pi z^{1/2}\big)=\sum_{k=0}^\infty \dfrac{(-1)^k \pi^{2k+1}}{(2 k+1)!} z^k,\quad z\in\mathbb{C}.
\end{equation}
We can utilize Theorem \ref{t3.5} to find $\zeta'_R(0)$ via the following large-$z$ asymptotic expansion valid for $\Arg(z)>0$:
\begin{align}\label{4.6}
-\ln \, \Gamma(1-z)=\left(z-\frac{1}{2}\right)\ln(z)-(i\pi +1)z-\frac{1}{2}\ln(2\pi)+\frac{i\pi}{2}+\sum_{k=1}^{N}\frac{B_{2k}}{2k(2k-1)z^{2k-1}}+O(z^{-2N-3}).    
\end{align}
Since $d_{1,1}$ is the coefficient of $\ln(t)$ and $d_{1,0}$ is the constant coefficient of the expansion, one finds that $d_{1,1}=-\frac{1}{2}$ and $d_{1,0}=-\frac{1}{2}\ln(2\pi)+\frac{i\pi}{2}$. By substituting these values in \eqref{2.56} and \eqref{2.57} one obtains the well-known results
\begin{align}
    \zeta_{R}(0)=-\frac{1}{2},\quad\textrm{and}\quad\zeta'_R(0)=-\frac{1}{2}\ln(2\pi).
\end{align}

Theorem \ref{t3.7} provides a formula to verify the values of the Riemann zeta function at negative integers. In fact, from \eqref{4.6} it is not difficult to see that the negative integers are regular points of $\z_R(s)$. We can, hence, apply the last of \eqref{3.26} to obtain, for $n\in\N$,
\begin{align}
    \z_{R}(-n)=-n\,d_{1+n,0}=-\frac{B_{n+1}}{n+1},
\end{align}
where the second equality holds since $d_{j,0}=B_j/j(j-1)$ which can be obtained by comparing the general asymptotic expansion \eqref{2.42} to the one in \eqref{4.6}.

\subsubsection{The Hurwitz \texorpdfstring{$\zeta$}{zeta}-function}

We now consider the case of the Hurwitz $\zeta$-function, $\zeta_H(s)$, to illustrate the results of Section \ref{s4}.
The Hurwitz $\z$-function is associated with the sequence of nonnegative integers translated by a complex number $a\in\bbC\backslash(-\bbN_0)$ that is
\begin{align}
    \z_{H}(s,a)=\sum_{n=0}^{\infty}(n+a)^{-s},\quad\Re(s)>1.
\end{align}
Its characteristic function is simply $F(z-a+1)$ where $F(z)=1/\Gamma(1-z)$, $z\in\bbC$ is the characteristic function of the Riemann $\z$-function considered earlier. Note that $a=1$ recovers, obviously, the Riemann $\zeta$-function example. 

In this case, $F(z-a+1)=1/\Gamma(a-z)$ is entire and its Taylor series about $z=0$ reads
\begin{align}
F(z-a+1)&=\frac{1}{\Gamma(a)}+\frac{\psi(a)}{\Gamma(a)} z +\frac{\psi^2(a)-\psi'(a)}{2\Gamma(a)} z^2 +\frac{\psi^3(a)-3\psi'(a)\psi(a)+\psi''(a)}{3!\Gamma(a)}z^3\nonumber\\
&\quad +\frac{\psi^4(a)-6\psi'(a)\psi^2(a) +4\psi''(a)\psi(a) +3[\psi'(a)]^2 -\psi^{(3)}(a)}{4!\Gamma(a)} z^4 + O(z^5).
\end{align}
One can then write down the positive integer values via Theorem \ref{t4}, which we omit here for brevity.

The theorems of Section \ref{s4} with $A=1$ and $B=a-1$ allow us to analyze the pole structure and the derivative at $s=0$ of $\z_{H}(s)$. First, since $s=1$ is a simple pole of the Riemann $\z$-function, by Corollary \ref{cor4.7}, $s=1$ is also a simple pole of $\z_{H}(s)$ with residue $1$.    
 
According to Theorem \ref{t4.2} and the expansion \eqref{4.6} we have that $\alpha=m=M=1$ and that the non-vanishing coefficients $\Omega_{j,k}$ are 
\begin{align}
    \Omega_{0,1}&=1,\quad\Omega_{1,1}=\frac{1}{2}-a,\quad \Omega_{0,0}=-(i\pi+1),\quad\Omega_{1,0}=-\frac{1}{2}\ln(2\pi)+i\pi\bigg(a-\frac{1}{2}\bigg),\nonumber\\
    \Omega_{j,0}&=\frac{(-1)^{j}(1-a)^{j-1}}{j(j-1)}\left(-\frac{j}{2}+1-a\right)+\sum_{n=1}^{\lfloor j/2\rfloor}\frac{B_{2n}}{2n(2n-1)}\binom{1-2n}{j-2n}(1-a)^{j-2n},\quad j\geq 2.
\end{align}
The coefficients $\Omega_{j,0}$ are indeed proportional to the Bernoulli polynomials. In fact, by using the expression for the Bernoulli polynomials in \cite[Eq.~24.2.5]{DLMF} with argument $1-a$ and the reflection formula \cite[Eq.~24.4.3]{DLMF} we find 
\begin{align}
    B_{j}(a)=(-1)^{j}\sum_{n=0}^{j}\binom{n}{k}B_{n}(1-a)^{j-n}.
\end{align}
Since $B_{0}=1$, $B_{1}=-1/2$ and $B_{2n+1}=0$ for $n\in\N$, we can separate even and odd terms in the previous expression to obtain 
\begin{align}\label{5.12}
   B_{j}(a)&=(-1)^{j}\left[(1-a)^{j}-\frac{j}{2}(1-a)^{j-1}\right]+(-1)^{j}\sum_{n=1}^{\lfloor j/2\rfloor}\binom{j}{2n}B_{2n}(1-a)^{j-2n}\nonumber\\
   &=(-1)^{j}(1-a)^{j}\left(-\frac{j}{2}+1-a\right)+(-1)^{j}\sum_{n=1}^{\lfloor j/2\rfloor}\binom{j}{2n}B_{2n}(1-a)^{j-2n}.
\end{align}
At this point we note that, since $1-2n$ is always a negative integer, we have
\begin{align}
    \binom{1-2n}{j-2n}=(-1)^{j}\frac{(j-2)!}{(j-2n)!(2n-2)!},
\end{align}
and therefore 
\begin{align}
    \frac{j(j-1)}{2n(2n-1)}\binom{1-2n}{j-2n}=\frac{(-1)^{j}j(j-1)(j-2)!}{2n(2n-1)(j-2n)!(2n-2)!}=\frac{(-1)^{j}j!}{(2n)!(j-2n)!}=(-1)^{j}\binom{j}{2n}.
\end{align}
By substituting this result into \eqref{5.12} and dividing the ensuing equation by $j(j-1)$ we arrive at the expression
\begin{align}
    \Omega_{j,0}=\frac{B_{j}(a)}{j(j-1)},\quad\textrm{for}\quad j\geq 2.
\end{align}

According to Theorem \ref{t4.3}, the Hurwitz $\z$-function has no poles other than the simple pole at $s=1$ mentioned earlier. The point $s=0$ is a regular point and by \eqref{4.22} we have for $a\in\bbC\backslash(-\bbN_0)$,
\begin{align}
    \z_{H}(0,a)=\Omega_{1,1}=\frac{1}{2}-a.
\end{align}

We can now exploit Theorem \ref{t4.3a} to compute the derivative of $\z_{H}(s,a)$ at $s=0$. In particular, \eqref{4.20a} leads to, with $a\in\bbC\backslash(-\bbN_0)$, 
\begin{align}\label{5.18}
    \z'_{H}(0,a)=-\frac{1}{2}\ln(2\pi)-\ln(1/\Gamma(a)),
\end{align}
where $\ln(1/\Gamma(a))$ is understood as the analytic continuation of $\ln(1/\Gamma(a-z))$ as $z \to 0$ inside a sector which includes the branch cut, $R_\Psi$, such that $\ln(1/\Gamma(a-z))$ is holomorphic (see Def. \ref{defzeta} and Rem. \ref{rem3.2} $(i)$).

Equation \eqref{5.18} can be made more explicit if we restrict our argument to $a\in\R\backslash(-\N_0)$ by following Remark \ref{Remark:3.8} $(ii)$ since in this case we can count the number $m$ of negative terms of the sequence. 
In particular, when $a<0$ one has $m=-\lfloor a\rfloor$. 
We can therefore conclude by \eqref{2.57} that 
\begin{equation}\label{5.21}
\zeta'_H(0,a)=\begin{cases}
-\frac{1}{2}\ln(2\pi)+\ln\, \Gamma(a),& \text{if $a>0$},\\
-i\pi \lfloor a\rfloor-\frac{1}{2}\ln(2\pi)+\ln\, |\Gamma(a)|,& \text{if $a<0$},
\end{cases}\quad a\in\bbR\backslash(-\bbN_0).
\end{equation}
Notice that for $a>0$, we easily recover, from \eqref{5.21}, the well-known result \cite[Eq.~25.11.18]{DLMF}. Equation \eqref{5.21} then extends \cite[Eq.~25.11.18]{DLMF} in $a$, which can also be verified via other formulas such as $\zeta_H(s,a)=\zeta_H(s,a+1)+a^{-s}$ \cite[Eq.~25.11.3]{DLMF}.

Lastly, by using Theorem \ref{t4.4} we can also obtain the values at $-n$, extending the classic formula \cite[Eq.~25.11.14]{DLMF}, valid for $\Re(a)>0$, to 
\begin{equation}
\zeta_H(-n,a)=-n\,\Omega_{1+n,0}=-\frac{B_{n+1}(a)}{n+1},\quad a\in\bbC\backslash(-\bbN_0),\, n\in\bbN.
\end{equation}

\subsection{Zeros of the Airy function} 
\label{Sect:Airy_Zeros}
In this example we consider the $\z$-function associated with the sequence of the negative of the infinitely many zeros $\{i_{n}\}_{n\in\N}$ of the Airy function $\Ai(z)$. The sequence $\{-i_{n}\}_{n\in\N}$ is then real and positive, $-i_{n}\to\infty$ as $n\to\infty$, and asymptotically one has $-i_{n}\sim n^{2/3}$ \cite[Sec.~9.9(iv)]{DLMF}. We denote its $\z$-function by
\begin{align}
    \z_{\Ai}(s)=\sum_{n=1}^{\infty}(-i_n)^{-s}, \quad \Re(s)>3/2.
\end{align}
The characteristic function associated with the above $\z$-function can be chosen to be $F(z)=\Ai(-z)$, $z\in\C$ as its zeros coincide with the sequence $\{-i_{n}\}_{n\in\N}$. The Airy function, $\Ai(-z)$, naturally appears when studying modified Fredholm determinants of the one-dimensional Schr\"odinger operator with linear potential on the half-line \cite{GK19a,Me16} (see also Sec.~\ref{s6} above regarding modified determinants). In addition, such a $\zeta$-function defined via Airy function zeros has also previously been studied, for instance, in \cite{Cr96} and \cite{voros23}. 

We now utilize the results of Theorem \ref{t4} to evaluate $\z_{\Ai}(n)$ with $n\in\N$. By using the relation $\Ai(-z)=(z^{1/2}/3)[J_{1/3}((2/3)z^{3/2})+J_{-1/3}((2/3)z^{3/2})]$ \cite[Eq.~9.6.6]{DLMF} and the small-$z$ expansion of the Bessel function \cite[Eq.~10.2.2]{DLMF} we obtain
\begin{align}\label{5.17a}
 F(z)=\sum_{k=0}^{\infty}\frac{(-1)^{k}3^{-2k-2/3}}{k!\Gamma\left(k+2/3\right)}z^{3k}+\sum_{k=0}^{\infty}\frac{(-1)^{k}3^{-2k-4/3}}{k!\Gamma(k+4/3)}z^{3k+1},   
\end{align}
from which one can easily extract the coefficients $c_j$ in Theorem \ref{t4}. The first few of them are
\begin{align}
    c_0=\frac{1}{3^{2/3} \Gamma \left(2/3\right)},\;c_{1}=\frac{1}{3^{1/3} \Gamma \left(1/3\right)},\;c_2=0,\;c_{3}=-\frac{1}{6 \left(3^{2/3} \Gamma \left(2/3\right)\right)},\;c_4=-\frac{1}{12 \left(3^{1/3} \Gamma \left(1/3\right)\right)}.
\end{align}
Since, in this case, $\alpha=2/3$, we can use \eqref{t2.25} to find, for example, the sums
\begin{align}
\begin{split}
    \zeta_{\Ai}(2)&=-2\frac{c_2}{c_0}+\left(\frac{c_1}{c_0}\right)^{2}=3^{2/3}\frac{\Gamma^{2}(2/3)}{\Gamma^{2}(1/3)},\quad
    \zeta_{\Ai}(3)=-3\frac{c_3}{c_0}+3\frac{c_{1}c_2}{c^2_0}-\left(\frac{c_1}{c_0}\right)^{2}=-3\frac{\Gamma^{3}(2/3)}{\Gamma^{3}(1/3)}+\frac{1}{2},\label{5.20}\\
    \zeta_{\Ai}(4)&=-4\frac{c_4}{c_0}-4\frac{ c_2 c_1^2}{c_0^3}+4\frac{c_3 c_1}{c_0^2}+2\left(\frac{ c_2}{c_0}\right)^{2}+\left(\frac{c_1}{c_0}\right)^4=3^{4/3}\frac{\Gamma^{4}(2/3)}{\Gamma^{4}(1/3)}-\frac{\Gamma(2/3)}{3^{2/3}\Gamma(1/3)},\\
    \zeta_{\Ai}(5)&=-3^{5/3} \frac{\Gamma^{5} \left(2/3\right)}{\Gamma^{5} \left(1/3\right)}+\frac{5}{4}\frac{\Gamma^2 \left(2/3\right)}{3^{1/3} \Gamma^2 \left(1/3\right)}.
\end{split}
\end{align}
Note that one can also easily study the $\zeta$-function associated with zeros of $\Ai'(-z)$ by taking the derivative of the series \eqref{5.17a}. In particular, this results in the new coefficients $\tilde{c}_m=(m+1)c_{m+1}$, yielding, for instance,
\begin{align}\label{5.20a}
    \zeta_{\Ai'}(2)&=-2\frac{3c_3}{c_1}+\left(\frac{2c_2}{c_1}\right)^{2}=\frac{\Gamma(1/3)}{3^{1/3}\Gamma(2/3)},\quad
    \zeta_{\Ai'}(3)=-3\frac{4c_4}{c_1}+3\frac{6c_{2}c_3}{c^2_1}-\left(\frac{2c_2}{c_1}\right)^{2}=1.
\end{align}
Equations \eqref{5.20} and \eqref{5.20a} recover the results found in \cite{voros23} including the curious fact that $\zeta_{\Ai'}(3)=1$.

To perform the analytic continuation of $\zeta_{\Ai}(s)$ to the left of $\Re(s)=2/3$, and utilize the result of Theorems \ref{t2.11} and \ref{t3.5}, we need the large-$z$ asymptotic expansion of $\Ai(-z)$. To this end, it is convenient to express the Airy function in terms of Hankel functions as follows \cite[Eq.~9.6.6]{DLMF}
\begin{align}
 \Ai(-z)=\frac{1}{2}\sqrt{\frac{z}{3}}\left(e^{-i\pi/6}H_{1/3}^{(2)}\left(\frac{2}{3}z^{3/2}\right)+e^{i\pi/6}H_{1/3}^{(1)}\left(\frac{2}{3}z^{3/2}\right)\right).   
\end{align}
According to \cite[Eqs. 10.2.5 and 10.2.6]{DLMF} the Hankel function $H_{1/3}^{(2)}((2/3)z^{3/2})$ is dominant compared to $H_{1/3}^{(1)}((2/3)z^{3/2})$ for $z\to\infty$ when $\Im(z)>0$. This implies that for large values of $z$ with $\Im(z^{3/2})$ we have, once the substitution $z=te^{i\Psi}$ has been performed,
\begin{align}\label{4.15}
    \ln\,F\left(te^{i\Psi}\right)&=-i\frac{2}{3}\left(te^{i\Psi}\right)^{3/2}-\frac{1}{4}\ln\left(te^{i\Psi}\right)+\frac{i\pi}{4}-\ln(2\sqrt{\pi})\nonumber\\
    &\quad+\ln\Bigg[1+\sum_{k=1   }^{N}\left(\frac{-3i}{2}\right)^{k}\frac{\Gamma(k+(1/6))\Gamma(k+(5/6))}{2\pi(-2)^{k}k!}\left(te^{i\Psi}\right)^{-3k/2}\Bigg]+O(t^{-3(N+1)/2}).    
\end{align}
By further expanding the logarithmic term in \eqref{4.15}
we obtain
\begin{align}\label{4.17a}
  \ln\,F\left(te^{i\Psi}\right)&=-i\frac{2}{3}\left(te^{i\Psi}\right)^{3/2}-\frac{1}{4}\ln\left(te^{i\Psi}\right)+\frac{i\pi}{4}-\ln(2\sqrt{\pi})+\sum_{j=1}^{N}p_{j}\left(te^{i\Psi}\right)^{-3j/2}+O(t^{-3(N+1)/2}).  
\end{align}
where the coefficients $p_{j}$ 
can be found by employing the fact
\begin{align}\lb{4.16}
    \ln\left(1+\sum_{m=1}^\infty C_m y^m\right) = \sum_{m=1}^\infty D_m y^m, \quad 0\leq |y|\text{ sufficiently small},
\end{align}
where
\begin{equation}\lb{4.17}
D_1=C_1, \quad D_j=C_j-\sum_{\ell=1}^{j-1} (\ell /j) C_{j-\ell}D_{\ell},\quad j \in \bbN, \; j\geq 2.
\end{equation}
Few of the fist coefficients read
\begin{align}
    p_1=\frac{5i}{48},\quad p_2=-\frac{5}{64},\quad p_3=-\frac{1105 i}{9216},\quad p_4=\frac{565}{2048},\quad p_5=\frac{82825 i}{98304},\quad p_6=-\frac{19675}{6144},
\end{align}
with higher order ones obtainable with the help of a simple computer program. 

The asymptotic expansion \eqref{4.17a} is of the form specified in Assumption \ref{assump2.9} with $M=1$, $\alpha=3/2$, and $m=2$. This means that we can use Theorem \ref{t2.11} to find all the poles of $\z_{\Ai}(s)$. In more detail, from the asymptotic expansion \eqref{4.17a} we find that the only non-vanishing coefficients $d_{j,k}$ are
\begin{align}\label{5.29}
    d_{0,0}=-\frac{2i}{3},\quad d_{3,0}=\frac{i\pi}{4}-\ln(2\sqrt{\pi}),\quad d_{3n+3,0}=p_{n}\, \quad n\in\N ,\quad \textrm{and}\quad d_{3,1}=-\frac{1}{4}.
\end{align}    
This implies that the points $s=3/2$ and $s=-(3/2)-3n$, $n\in\N_0$ are simple poles with residue (see Theorem \ref{t2.12})
\begin{align}\label{5.30}
 \Res[\zeta_{\Ai}(s);s=3/2]=\frac{1}{\pi}, \quad  \Res[\zeta_{\Ai}(s); s=-3/2-3n]=\frac{p_{2n+1}}{i\pi}\left(\frac{3}{2}+3n\right),\quad n\in\N_0.
\end{align}
Furthermore, $s=0$ is a regular point of $\z_{\Ai}(s)$ and the second formula in \eqref{2.56} can be used to obtain 
\begin{align}
\zeta_{\Ai}(0)=d_{3,1}=-\frac{1}{4}.    
\end{align}
Theorem \ref{t3.5} can now be applied to find $\z_{\Ai}'(0)$. In fact, by using $d_{3,1}$ and $d_{3,0}$ from \eqref{5.29} in equation \eqref{2.57}, with $m=0$, yields
\begin{align}\label{5.32}
\z_{\Ai}'(0)=-\ln(2\sqrt{\pi})-\ln|\Ai(0)|=\ln\left(\frac{3^{2/3}\Gamma(2/3)}{2\sqrt{\pi}}\right).\end{align}

Since the only poles occur at $s=3/2$ and $s=-(3/2)-3n$, $n\in\N_0$, the integers $s=-n$ with $n\geq -1$ are regular points  
of $\z_{\Ai}(s)$. This means that we can use Theorem \ref{t3.7} to compute the values of $\z_{\Ai}(s)$ at those points (except $s=0$). Since for each $n\in\Z\backslash\{0\}$ such that $n\geq -1$ we can find a $j$ such that $3-j=-2n$, we have from \eqref{3.26}, that 
\begin{align}\label{5.33}
   \z_{\Ai}(1)=d_{1,0}-\frac{c_1}{c_0}=-\frac{c_1}{c_0}=-3^{1/3}\frac{\Gamma(2/3)}{\Gamma(1/3)},
\end{align}
so that we find the \textit{universal exact sum rule} for $n\in\bbN$ (with $c_n$ given in \eqref{5.17a})
\begin{equation}
    \zeta_{\Ai}(n) = (-1)^n n\bigg(\frac{1}{n!}-3^{(2-n)/3}\frac{\Gamma \left(1/3\right)^n c_n}{\Gamma \left(2/3\right)^{n-1}}\bigg)\zeta_{\Ai}^n(1) + \sum_{\underset{n>k \geq 2}{j_1 + \, \dots \, + j_k = n}} \frac{(-1)^kn}{k!j_1 \cdots j_k} \zeta_{\Ai}(j_1) \cdots \zeta_{\Ai}(j_k).
\end{equation}
Interestingly, since $c_n=0$ for $n\equiv2\pmod 3$, we have
\begin{equation}
    \zeta_{\Ai}(n) = \frac{(-1)^n}{(n-1)!}\zeta_{\Ai}^n(1) + \sum_{\underset{n>k \geq 2}{j_1 + \, \dots \, + j_k = n}} \frac{(-1)^kn}{k!j_1 \cdots j_k} \zeta_{\Ai}(j_1) \cdots \zeta_{\Ai}(j_k),\quad n\equiv2\pmod3.
\end{equation}

For the negative integers we have, instead,
\begin{align}
  \z_{\Ai}(-n)=-n\,d_{2n+3,0},\quad n\in\N.   
\end{align}
Now, the only relevant non-vanishing coefficients of the asymptotic expansion \eqref{4.17a} are $d_{3k+3,0}$, $k\in\N_0$. This implies that whenever $2n=3k$ with $(n,k)\in\N$ the value of $\z_{\Ai}$ is non-zero. This occurs for $n=3j$ with $j\in\N$ and it leads to the result
\begin{align}
   \z_{\Ai}(-3j)=-3j\,d_{6j+3,0}=-3j\,p_{2j},\quad j\in\N.  
\end{align}
For all other negative integer values $\z_{\Ai}$ vanishes identically, that is
\begin{align}\label{5.33a}
 \z_{\Ai}(-n)=-n\,d_{2n+3,0}=0, \quad n\in\N\backslash\{3j\}, \quad j\in\N,  
 \end{align}
which represent the trivial zeros of the $\z$-function associated with the sequence of the negative of the zeros of the Airy function.

\begin{remark}
$(i)$ We would like to point out that rather than using the recursion \eqref{4.16} to find the coefficients $b_{2n+1}$ in the residues \eqref{5.30}, one could instead use the closed-form asymptotics proven in \cite{KM09} to write the residue as an exact integral representation for any $n$.\\[1mm]
$(ii)$ The paper \cite{Cr96} analyzes some particular values of the spectral $\z$-function associated with the Schrodinger equation endowed with a potential of the form $V(x)=|x|^{\nu}$. By assuming that the ensuing spectral $\z$-function is analytic with respect to the parameter $\nu$ the author conjectured that the analytically continued value of the Airy zeta function at $s=1$ was $\z_{\Ai}(1)=-3^{-2/3}\Gamma(2/3)/\Gamma(4/3)$. According to our expression in \eqref{5.33} the analytically continued result does in fact agree with this value (after the substitution $\Gamma(4/3)=(1/3)\Gamma(1/3)$).

We would like to add that the Airy zeta function was also studied in the work \cite{voros,voros23}. There, the author provided explicit values for $\z'_{\Ai}(0)$ and $\z_{\Ai}(n)$ for $n=1,2,3$, all of which agree with our results here (and those of \cite[Eq.~(4.23)]{Cr96}).
Furthermore, it is mentioned in \cite{voros23} that the value $\z_{\Ai}(1)$ provided there is meant as a `regularized sum' (also called regularized value or finite part), though it agrees with the value obtained here after analytic continuation.

It is interesting to point out that if we naively used Theorem \ref{t4} to evaluate $\zeta_{\Ai'}(s)$ at $s=1$ (which is a point outside of the validity of the theorem) we would obtain the value $0$. This
agrees with the regularized sum value provided in \cite[Eq.~(19)]{voros23}, indicating that the large modulus asymptotics of the integral representation \eqref{2.28a} once again does not contribute to this value (c.f. Remark \ref{remposint} $(i)$).\\[1mm]
$(iii)$ 
Finally, note that by \eqref{5.17a} one has for $k\geq2$,
\begin{equation}
\frac{c_k}{c_0}=\begin{cases}
\frac{(-1)^{k/3}}{(k/3)!3^{2k/3}[(k/3)-1+(2/3)][(k/3)-2+(2/3)]\dots(2/3)},& k\equiv 0\pmod 3,\\
\frac{(-1)^{(k-1)/3-1}}{((k-1)/3)!3^{2(k-1)/3}[((k-1)/3)-1+(4/3)][((k-1)/3)-2+(4/3)]\dots(4/3)}X,& k\equiv 1\pmod 3,\\
0,& k\equiv 2\pmod 3,
\end{cases}
\end{equation}
where
\begin{equation}
X=\zeta_{\Ai}(1)=-\frac{c_1}{c_0}=-3^{1/3}\frac{\Gamma(2/3)}{\Gamma(1/3)}.
\end{equation}
Therefore equations \eqref{2.34aa} and \eqref{t2.28} in Theorem \ref{t4} show that $\zeta_{\Ai}(n)$ is a polynomial in the variable $X=-3^{1/3}\Gamma(2/3)/\Gamma(1/3)$ with rational coefficients for all $n\in\bbN$. This shows that the conjecture put forth in \cite[Sec. 4]{Cr96} regarding the polynomial form of $\zeta_{\Ai}(n)$ for $n\geq  2$ is indeed correct, and in fact extends to $n=1$ by \eqref{5.33}. 

We further point out that this circle of ideas is quite general. In particular, due to the recursion \eqref{t2.28}, if $c_1\neq0$ and $c_k/c_0$ can be written as a polynomial in $X=-c_1/c_0$ with rational coefficients for each $k\in\mathbb{N}$, then $\zeta_S(n)$ can be written as a polynomial in the variable $X=-c_1/c_0$ with rational coefficients for all $n>\a$. We have chosen the negative sign in defining $X$ so that it agrees with $\zeta_S(1)$ whenever it is given by \eqref{2.34aa}, such as when $\a<1$ or as in the Airy example.
\end{remark}

\subsection{Zeros of the parabolic cylinder function}\label{Sect:Zeros_Para}
For $a>-1/2$, the parabolic cylinder function $U(a,z)$, defined in Section 12.5(i) of \cite{DLMF}, has no real zeros but has infinitely many zeros $\{q_n\}_{n\in\N}$ consisting of the union of $\{j_n\}_{n\in\N}$ and $\{\bar{j}_n\}_{n\in\N}$ which are complex conjugate of each other. As $n\to\infty$, $\Arg(j_n)\to3\pi/4$ while, obviously, $\Arg(\bar{j}_n)\to-3\pi/4$ \cite[Sec.~12.11]{DLMF}. Asymptotically, the zeros behave as $|q_{n}|\sim \sqrt{n}$ \cite[Sec.~12.11(ii)]{DLMF}, and hence, we can consider the $\z$-function associated with the sequence of zeros $\{q_n\}_{n\in\N}$ 
\begin{align}
    \z_U(s)=\sum_{n=1}^{\infty}q_n^{-s},\quad \Re(s)>2,
\end{align}
which we will show can be  analytically continued to all of $\mathbb{C}$.

Since for $a>-1/2$, $U(a,z)$ has no real zeros, we will choose the branch cut to be the positive real axis, $R_0$. The characteristic function associated with $\z_U(s)$ is $F(z)=U(a,z)$, $z\in\C$. In order to evaluate $\z_U(n)$, $n\in\N$ with $n\geq 3$, we need, according to Theorem \ref{t4}, the small-$z$ expansion of the characteristic function. To this end, we consider the following integral representation valid for $z\in\C$ and $a>-1/2$ \cite[Eq.~12.5.1]{DLMF}
\begin{align}
 U(a,z)=\frac{e^{-z^{2}/4}}{\Gamma(a+(1/2))}\int_{0}^{\infty}dt\,t^{a-1/2}e^{-t^2/2}e^{-zt}.   
\end{align}
By using the Taylor expansion for $e^{-zt}$ we find
\begin{align}
  U(a,z)= \frac{e^{-z^{2}/4}}{\Gamma(a+(1/2))}\sum_{n=0}^{\infty}\frac{(-1)^{n}}{n!}2^{\frac{2n+2a-3}{4}}\Gamma\left(\frac{2n+2a+1}{4}\right)z^{n}.
\end{align}
By further expanding the prefactor $e^{-z^{2}/4}$ and by subsequently using the Cauchy product we obtain
\begin{align}
    F(z)=U(a,z)=\sum_{j=0}^{\infty}c_{j}z^{j},
\end{align}
where for $j\in\N_0$ 
\begin{align}\label{4.28}
    c_{2j}&=\frac{2^{(2a-3)/4}}{\Gamma(a+(1/2))}\sum_{l=0}^{j}\frac{(-1)^{j-l}2^{l}}{4^{j-l}(2l)!(j-l)!}\Gamma\left(l+\frac{2a+1}{4}\right),\nonumber\\
    c_{2j+1}&=-\frac{2^{(2a-3)/4}}{\Gamma(a+(1/2))}\sum_{l=0}^{j}\frac{(-1)^{j-l}2^{l+1/2}}{4^{j-l}(2l+1)!(j-l)!}\Gamma\left(l+\frac{2a+3}{4}\right).
\end{align}
We can now use Theorem \ref{t4} with $\alpha=2$ and the coefficients \eqref{4.28} to find, for instance,
\begin{align}
    \zeta_{U}(3)&=2 \sqrt{2} \frac{\Gamma^3 ((2a+3)/4)}{\Gamma^3((2a+1)/4))}-\sqrt{2} a\frac{\Gamma((2a+3)/4)}{\Gamma((2a+1)/4))},\nonumber\\
    \zeta_{U}(4)&=4\frac{ \Gamma^4((2a+3)/4)}{\Gamma^{4} ((2a+1)/4))}-\frac{8a}{3}\frac{\Gamma^{2} ((2a+3)/4)}{\Gamma^{2} ((2a+1)/4))}+\frac{1}{12} \left(4 a^2-1\right),\nonumber\\
    \zeta_{U}(5)&=4\sqrt{2}\frac{\Gamma^5 ((2a+3)/4)}{\Gamma^5((2a+1)/4))}-\frac{10\sqrt{2}a}{3}\frac{\Gamma^3 ((2a+3)/4)}{\Gamma^3((2a+1)/4))}+\frac{\sqrt{2}}{24}\left(16 a^2-1\right)\frac{\Gamma((2a+3)/4)}{\Gamma((2a+1)/4))}.
\end{align}

Next, we analyze the structure of the poles of $\z_U(s)$. In the process of analytic continuation detailed in the proof of Theorem \ref{t2.11}, we deform the contour of integration to one that surrounds the branch cut $R_{\Psi}$. In this case, the deformed contour hugs the positive real line (our choice of branch cut) and, hence, the appropriate large-$z$ asymptotic expansion of $U(a,z)$ to utilize for the process of analytic continuation must be valid in a region containing the positive real line. This observation leads us to consider the following large-$z$ expansion of $U(a,z)$, and hence $F(z)$, valid for $a\in\C$ and $|\Arg(z)|<3\pi/4$ \cite[Eq.~12.9.1]{DLMF}   
\begin{align}
    F(z)=U(a,z)=\frac{e^{-z^{2}/4}}{z^{a+(1/2)}}\left(\sum_{n=0}^{N}\frac{(-1)^{n}}{2^{n}n!}\frac{\Gamma(2n+a+(1/2))}{\Gamma(a+(1/2))}z^{-2n}+O\left(z^{-2N-2}\right)\right).
\end{align}
From here, the expansion of $\ln\,F(t)$ (recall that $\Psi=0$) can be easily obtained by utilizing the relations \eqref{4.16} and \eqref{4.17}, that is
\begin{align}\label{4.31}
\ln\,F(t)=-\frac{1}{4}t^{2}-\left(a+\frac{1}{2}\right)\ln\,t+\sum_{j=1}^{N}h_jt^{-2j}+O\left(t^{-2N-2}\right),   
\end{align}
where the first few lower order coefficients $h_j$ are
\begin{align}
    h_{1}&=-\frac{1}{8} (2 a+1) (2 a+3),\quad h_{2}=\frac{1}{8} (2 + a) (1 + 2 a) (3 + 2 a),\nonumber\\
    h_3&=-\frac{1}{96} (2 a+1) (2 a+3) \left(20 a^2+88 a+99\right),\nonumber\\
    h_4&=\frac{1}{64} (2 a+1) (2 a+3) \left(28 a^3+200 a^2+489 a+408\right),\nonumber\\
    h_5&=-\frac{1}{320} (2 a+1) (2 a+3) \left(336 a^4+3424 a^3+13480 a^2+24232 a+16713\right),\nonumber\\
    h_6&=\frac{1}{192} (2 a+1) (2 a+3) \left(528 a^5+7136 a^4+39848 a^3+114632 a^2+169245 a+102096\right).
\end{align}

Also in this example, the form of the asymptotic expansion \eqref{4.31} satisfies Assumption \ref{assump2.9} with $M=1$, $\alpha=2$, and $m=1$. This implies that we can use Theorem \ref{t2.11} to extract the pole structure of $\z_U(s)$. 
By comparing \eqref{4.31} with the general form \eqref{2.42} we find that the only non-vanishing coefficients $d_{j,k}$ are
\begin{align}
    d_{0,0}=-\frac{1}{4},\quad d_{2n+2,0}=h_n,\quad n\in\N,\quad \textrm{and}\quad d_{2,1}=-a-\frac{1}{2}.
\end{align}
According to Theorem \ref{t2.11} the points $2-j\in\Z\backslash\{0\}$ are all regular which implies that $\z_U(s)$ develops no poles in $\C$. In particular, since $M=1$, the $\z$-function is regular at $s=0$ having value (c.f. \eqref{2.56}) 
\begin{align}
\z_U(0)=-a-\frac{1}{2}.   
\end{align}

Lastly, we use Theorem \ref{t3.5} to compute 
\begin{align}
    \z_{U}'(0)=\left(-a-\frac{1}{2}\right)i\pi-\ln\,U(a,0)=\left(-a-\frac{1}{2}\right)i\pi-\ln\left(\frac{\sqrt{\pi }\, 2^{-(2a+1)/4}}{\Gamma ((2a+3)/4)}\right),
\end{align}
where we have used the fact that $U(a,0)=c_0$ in \eqref{4.28} and the logarithm is understood as its analytic continuation as $z \to 0$ inside a sector which includes the positive real line such that $\ln\, U(a,z)$ is holomorphic (see Def. \ref{defzeta} and Rem. \ref{rem3.2} $(i)$).

Theorem \ref{t3.7} allows us to compute the values of $\z_{U}(n)$, $n\in\Z\backslash\{0\}$ with $n\leq 2$. More explicitly, from \eqref{3.26} and the asymptotic expansion \eqref{4.31} we find
\begin{align}
\z_{U}(1)&=d_{1,0}-\frac{c_1}{c_0}=2^{1/2}\frac{\Gamma((2a+3)/4)}{\Gamma((2a+1)/4)},\label{5.56aa}\\
\z_U(2)&=2d_{0,0}-2\frac{c_2}{c_0}+\left(\frac{c_1}{c_0}\right)^{2}=-a-\frac{1}{2}+2\frac{\Gamma^2 ((2a+3)/4)}{\Gamma^2((2a+1)/4))}.\label{5.57aa}
\end{align}
For $k\in\N$ we have instead
\begin{equation}
    \z_{U}(-2k)=-2k \,h_{k},\quad \z_U(-2k+1)=0.
\end{equation}
In this case we find that all of the negative odd integers are trivial zeros of $\z_U(s)$.

As a final comment, we would like to point out that this example exhibits a similar behavior that occurred in the case of the Airy $\z$-function. In fact, by setting  $X=2^{1/2}\Gamma((2a+3)/4)/\Gamma((2a+1)/4),$ $a>-1/2$, $\zeta_U(n)$ can be expressed as a polynomial in $X$ for all $n\in\bbN$ but with coefficients that are not rational in general except for special values of $a$. However, we should mention that for $n=1$ the analytic continuation agrees with \eqref{2.34aa} whereas it does not for $n=2$ (cf. \eqref{5.56aa} and \eqref{5.57aa}, resp.).

\subsection{Zeros of the confluent hypergeometric function}\label{Sect:Zeros_Conf}
As a final example, we consider the $\z$-function associated with the zeros of the confluent hypergeometric function, which we will show also admits an analytic continuation to all of $\mathbb{C}$. For 
$a$, $b$, and $b-a\not\in\Z_{\leq0}$, the confluent hypergeometric function $M(a,b,z)$, defined in \cite[Eq.~13.2.2]{DLMF}, has infinitely many zeros in $\C$ with respect to the argument $z$. We denote the sequence of zeros by $\{r_{n}\}_{n\in\N}$ ordered by increasing modulus. According to \cite[Sec.~13.9.9]{DLMF}, for fixed $a,b\in\C$, the zeros of $M(a,b,z)$ have the asymptotic behavior $|r_n|\sim \,n$ as $n\to\infty$ and, therefore, we can consider the associated $\z$-function
\begin{align}
    \z_M(s)=\sum_{n=1}^{\infty}r^{-s}_n,\quad \Re(s)>1.
\end{align}
In this case, we choose the branch cut $R_{\Psi}$ that avoids all the zeros of $M(a,b,z)$ and such that $-\pi/2<\Psi<\pi/2$. The characteristic function of $\z_M(s)$ is naturally given by $F(z)=M(a,b,z)$ with $a,b,b-a\notin\Z_{\leq0}$. 

The definition of $M(a,b,z)$ in \cite[Eq.~13.2.2]{DLMF} immediately provides the small-$z$ expansion of $F(z)$. Explicitly, we have
\begin{align}\label{4.41}
    F(z)=\sum_{n=0}^{\infty}c_{n}z^{n},\quad\textrm{with}\quad c_{n}=\frac{\Gamma(a+n)\Gamma(b)}{n!\Gamma(a)\Gamma(b+n)}.
\end{align}
According to Theorem \ref{t4}, with $\alpha=1$, it is, then, not difficult to obtain
\begin{align}
    \z_{M}(2)&=\frac{a(a-b)}{b^{2}(b+1)},\quad
    \z_M(3)=\frac{a (a-b) (b-2a)}{b^3 (b+1) (b+2)},\nonumber\\
    \z_M(4)&=\frac{a (a-b) \left(a^2 (5 b+6)-a b (5 b+6)+b^2 (b+1)\right)}{b^4 (b+1)^2 (b+2) (b+3)},\nonumber\\
    \z_M(5)&=\frac{a (a-b) (b-2a) \left(a^2 (7 b+12)-a b (7 b+12)+b^2 (b+1)\right)}{b^5 (b+1)^2 (b+2) (b+3) (b+4)}.
\end{align}
We would like to point out that these values confirm the ones obtained by Buchholz in \cite{Buch69}. 

In order to analyze $\z_{M}(s)$ for $\Re(s)\leq 1$, we consider the large-$z$ asymptotic expansion of $M(a,b,z)$ valid in the region of interest, $|\Arg(z)|<\pi/2$. This expansion can be obtained from the one for ${\bf M}(a,b,z)$ provided in \cite[Eq.~13.7.2]{DLMF} by noting that $M(a,b,z)=\Gamma(b){\bf M}(a,b,z)$. By performing the substitution $z=te^{i\Psi}$ we get
\begin{align}
    \ln\,F\left(te^{i\Psi}\right)&=te^{i\Psi}+(a-b)\ln\left(te^{i\Psi}\right)+\ln\left(\Gamma(b)/\Gamma(a)\right)\nonumber\\
    &\quad+\ln\left[1+\sum_{n=1  }^{N}\frac{\Gamma(1-a+n)\Gamma(b-a+n)}{\Gamma(1-a)\Gamma(b-a)n!}\left(te^{i\Psi}\right)^{-n}\right]+O\left(t^{-N-1}\right).
\end{align}
By using \eqref{4.16} and \eqref{4.17} we find
\begin{align}\label{4.44}
  \ln\,F\left(te^{i\Psi}\right)= te^{i\Psi}+(a-b)\ln\left(te^{i\Psi}\right)+\ln\left(\Gamma(b)/\Gamma(a)\right)+\sum_{j=1}^{N}f_j\left(te^{i\Psi}\right)^{-j}+O\left(t^{-N-1}\right),
\end{align}
where the first few coefficients $f_j$ are
\begin{align}
    f_1&=(a-1) (a-b),\quad f_2=-\frac{1}{2} (a-1) (a-b) (2 a-b-2),\nonumber\\
    f_3&=\frac{1}{3} (a-1) (a-b) \left(5 a^2-a (5 b+11)+b (b+6)+6\right),\nonumber\\
    f_4&=-\frac{1}{4} (a-1) (a-b) \left(14 a^3-a^2 (21 b+50)+a ( 9 b^{2}+53b+60)-b (b^{2}+12b+34)-24\right).
\end{align}

Also in this last example, the large-$z$ asymptotic expansion \eqref{4.44} is of the same form as \eqref{2.42} which implies that the theorems of the previous section can be applied. By comparing \eqref{4.44} and \eqref{2.42} we find that, in this case, $\alpha=M=m=1$ and the only non-vanishing coefficients $d_{j,k}$ are
\begin{align}
    d_{0,0}=1,\quad d_{1,1}=a-b,\quad d_{1,0}=\ln\left(\Gamma(b)/\Gamma(a)\right),\quad d_{j+1,0}=f_j,\quad j\in\N.
\end{align}
According to Theorem \ref{t2.11} the points $1-j\in\Z\backslash\{0\}$ are all regular, and therefore $\z_M(s)$ has no poles in $\C$. Moreover, from \eqref{2.56} we find
\begin{align}
    \z_M(0)=a-b.
\end{align}
From Theorem \ref{t3.5} we can compute the derivative of the $\z$-function at $s=0$,
\begin{align}
    \z_M'(0)=(a-b)i\pi+\ln\left(\Gamma(b)/\Gamma(a)\right)+\ln\, M(a,b,0),
\end{align}
where $\ln\, M(a,b,0)$ is understood as its analytic continuation as $z \to 0$ inside a sector including the branch cut, $R_{\Psi}$, such that $\ln\, M(a,b,z)$ is holomorphic (which will be equal to $2\pi i k$ for some $k\in\bbZ$ by \eqref{4.41}).

Lastly, we can compute the value of $\z_M(s)$ at $n\in\Z\backslash\{0\}$ such that $n\leq 1$ via Theorem \ref{t3.7}:
\begin{align}
    \z_M(1)=d_{0,0}-\frac{c_1}{c_0}=1-\frac{a}{b},\quad \z_M(-j)=-jf_j,\quad j\in\N. 
\end{align}

\subsection{Numerical verification using the AAA algorithm}
The adaptive Antoulas--Anderson (AAA) algorithm for rational interpolation introduced in \cite{AAA} has quickly established itself as a stable and efficient way to numerically compute analytic continuations in various settings. In particular, this method computes, in a fraction of a second, the analytic continuation of the Riemann $\z$-function in the critical strip and even for $\Re (s)< 0$ close to the origin, see \cite{NST23}, \cite{Trefethen23}. In the following we will use the AAA algorithm to numerically verify the special values of $\zeta_{\Ai}$ and $\zeta_{\Ai}'$ established in Section \ref{Sect:Airy_Zeros}. We demonstrate very close agreement with the theoretically computed values. The numerical analytic continuation of the other $\z$-functions considered in Sections \ref{Sect:Zeros_Para}, \ref{Sect:Zeros_Conf} should also be feasible, though more effort is needed due to the presence of zeros in the complex plane requiring more sophisticated root-finding algorithms. 

The AAA algorithm is implemented via the command \texttt{aaa} as part of the Chebfun package. The default relative interpolation accuracy is set to $10^{-13}$, hence we compute $\zeta_{\Ai}$  numerically up to this error at the interpolation points.  Note that while the series converges for $\Re (s) > 3/2$, the convergence rate is faster for larger $\Re (s)$. Therefore, we choose for the interpolation points $100$ equidistant points in the interval $[2, 8]$, requiring just $10^5$ terms to be included. Of these, we compute $10^3$ numerically, and for the remaining zeros we use the asymptotic formula \cite[Eq.~9.9.18]{DLMF}, including the subleading term. We also estimate the tail of the infinite series using the Euler--Maclaurin formula including the second correction. This allows us to compute $\zeta_{\Ai}(2)$ up to an error of size $\approx 10^{-13}$ as can be verified using the exact value from \eqref{5.20}.

The computation of the rational approximation takes a few second on a standard laptop.
In particular, the values of $\zeta_{\Ai}(1)$ and $\zeta_{\Ai}(0) = -1/4$ are computed up to $7$, resp.~$5$ correct digits. Furthermore, using a simple difference quotient with stepsize $10^{-6}$, we can compute $\zeta'_{\Ai}(0)$ up to for $4$ correct digits. The rational approximation also has an isolated zero around $\approx -0.992$ which is very close to the theoretical value $s = -1$ and even develops a pole at around $\approx -1.42$ which is in the vicinity of the theoretical value $s = -1.5$.  

Finally, we point out that such an algorithm can be used to numerically approximate the value of the corresponding $\zeta$-function at $s=-1/2$, a value important in the study of the Casimir energy as mentioned in the introduction. Our approximation gives us the value $\zeta_{\Ai}(-1/2) \approx -0.1393
$.

\end{document}